\documentclass[11pt]{article}

\usepackage{amsmath,amssymb,amsthm}
\usepackage{comment}
\usepackage[unicode,breaklinks=true,colorlinks=true]{hyperref}
\usepackage[dvipsnames]{xcolor}


\usepackage[top=1in, bottom=1in, left=1in, right=1in, marginparwidth=1in, marginparsep=0.1in]{geometry}

\numberwithin{equation}{section}
\newtheorem{theorem}{Theorem}[section]
\newtheorem{proposition}[theorem]{Proposition}
\newtheorem{lemma}[theorem]{Lemma}
\newtheorem{definition}[theorem]{Definition} 
\newtheorem{corollary}[theorem]{Corollary}

\theoremstyle{remark}
\newtheorem{remark}[theorem]{Remark}

\newcommand{\SVERAK}{\v Sver\'ak}

\definecolor{darkblue}{rgb}{0,0,0.7}



\newcommand{\al}{\alpha}

\newcommand{\de}{\delta}
\newcommand{\e}{\epsilon}
\newcommand{\ga}{{\gamma}}

\newcommand{\la}{\lambda}

\newcommand{\Om}{{\Omega}}

\newcommand{\td}{\tilde}

\newcommand{\De}{\Delta}

\newcommand{\R}{{\mathbb R }}
\newcommand{\N}{{\mathbb N}}

\newcommand{\pd}{{\partial}}
\newcommand{\nb}{{\nabla}}

\newcommand{\I}{\infty}

\renewcommand{\div}{\mathop{\mathrm{div}}}

\newcommand{\supp}{\mathop{\mathrm{supp}}}

\newcommand{\donothing}[1]{{}}

\newcommand{\EQ}[1]{\begin{equation}\begin{split} #1 \end{split}\end{equation}}
\newcommand{\EQN}[1]{\begin{equation*}\begin{split} #1 \end{split}\end{equation*}}

\makeatletter
\newcommand{\xRightarrow}[2][]{\ext@arrow 0359\Rightarrowfill@{#1}{#2}}
\makeatother

\newcommand{\loc}{\mathrm{loc}} 
\newcommand{\uloc}{\mathrm{uloc}}

\let\OLDthebibliography\thebibliography
\renewcommand\thebibliography[1]{
	\OLDthebibliography{#1}
	\setlength{\parskip}{1pt}
	\setlength{\itemsep}{1pt plus 0.3ex}
}


\begin{document}
	
	\title
	{	Time asymptotics, time regularity and separation rates for Navier-Stokes flows in supercritical solution classes
	}
	\author{Zachary Bradshaw\footnote{ Department of Mathematical Sciences, 
			University of Arkansas,
			Fayetteville, AR. \url{zb002@uark.edu}.
			 The research of Z.~Bradshaw was supported in part by the  NSF via grant DMS-2307097.} ~\& Joshua Hudson\footnote{Department of Mathematical Sciences,  
			 University of Arkansas,
			 Fayetteville, AR. \url{jh195@uark.edu}.}}
	\date{\today}

	 \maketitle
	 
	 	\begin{abstract}
	 	This paper extends the weak solution theory for the 3D Navier-Stokes equations of Barker, Seregin and \SVERAK~from a critical setting to a supercritical setting making sure to include a useful \textit{a priori} energy bound as well as a statement about stability under weak-star convergence. Two applications of the \textit{a priori} bound are then explored. The first provides a spatially local, short-time asymptotic expansion in the time variable starting at $t=0$ which, as a corollary, provides an upper bound on how fast hypothetical non-unique solutions to the Navier-Stokes equations can separate locally. The second establishes higher-order time regularity at a singular time and at spatial points positioned away from the singularity. This quantifies the degree to which the non-local nature of the pressure allows a far flung singularity to disrupt the time regularity at a regular point.
	 \end{abstract}
	 
	 \section{Introduction}
	 
	 The Navier-Stokes equations are a  system of PDEs which models the motion of a viscous, incompressible fluid.
	 The Navier-Stokes equations  for $n\geq 2$ are
	 \begin{equation}\label{eq.ns}\tag{NS}
	 	\partial_t u - \nu \Delta u +u\cdot \nb u +\nb p= f;\qquad \nb \cdot u =0,
	 \end{equation}
	 where the velocity vector $u$  and scalar pressure $p$ are unknowns and the viscosity coefficient $\nu$ and force $f$ are given. The system is augmented with an initial condition $u_0$ in an appropriate function space. In this paper, unless otherwise specified, $f=0$, $\nu=1$ and the domain is $\R^3\times (0,\I)$.
	 
	 A foundational mathematical treatment of \eqref{eq.ns} was provided by Leray in \cite{leray} where global weak solutions were constructed for finite energy data. Hopf later made contributions for bounded domains \cite{hopf}. Solutions resembling those constructed by Leray and Hopf are referred to as Leray solutions or Leray-Hopf solutions---we will refer to them as ``Leray solutions'' for brevity.  Although it has been nearly a century since Leray's original contribution, important questions remain open about \eqref{eq.ns}. For example, it is not known if Leray solutions with regular data can develop finite time singularities. It is also unknown if unforced Leray solutions  are unique.  In recent years, evidence has accumulated suggesting negative answers to these questions. In the direction of blow-up, Tao has constructed singular solutions for a nonlinear model replicating certain features of \eqref{eq.ns} \cite{Tao}. 
	 
	 Regarding uniqueness,  Buckmaster and Vicol have demonstrated non-uniqueness in a class of solutions which is weaker than the Leray class using convex integration \cite{BuckmasterVicol}. These solutions are not known to satisfy the global or local energy inequalities.  Under singular forcing,  non-uniqueness has been shown in the Leray class by Albritton, Bru\'e and Colombo \cite{AlBrCo}. Within the Leray class and with no forcing, a conjectural research program of Jia and \v Sver\' ak \cite{JS,JiaSverakIll}, as well as the numerical work of Guillod and \v Sver\' ak \cite{GS}, provide strong evidence for non-uniqueness. This program would first establish non-uniqueness in a class of suitable weak solutions with large data in the critical Lorentz space $L^{3,\I}$.  This space   coincides with the weak Lebesgue space $L^3_w$ and is close to but strictly larger than $L^3$, as is illustrated by the inclusion $|x|^{-1}\in L^{3,\I}\setminus L^3$. In lieu of a precise definition,  this example provides a good intuitive understanding of $L^{3,\I}$ in comparison to nearby spaces. Very recently, and building upon ideas of Palasek \cite{Pa}, Coiculescu and Palasek have constructed non-unique mild solutions with data in $BMO^{-1}$ with no external forcing \cite{CoPa}. Note that $L^{3,\I}\subset BMO^{-1}\cap L^2_\loc$ while $BMO^{-1}$ is not a subset of $L^2_\loc$.  So, the present result  of Coiculescu and Palasek  cannot be extended to establish non-uniqueness in the Leray class via a procedure like that outlined in \cite{JiaSverakIll}, but as they remark, it may be possible to refine their argument to achieve this. Ultimately, there is a wealth of evidence suggesting non-uniqueness in the Leray class for initial data in $L^{3,\I}\cap L^2$.

	 Solutions to \eqref{eq.ns} have a natural scaling which is reflected in the preceding discussion:~If $u$ is a distributional solution to  \eqref{eq.ns}   with associated pressure $p$ and initial data $u_0$, then, for any $\lambda >0$, $
	 u^{\lambda}(x,t):=\lambda u(\lambda x,\lambda^2t)$
	 is also a solution with associated pressure 
	 $p^{\lambda}(x,t):=\lambda^2 p(\lambda x,\lambda^2t)$
	 and initial data 
	 $u_0^{\lambda}(x):=\lambda u_0(\lambda x)$. 
	 If $X$ is an appropriate function space, then it can be classified according to this scaling via the following identity: $\|u_0\|_X = \la^\al \|u_0^\la \|_X$.  A space is \textit{supercritical} if $\al<0$.  An example of such a space is $L^2$ and solution class is that of Leray.  These solutions are not known to be smooth or unique.
	 A space is  \textit{subcritical} if   $\al>0$. Examples of subcritical spaces are $L^p$ for $p\in (3,\I]$. Local regularity and well-posedness  are known in subcritical spaces. 
	 A space is critical if $\al=0$, in which case $\|\cdot\|_X$ is scaling invariant.  Local regularity and well-posedness are known in some, but not all, critical spaces. For example, if $u$ is a suitable weak solution and 
	 \EQ{\label{ineq.ESS}
	 	\sup_{0<t<T} \|u(t)\|_{L^3} <\I,
	 }
	 where the above quantity can be any size,
	 then $u$ is smooth on $(0,T)$ \cite{ESS}. Another result in this direction says that, if $u_0\in L^3$ and is possibly large, then there exists a unique smooth solution, at least for a finite period of time \cite{Kato}. The proofs of these results use  that $C_c^\I$ is dense in $L^3$.
	 Critical spaces where this closure property fails   are sometimes referred to as \textit{ultracritical}.  An example of such a space is $L^{3,\I}$.   An analogue of \eqref{ineq.ESS}   in $L^{3,\I}$ is not known to imply regularity and uniqueness is not expected for large $L^{3,\I}$ data.  This reflects the fact that ultracritical spaces can generally be viewed as   \textit{borderline  spaces for well-posedness and regularity results}.

	 A solution is self-similar if $u=u^{\la}$ for all $\la>0$ and is discretely self-similar if this only holds for some $\la$.  In 3D,   self-similar data is $-1$-homogeneous and, therefore, can be in $L^{3,\I}$ but  not $L^3$.  As they are scaling invariant, self-similar solutions can be viewed as \textit{borderline  solutions for well-posedness and regularity results}.  
	 Due to their special structure,  they are useful  tools compared to general elements of $L^{3,\I}$   to study these issues.
	 This approach has been taken in the analysis of \eqref{eq.ns} both classically and recently. 
	 In the 1930s, Leray identified the backward self-similar case  as a possible blow-up scenario. This was ruled out in \cite{NeRuSv,Tsai1} but the possibility of discretely self-similar blow-up remains open.  Indeed, Tao's example of blow-up for a modified model has a backward discretely self-similar quality \cite{Tao}.  
	 More recently Albritton, Bru\'e and Colombo's proof of non-uniqueness for the forced Navier-Stokes equations involved self-similar solutions, as does the work of \SVERAK~and coauthors on the non-uniqueness problem for the unforced Navier-Stokes equations \cite{AlBrCo,GS,JS}. 
	 The existence of self-similar solutions for large data has been studied extensively, starting with the work of Jia and \SVERAK~\cite{JS} and continuing in \cite{AB,BT1,ChaeWolf,LR2,Tsai-DSSI}.

	 Because self-similar data do not belong to $L^2$, theories of weak solutions in other spaces are required for their analysis. In the first construction of self-similar solutions for large data \cite{JS}, the class of local Leray solutions of Lemari\'e-Rieusset was used \cite{LR}. 
		An incomplete list of subsequent work on solutions in this vein is \cite{KS,KwTs,JS,Basson,BT8,MMP2,FDLR,LR-Morrey}.
	 Building off of an idea of C.~Calder\'on \cite{Calderon}, revised solution classes were introduced for $L^3$ data in \cite{SeSv},  for $L^{3,\I}$ data in \cite{BaSeSv} and for Besov space data in \cite{Barker,AB}. This paper is primarily interested in the $L^{3,\I}$ scenario,  which we refer to as $L^{3,\I}$-weak solutions. To avoid the technicalities of a precise definition at this point, it is sufficient to think of these as global, infinite energy solutions which admit self-similar data and   have nice properties compared to other   classes of weak solutions.  Notably, these classes have stronger properties than Leray weak solutions, reflecting the fact that the initial data is locally less singular than things can be in $L^2$.  This makes them natural spaces to study properties of weak solutions in more restrictive, namely \textit{critical}, classes than the Leray class. For example,  these solution theories have applications to weak-star stability \cite{SeSv,Barker,BaSeSv,AB}, concentration results at a singularity \cite{AB}, the structure of the singular set \cite{AB,Popkin}  and time-asymptotics at $t=0$ \cite{BP2}. We emphasize that these applications are mostly restricted to a \textit{critical} class of weak solutions (an exception is the result of Popkin \cite{Popkin}).  The related splitting idea of Calder\'on \cite{Calderon} has also been used in a number of applications, e.g., \cite{LR,Barker2,BW}.
	 
	 The weak solution theories of \cite{SeSv,BaSeSv,AB} and their applications are mostly confined to {critical} scenarios. But, blow-up or non-uniqueness could, hypothetically, emerge in strictly supercritical contexts. There is an algebraic gap between the worst case singular scenario allowed under the Leray scaling (which are only marginally better than $|x|^{-3/2}$) and the singularities in the initial data in \cite{SeSv,BaSeSv,AB} (which scale like $|x|^{-1}$). It would be beneficial to prove theorems in this intermediate range.  Motivated by this, the first goal of this paper is to extend the theory of   $L^{3,\I}$-weak solutions to $L^{p,\I}$ where $2<p<3$. Unlike $L^p$, these spaces contain homogeneuous initial data. A complementary work in this direction has already been carried out by Popkin in supercritical Besov spaces   \cite{Popkin}. Because Popkin works in the full scale of Besov spaces a certain ``decay'' estimate at $t=0$ is unavailable. This estimate is necessary  for our applications, a fact which leads us to develop a complimentary theory of $L^{p,\I}$-weak solutions in full detail from scratch. Doing so also allows us to establish stability under weak-star convergence in a more general setting than has been considered previously.   
	 
	  In the remainder of this section we first review $L^{3,\I}$-weak solutions. We then present our weak solution theory and introduce two applications, one of which quantifies the severity of non-uniqueness by way of a time-asymptotic expansion, thereby extending earlier work from \cite{BP2}, while the other establishes time regularity at regular points at a singular time.

	 \subsection{Review of $L^{3,\I}$-weak solutions}

	We begin by defining  $L^{3,\I}$-weak solutions as in \cite{BaSeSv}.

	 \begin{definition}[$L^{3,\I}$-weak solutions]\label{def.weak} Let $T>0$ be finite. Assume $u_0\in L^{3,\I}$ is divergence free. We say that $u$ and an associated pressure $p$ comprise an $L^{3,\I}$-weak solution if \begin{enumerate}
	 		\item $(u,p)$ satisfies \eqref{eq.ns} distributionally,
	 		\item $u$ satisfies the local energy inequality of Scheffer \cite{VS76b} and Caffarelli, Kohn and Nirenberg \cite{CKN}, i.e.~for all non-negative $\phi\in C_c^\I (\R^3\times (0,T])$ and $0<t<T$, we have
	 		\EQ{\label{ineq:CKN-LEI}
	 			&\int \phi(x,t) |u(x,t)|^2 \,dx + 2\int_0^t \int |\nb u|^2\phi\,dx\,dt \\&\leq \int_0^t \int |u|^2(\pd_t \phi + \De\phi )\,dx\,dt +\int_0^t \int (|u|^2+2p)(u\cdot \nb\phi)\,dx\,dt,
	 		}
	 		\item for every $w\in L^2$, the following function is continuous on $[0,T]$,
	 		\[
	 		t\to \int w(x)\cdot u(x,t)\,dx,
	 		\]
	 		\item $\td u :=u-e^{t\Delta}u_0$ satisfies, for all $t\in (0,T)$,
	 		\EQ{\label{ineq.BSSbound}
	 			\sup_{0<s<t}\| \td u \|^2_{L^2} (s) + \int_0^t \| \nb \td u\|_{L^2}^2(s)\,ds   <\I,
	 		}
	 		and
	 		\EQ{ \label{ineq:energyIneq}
	 			\| \td u\|_{L^2}^2(t) +2\int_0^t \int|\nb \td u|^2\,dx\,ds\leq 2\int_0^t \int ( e^{s\Delta}u_0 \otimes \td u + e^{s\Delta}u_0 \otimes e^{s\Delta}u_0) : \nb \td u\,dx\,ds.
	 		}
	 	\end{enumerate}
	 \end{definition}

  In \cite{BaSeSv}, it is proven that $L^{3,\I}$-weak solutions exist for any divergence free $u_0\in L^{3,\I}$. Stability under weak-star perturbations is also proven. An important observation in \cite{BaSeSv} is that the nonlinear part $\td u$ of a  $L^{3,\I}$-weak solution satisfies a dimensionless energy estimate, namely
	 \EQ{\label{ineq.BSSdecay}
	 	\sup_{0<s<t}\| \td u \|_{L^2} (s) +\bigg(\int_0^t \| \nb \td u\|_{L^2}^2(s)\,ds \bigg)^\frac 1 2 \lesssim_{u_0} t^{\frac 1 4}.
	 }
	 We emphasize that the energy associated with $\td u$ vanishes at $t=0$. A generalization of this property will play an important role in the present paper. It appeared earlier in the \textit{a priori} estimates of the weak discretely self-similar solutions constructed in \cite{BT1} as well as \cite{SeSv}, which is the precursor to \cite{BaSeSv}. 
	 It is used in the Calder\'{o}n-type splitting (see \cite{Calderon}) construction in \cite{BaSeSv} to deplete a time singularity in an integral estimate.
	 In \cite{AB}, it is established in Besov spaces with $e^{t\Delta}u_0$ replaced by higher Picard iterates.
	 As pointed out in \cite{BaSeSv}, the inequality \eqref{ineq.BSSdecay} is interesting in its own right as it can be viewed as an \textbf{estimate on the energy separation rate} of two non-unique   $L^{3,\I}$-weak solutions with the same data since, denoting two such solutions  by $u_1$ and $u_2$, we have
	 \[
	 \| u_1 - u_2\|_{L^2}(t)\lesssim \| \td u_1\|_{L^2}(t)+\|\td u_2\|_{L^2}(t)\lesssim t^{\frac 1 4}.
	 \]
	  In practice  it may be that supercritical singularities emerge from smooth solutions, at which point non-uniqueness could hypothetically occur. The preceding estimate, which only applies to critical singularities, does not limit the rate at which the error energy can grow in such a scenario. One motivation for our paper is to extend this  to supercritical non-uniqueness scenarios by way of a generalized notion of $L^{p,\I}$-weak solutions.

	 	 \subsection{ $L^{p,\I}$-weak solutions}
	 Our objective is  to extend the definition and main results from \cite{BaSeSv}.  
	 We run into some difficulty naively adapting the definition from \cite{BaSeSv}, however, because the term
	 \[
	 2\int_0^t \int ( e^{s\Delta}u_0 \otimes \td u + e^{s\Delta}u_0 \otimes e^{s\Delta}u_0) : \nb \td u\,dx\,ds,
	 \]
	 may not converge when $p$ gets close to $2$. In particular, if $\frac {12} 5<p$, then it is finite but this is not clear when $p\leq \frac {12}5$. Note that it is possible for the left-hand side of \eqref{ineq:energyIneq} to be finite while the above term is infinite. Therefore, a version of weak solutions can still be  formulated  for a modified class of solutions. We define this class now with the understanding that $2<p<3$. As we will see, the preceding term does not play a role in establishing $L^2$-decay as $t\to 0^+$, which is what we need for our applications, so this modification seems harmless.
	 
	 	 \begin{definition}[$L^{p,\I}$-weak solutions]\label{def.weak} Let $T>0$ be finite. Assume $u_0\in L^{p,\I}$ is divergence free. We say that $u$ and an associated pressure $p$ comprise an $L^{p,\I}$-weak solution if \begin{enumerate}
	 		\item $(u,p)$ satisfies \eqref{eq.ns} distributionally,
	 		\item $u$ satisfies the local energy inequality of Scheffer \cite{VS76b} and Caffarelli, Kohn and Nirenberg \cite{CKN}, i.e.~for all non-negative $\phi\in C_c^\I (\R^3\times (0,T])$ and $0<t<T$, we have
	 		\EQ{\label{ineq:CKN-LEI}
	 			&\int \phi(x,t) |u(x,t)|^2 \,dx + 2\int_0^t \int |\nb u|^2\phi\,dx\,dt \\&\leq \int_0^t \int |u|^2(\pd_t \phi + \De\phi )\,dx\,dt +\int_0^t \int (|u|^2+2p)(u\cdot \nb\phi)\,dx\,dt,
	 		}
	 		\item for every $w\in L^2$, the following function is continuous on $[0,T]$,
	 		\[
	 		t\to \int w(x)\cdot u(x,t)\,dx,
	 		\]
	 		\item $\td u :=u-e^{t\Delta}u_0$ satisfies, for all $t\in (0,T)$,
	 		\EQ{\label{ineq.BSSbound}
	 			\sup_{0<s<t}\| \td u \|^2_{L^2} (s) + \int_0^t \| \nb \td u\|_{L^2}^2(s)\,ds   <\I.
	 		}
	 	\end{enumerate}
	 \end{definition}

	 We are able to push through the major results from \cite{BaSeSv} for this definition as recorded in the following three theorems. 
	 
	 \begin{theorem}[Existence]\label{thrm.existence}

	 Assume $u_0\in L^{p,\I}$ for some $2<p<3$ and is divergence free. Then there exists an $L^{p,\I}$-weak solution with initial data $u_0$. 
	 \end{theorem}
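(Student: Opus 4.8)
The plan is to adapt the Calder\'on-type splitting construction of \cite{SeSv,BaSeSv}, the one genuinely new point being that $L^{p,\I}$ with $p<3$ admits a more favorable truncation than $L^{3,\I}$. Splitting $u_0$ at amplitude $N>0$ and applying the Leray projection $\mathbb P$ (bounded on $L^r$ and on $L^{r,\I}$ for $1<r<\I$) writes $u_0=u_0^{(1)}+u_0^{(2)}$ with both summands divergence free, where $u_0^{(2)}:=\mathbb P(u_0\mathbf 1_{\{|u_0|>N\}})\in L^2$ (here $p>2$ is used, since on $\{|u_0|>N\}$ one has $|u_0|^2\le N^{2-p}|u_0|^p$), and $u_0^{(1)}:=\mathbb P(u_0\mathbf 1_{\{|u_0|\le N\}})$ lies in $L^r$ for \emph{every} $r\in(p,\I)$ with norm tending to $0$ as $N\to0$: the distribution-function bound $|\{|u_0|>\al\}|\lesssim\al^{-p}$ is integrable against $\al^{r-1}\,\ind\al$ near $\al=0$ precisely when $r>p$. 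I would fix $N$ small so that $\|u_0^{(1)}\|_{L^3}$ and $\|u_0^{(1)}\|_{L^6}$ are below the threshold of small-data theory.

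Next I would invoke small-data mild solution theory for $u_0^{(1)}$ to produce a global $U$ with $U(0)=u_0^{(1)}$, smooth on $\R^3\times(0,\I)$, with $\sup_{t>0}t^{1/2}\|U(t)\|_{L^\I}+\sup_{t>0}\|U(t)\|_{L^6}\lesssim\e$, and with nonlinear part $\td U:=U-e^{t\De}u_0^{(1)}$ obeying $\|\td U(t)\|_{L^2}\lesssim\e^2t^{1/2}$ and $\|\nb\td U(t)\|_{L^2}\lesssim\e^2t^{-1/4}$. Writing the sought solution as $u=U+v$ forces $v$ to solve the $U$-perturbed system $\pd_t v-\De v+v\cdot\nb v+v\cdot\nb U+U\cdot\nb v+\nb q=0$, $\nb\cdot v=0$, $v(0)=u_0^{(2)}\in L^2$, which I would solve by Galerkin approximation. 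In the energy identity the terms $\int(v\cdot\nb v)\cdot v$ and $\int(U\cdot\nb v)\cdot v$ vanish by incompressibility, and the sole remaining perturbative term obeys, after integrating by parts and using $\|v\|_{L^3}\lesssim\|v\|_{L^2}^{1/2}\|\nb v\|_{L^2}^{1/2}$,
\[
\Bigl|\int(v\cdot\nb U)\cdot v\,\ind x\Bigr|=\Bigl|\int(v\cdot\nb v)\cdot U\,\ind x\Bigr|\lesssim\|U\|_{L^6}\|v\|_{L^2}^{1/2}\|\nb v\|_{L^2}^{3/2}\le\tfrac14\|\nb v\|_{L^2}^2+C\|U\|_{L^6}^4\|v\|_{L^2}^2 .
\]
Since $\int_0^T\|U(s)\|_{L^6}^4\,\ind s\le T\sup_{s>0}\|U(s)\|_{L^6}^4<\I$, Gr\"onwall's inequality gives, uniformly along the scheme, $\sup_{0<t<T}\|v(t)\|_{L^2}^2+\int_0^T\|\nb v(s)\|_{L^2}^2\,\ind s\le C(u_0,T)$. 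Strong $L^2_{\loc}$ convergence (Aubin--Lions), which handles the quadratic terms and the pressure, then yields a limit $v\in L^\I(0,T;L^2)\cap L^2(0,T;\dot H^1)\cap C_w([0,T];L^2)$ solving the perturbed system distributionally and satisfying the local energy inequality attached to it.

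Finally I would set $u=U+v$, $p=p_U+q$, and verify Definition~\ref{def.weak}. Item (1) is immediate since $u\cdot\nb u=(U+v)\cdot\nb(U+v)$ recombines the equations for $U$ and $v$; item (3) follows from $v\in C_w([0,T];L^2)$ together with continuity of $U$ in the relevant local topology; and item (4) holds because $\td u=\td U+(v-e^{t\De}u_0^{(2)})$, where the first term is controlled on $(0,T)$ by the bounds on $\td U$ above (in particular $\|\nb\td U(t)\|_{L^2}\lesssim t^{-1/4}$ is square-integrable near $t=0$) and the second lies in $L^\I_tL^2\cap L^2_t\dot H^1$ since $v$ and $e^{t\De}u_0^{(2)}$ both do. The delicate verification is item (2): any test function in $C_c^\I(\R^3\times(0,T])$ is supported in $[\de,T]\times K$ for some $\de>0$ and compact $K$, where $U$ is smooth, so I would combine the local energy inequality for $v$ with the classical (pointwise on the support) energy identity for $U$, expanding $|u|^2=|v|^2+2U\cdot v+|U|^2$ and the associated fluxes, to recover \eqref{ineq:CKN-LEI} for $u$.

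I expect the main obstacle to be closing the energy estimate for $v$ up to $t=0$. The drift $U$ is \emph{not} small in $L^\I$ near $t=0$ --- only $\|U(t)\|_{L^\I}\lesssim t^{-1/2}$, which by itself produces a divergent logarithm in Gr\"onwall --- and what rescues the argument is the \emph{uniform-in-time} smallness of $\|U(t)\|_{L^6}$ (equivalently of $\|U(t)\|_{L^r}$ for $r\in(p,\I)$), available precisely because the truncated datum $u_0^{(1)}$ gains integrability in every such $L^r$. This is the ``decay at $t=0$'' input that is unavailable in the full scale of supercritical Besov spaces, and is what forces the $L^{p,\I}$ theory to be developed on its own terms rather than deduced from Popkin's work.
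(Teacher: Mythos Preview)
Your proposal is correct but takes a genuinely different route from the paper. You build the solution directly by a Calder\'on splitting: truncate $u_0$ at level $N$ (small), solve for the small subcritical part $u_0^{(1)}$ via global Kato theory to obtain a smooth drift $U$, then run a perturbed Leray--Galerkin construction for $v$ with $L^2$ data $u_0^{(2)}$, and finally verify that $u=U+v$ meets Definition~\ref{def.weak}. The paper instead deduces existence as a corollary of Theorems~\ref{thrm.aprioribound} and~\ref{thrm.stability}: it approximates $u_0$ weak-star by $u_0^{(k)}\in C_{c,\sigma}^\I$, takes classical suitable Leray solutions $u^{(k)}$ (which are trivially $L^{p,\I}$-weak), and invokes the stability theorem to pass to a limit. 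Thus the paper's existence proof is essentially two lines once the a~priori bound and stability are in hand, whereas yours is self-contained and does not need Theorem~\ref{thrm.stability} at all. What your approach buys is a construction that is independent of the stability machinery; what the paper's approach buys is modularity---the hard analysis is concentrated in Theorem~\ref{thrm.aprioribound}, and both existence and stability then follow cheaply. Your key structural observation, that for $p<3$ the truncated datum $u_0^{(1)}$ lands in $L^r$ for \emph{every} $r>p$ (in particular $L^3$ and $L^6$) with small norm, so that $U$ enjoys a uniform-in-time $L^6$ bound and Gr\"onwall closes up to $t=0$, is correct and is morally the same mechanism that drives the paper's a~priori bound (compare the Lorentz decomposition lemma and the proof of Theorem~\ref{thm:splitting}). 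Two minor remarks: your claimed bound $\|\nb\td U(t)\|_{L^2}\lesssim t^{-1/4}$ is pessimistic---using $\|U\|_{L^6}\lesssim\e$ one actually gets $\|\nb\td U(t)\|_{L^2}\lesssim\e^2 t^{1/4}$---though either suffices for square-integrability near $t=0$; and the verification of the local energy inequality for $u=U+v$ from that of $v$, which you flag as delicate, is indeed a computation of exactly the sort carried out in the paper's proof of Theorem~\ref{thrm.stability} (where the caloric part $V$ plays the role of your $U$).
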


	\begin{theorem}[\textit{A priori} bound]\label{thrm.aprioribound}
		Assume $u_0\in L^{p,\I}$ for some $2<p<3$ and is divergence free. Let $u$ be an $L^{p,\I}$-weak solution with data $u_0$. Then,  letting $\sigma(p)=\frac 1 2 \frac {p-2}{4-p}\in (0,1/2)$, 
		\[
		\| u-e^{t\Delta}u_0\|_{L^2}^2(t)+\int_0^t  \| \nabla(u-e^{t\Delta}u_0)\|_{L^2}^2\,dt \leq      C_{p }   \left[
		\|u_0\|^\frac{2p}{4-p}_ {L^{p,\infty}}  
		    t^{\sigma(p)}
		+  t^{\frac 1 2}
		\right].
		\]
		Consequently, 
		\[
		\|u -e^{t\Delta}u_0\|_{L^2}^2(t)\to 0\text{ as }t\to 0.
		\]
	\end{theorem}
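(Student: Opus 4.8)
The plan is to subtract from $u$ not $e^{t\Delta}u_0$ itself but the heat evolution of a bounded truncation of the data, derive a global energy inequality for the resulting difference, close it by a differential Gronwall argument, and optimize over the truncation level. The point of truncating is that when one tests the equation for $\tilde u := u - e^{t\Delta}u_0$ against $\tilde u$, the quadratic terms $\int_0^t\!\int (e^{s\Delta}u_0\otimes\tilde u + e^{s\Delta}u_0\otimes e^{s\Delta}u_0):\nabla\tilde u$ carry a power of $s$ that fails to be integrable near $s=0$ for every $2<p<3$, no matter how one distributes the derivative on $\tilde u$ and absorbs factors into the dissipation --- this is exactly the supercriticality. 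Replacing $e^{s\Delta}u_0$ by $e^{s\Delta}w_0$ with $w_0$ both bounded and in $L^4$ eliminates all negative powers of $s$ from these terms, because the heat semigroup is a contraction on $L^4$.

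Fix a parameter $\lambda>0$ and split $u_0=v_0+w_0$ with $v_0:=u_0\,\mathbf{1}_{\{|u_0|>\lambda\}}$ and $w_0:=u_0-v_0$; after applying the Leray projection (which is bounded on $L^2$ and on $L^4$) we may assume $v_0$ and $w_0$ are divergence free. Computing with the distribution function of $u_0\in L^{p,\infty}$ gives $\|v_0\|_{L^2}^2\lesssim_p\lambda^{2-p}M^p$ and $\|w_0\|_{L^4}^4\lesssim_p\lambda^{4-p}M^p$, where $M:=\|u_0\|_{L^{p,\infty}}$ and both exponents are admissible since $2<p<3$. Put $W(s):=e^{s\Delta}w_0$, so that $\|W(s)\|_{L^4}\lesssim_p\lambda^{1-p/4}M^{p/4}$ uniformly in $s$, and $\bar u:=u-W=\tilde u+e^{t\Delta}v_0$; then $\bar u$ is divergence free, $\bar u(0)=v_0$, and $\bar u\in L^\infty_tL^2\cap L^2_t\dot H^1$ on $(0,T)$ by the a priori finiteness built into the definition of an $L^{p,\infty}$-weak solution together with $v_0\in L^2$. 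Testing $\partial_t\bar u-\Delta\bar u+\nabla\!\cdot(u\otimes u)+\nabla p=0$ against $\bar u$, using $\nabla\!\cdot\bar u=0$ to discard the pressure and the cubic self-interaction $\int(\bar u\otimes\bar u):\nabla\bar u$, and writing $u=\bar u+W$, leaves formally
\[
\tfrac12\tfrac{d}{dt}\|\bar u\|_{L^2}^2+\|\nabla\bar u\|_{L^2}^2=\int(\bar u\otimes W+W\otimes\bar u):\nabla\bar u\,dx+\int(W\otimes W):\nabla\bar u\,dx=:J_1+J_2 .
\]

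Making this identity rigorous (as an inequality) for an arbitrary $L^{p,\infty}$-weak solution is the main technical step, and I expect it to be the chief obstacle. From the Caffarelli--Kohn--Nirenberg local energy inequality one first derives a local energy inequality for $\bar u$; one then tests it with $\phi=\psi_R^2\theta$, $\psi_R$ a spatial cutoff exhausting $\R^3$, and sends $R\to\infty$. The remainders live on dyadic annuli where $|\nabla\psi_R|\lesssim R^{-1}$: the cubic ones vanish as $R\to\infty$ using $\bar u\in L^\infty_tL^2\cap L^2_t\dot H^1$ and the spatial decay of $W$, and the pressure remainder is handled by splitting $p$ according to $u\otimes u=W\otimes W+(\bar u\otimes W+W\otimes\bar u)+\bar u\otimes\bar u$ and applying Calder\'on--Zygmund estimates to each piece. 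Identifying the associated pressure with this Riesz-transform expression and controlling its decay at spatial infinity is the delicate point, and the one most sensitive to the move from $L^{3,\infty}$ to $L^{p,\infty}$.

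Granting the energy inequality, bound $|J_1|\le 2\|W\|_{L^4}\|\bar u\|_{L^4}\|\nabla\bar u\|_{L^2}\lesssim\|W\|_{L^4}\|\bar u\|_{L^2}^{1/4}\|\nabla\bar u\|_{L^2}^{7/4}$ by Gagliardo--Nirenberg and $|J_2|\le\|W\|_{L^4}^2\|\nabla\bar u\|_{L^2}$, so Young's inequality gives
\[
\tfrac{d}{dt}\|\bar u\|_{L^2}^2+\|\nabla\bar u\|_{L^2}^2\le C\|W\|_{L^4}^8\|\bar u\|_{L^2}^2+C\|W\|_{L^4}^4\le C\lambda^{2(4-p)}M^{2p}\|\bar u\|_{L^2}^2+C\lambda^{4-p}M^p ,
\]
with coefficients constant in $t$. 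Gronwall on $(0,t)$ with $\|\bar u(0)\|_{L^2}^2\lesssim_p\lambda^{2-p}M^p$, followed by integrating the differential inequality in $t$, bounds $\sup_{0<s<t}\|\bar u\|_{L^2}^2(s)+\int_0^t\|\nabla\bar u\|_{L^2}^2$ by $e^{C\lambda^{2(4-p)}M^{2p}t}\,C_p\bigl(\lambda^{2-p}M^p+\lambda^{4-p}M^p t\bigr)$. Finally choose $\lambda=\lambda(t):=c\,M^{-p/(4-p)}t^{-1/(2(4-p))}$ with $c$ a small absolute constant: then $\lambda^{2(4-p)}M^{2p}t=c^{2(4-p)}$, so the exponential is $\le2$, while $\lambda^{2-p}M^p=c^{2-p}M^{2p/(4-p)}t^{(p-2)/(2(4-p))}$ and $\lambda^{4-p}M^p t=c^{4-p}t^{1/2}$. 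Passing from $\bar u$ back to $\tilde u$ via $\|e^{t\Delta}v_0\|_{L^2}^2\le\|v_0\|_{L^2}^2$ and $\int_0^t\|\nabla e^{s\Delta}v_0\|_{L^2}^2\,ds\le\tfrac12\|v_0\|_{L^2}^2$ then yields exactly the asserted estimate, with $\sigma(p)=\tfrac12\tfrac{p-2}{4-p}$. The concluding statement is then immediate, since $\sigma(p)>0$ for $p>2$ forces the right-hand side to vanish as $t\to0^+$.
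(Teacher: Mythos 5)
Your proof takes essentially the same route as the paper's: decompose $u_0$ into a subcritical $L^4$ piece and an $L^2$ remainder, derive a global energy inequality for $u$ minus the caloric extension of the $L^4$ piece (a step you correctly flag as delicate and, like the paper, defer to the argument in \cite[Lemma~3.3]{BaSeSv}), close it via Ladyzhenskaya--Gr\"onwall, and optimize the truncation level, which reproduces the exponents $\sigma(p)=\tfrac12\tfrac{p-2}{4-p}$ and $\tfrac12$. The only cosmetic difference is that the paper invokes the Barker--Seregin--\v Sver\'ak Lorentz-space decomposition lemma as an abstract packaging of your level-set truncation, and carries a free subcritical index $\alpha\in(3,4]$ before specializing to $\alpha=4$, whereas you work with $L^4$ directly.
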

	 
	 \begin{theorem}[Stability under weak-star convergence]\label{thrm.stability}
	 	Assume $\{u_{0}^{(k)}\}\subset L^{p,\I}$ for some $2<p<3$ is a sequence of divergence free vector fields which converges in the weak-star sense to some divergence free $u_0\in L^{p,\I}$. Let $\{u^{(k)}\}$ be a sequence of $L^{p,\I}$-weak solutions each for the initial data $u_0^{(k)}$. Then there exists a sub-sequence $u^{(k_j)}$  that converges in the sense of distributions to an $L^{p,\I}$-weak solution $u$ with data $u_0$.
	 \end{theorem}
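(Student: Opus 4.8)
The plan is to run the weak-star stability argument of \cite{BaSeSv}, letting the a priori bound of Theorem~\ref{thrm.aprioribound} take over the role that the dimensionless estimate \eqref{ineq.BSSdecay} plays there: it supplies the uniform-in-$k$ control on the nonlinear part together with its vanishing at $t=0$. First I would record uniform bounds. Since weak-star convergent sequences are bounded, $M:=\sup_k\|u_0^{(k)}\|_{L^{p,\I}}<\I$. Writing $U^{(k)}:=e^{t\De}u_0^{(k)}$ and $\td u^{(k)}:=u^{(k)}-U^{(k)}$, standard heat-semigroup estimates bound $U^{(k)}$ uniformly in $L^\I((0,T);L^{p,\I})$ and in suitable time-weighted mixed Lebesgue norms on $(0,T)\times\R^3$, while $u_0^{(k)}\rightharpoonup u_0$ and the smoothing of the heat kernel give $U^{(k)}\to e^{t\De}u_0$ pointwise and in $L^q_\loc((0,T)\times\R^3)$. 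Theorem~\ref{thrm.aprioribound} then yields, for every $T'<T$,
\[
\sup_{0<s<T'}\|\td u^{(k)}\|_{L^2}^2(s)+\int_0^{T'}\|\nb\td u^{(k)}\|_{L^2}^2\,ds\le C_p\Bigl[M^{\frac{2p}{4-p}}(T')^{\sigma(p)}+(T')^{\frac12}\Bigr]=:A(T'),
\]
uniformly in $k$, with $A(T')\to0$ as $T'\to0$; Gagliardo--Nirenberg interpolation then bounds $\td u^{(k)}$ uniformly in $L^{10/3}((0,T')\times\R^3)$.

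Next I would set up compactness. The perturbation $\td u^{(k)}$ solves distributionally $\pd_t\td u^{(k)}-\De\td u^{(k)}+\nb\cdot\bigl((\td u^{(k)}+U^{(k)})\otimes(\td u^{(k)}+U^{(k)})\bigr)+\nb\pi^{(k)}=0$, with $\pi^{(k)}$ recovered from $-\De\pi^{(k)}=\nb\otimes\nb:\bigl((\td u^{(k)}+U^{(k)})^{\otimes2}\bigr)$; splitting $\pi^{(k)}$ on each ball into a Calder\'on--Zygmund part and a harmonic far-field correction and using the bounds above should give uniform-in-$k$ bounds for $\pi^{(k)}$ in some $L^q_\loc((0,T)\times\R^3)$, $q>1$, and then the equation bounds $\pd_t\td u^{(k)}$ uniformly in a space $L^q_\loc((0,T);W^{-1,q}_\loc)$. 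By Aubin--Lions--Simon, a subsequence (not relabeled) satisfies $\td u^{(k)}\to\td u$ strongly in $L^2_\loc((0,T)\times\R^3)$, hence in $L^3_\loc$ after interpolating with the uniform $H^1_\loc$ bound, with $\nb\td u^{(k)}\rightharpoonup\nb\td u$ in $L^2_\loc$, $\td u^{(k)}\rightharpoonup\td u$ weak-star in $L^\I_\loc L^2_\loc$, and $\pi^{(k)}\rightharpoonup\pi$ in $L^q_\loc$; setting $u:=\td u+e^{t\De}u_0$, the same convergences hold for $u^{(k)}\to u$.

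Then I would pass to the limit in each condition of Definition~\ref{def.weak}. The distributional form of \eqref{eq.ns} passes since $u^{(k)}\otimes u^{(k)}\to u\otimes u$ in $L^1_\loc$ (product of two bounded sequences converging strongly in $L^2_\loc$) and the rest is linear, and $\nb\cdot u=0$ is preserved. In \eqref{ineq:CKN-LEI} the left side at fixed $t$ uses strong $L^2_\loc$ convergence of $u^{(k)}(\cdot,t)$ for a.e.\ $t$ and weak lower semicontinuity of $\int_0^t\int|\nb u|^2\phi$, while the right side converges by strong $L^3_\loc$ convergence in the cubic term and by pairing $\pi^{(k)}\rightharpoonup\pi$ (weakly in $L^q_\loc$) against $u^{(k)}\to u$ (strongly in $L^{q'}_\loc$) in the pressure term. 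Condition (3) follows because the maps $t\mapsto\int w\cdot u^{(k)}(\cdot,t)\,dx$ are uniformly equicontinuous for $w$ in a dense subclass (from the uniform bound on $\pd_t u^{(k)}$ and the uniform spatial bounds) and converge pointwise in $t$, so the limit is continuous, and one extends to all $w\in L^2$ by density and the uniform bounds. Finally, condition (4) and the identification of the data come from applying Theorem~\ref{thrm.aprioribound} in the limit: $\td u^{(k)}(\cdot,t)\rightharpoonup\td u(\cdot,t)$ in $L^2$ for a.e.\ $t$ and weak lower semicontinuity (with monotone exhaustion of $\R^3$ in the gradient term) give $\|\td u\|_{L^2}^2(t)+\int_0^t\|\nb\td u\|_{L^2}^2\,ds\le A(t)<\I$, and $A(t)\to0$ forces $u-e^{t\De}u_0\to0$ in $L^2$ as $t\to0$, so $u$ has data $u_0$.

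The hard part will be the uniform local pressure estimate: because each $u^{(k)}$ lies only in $L^{p,\I}$ globally (never in $L^2$), the Leray-class pressure bounds do not apply directly, so the localized Calder\'on--Zygmund / harmonic-correction decomposition must be done carefully, controlling the far-field contribution through the uniform $L^{p,\I}$ bound on $U^{(k)}$ and the uniform energy bound on $\td u^{(k)}$ supplied by Theorem~\ref{thrm.aprioribound} --- this is precisely where the structure of $L^{p,\I}$-weak solutions, rather than general infinite-energy solutions, is used. A secondary subtlety is making the local energy inequality pass at almost every fixed time rather than only after averaging in $t$, which I would handle by exploiting the control the inequality itself gives on the time-variation of $t\mapsto\int\phi\,|u^{(k)}(\cdot,t)|^2\,dx$. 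Apart from these points the argument is a direct adaptation of \cite{BaSeSv}.
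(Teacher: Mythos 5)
Your proposal follows the same broad outline as the paper's argument --- uniform bounds on $\td u^{(k)}$ from Theorem~\ref{thrm.aprioribound}, Aubin--Lions--Simon compactness, passing to the limit in each clause of Definition~\ref{def.weak}, and identifying the data via decay of $\|\td u\|_{L^2}$. The one genuine point of divergence is the compactness mechanism. You flag ``the uniform local pressure estimate'' as the hard part and propose to build it via a Calder\'on--Zygmund plus harmonic-correction decomposition, as in the local-energy-solution literature. The paper sidesteps this entirely: to bound $\pd_t w^{(k)}$ in $L^{4/3}(0,T;H^{-1}(B(n)))$, it pairs the equation for $w^{(k)}$ against $\mathbb{P}\phi$ for test functions $\phi$, so the pressure gradient vanishes from the time-derivative estimate altogether; what remains are the convection terms, handled by Ladyzhenskaya for the $w^{(k)}$-$w^{(k)}$ piece and by the time-weighted $L^3$ smoothing of $V^{(k)}$ for the mixed pieces, which are integrable in $L^{4/3}_t$ precisely because $p<6$. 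Your route would also work --- the pressure does have to be controlled anyway to pass to the limit in the local energy inequality, which you correctly handle by pairing a weakly convergent pressure against a strongly convergent velocity, and which the paper glosses over --- but for the compactness step the projected-test-function trick is strictly cheaper, and it is the main thing you would want to change to match the clean line of the paper's proof.
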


	 \medskip\noindent \textbf{Discussion of Theorems \ref{thrm.existence}-\ref{thrm.stability}:}
	 \begin{enumerate}
	 	\item The main task in proving these theorems is to establish the \textit{a priori} bound in Theorem \ref{thrm.aprioribound}. Once this is done, the   proofs of Theorems \ref{thrm.existence} and \ref{thrm.stability} follow their counterparts in \cite{BaSeSv}.
	 		 \item
	 	In view of the existence of self-similar $L^{3,\I}$-weak solutions \cite{JS,Tsai-DSSI,BT1,BaSeSv,AB}, 
	 	the \textit{ a priori} bound \eqref{ineq.BSSdecay} is \textit{sharp}. It would be interesting to establish the sharpness of the endpoint case of the corresponding decay estimate for $L^{p,\I}$-weak solutions when $p<3$, but doing so seems nontrivial. In particular, even though  $L^{p,\I}$   includes homogeneous initial data for $2<p<3$, homogeneity cannot be used because the initial data space is not scaling invariant under the Navier-Stokes scaling.   It would be interesting   to understand the loss of homogeneity in an  $L^{p,\I}$-weak solution with homogeneous data.
	 	\item Stability under weak-star convergence is important to analyze because it is possibly related to the question of whether or not Leray weak solutions satisfy the global energy \textit{equality}. Some discussion of this is available in \cite[Remark 1.3.3 on p. 631]{BaSeSv}. The occurrence of the \textit{a priori} bound in Theorem \ref{thrm.aprioribound} limits the rate at which high frequency activity in the initial data can be transferred to large scales in short periods of time---a dynamic that could lead to the failure of the energy equality. Our contribution shows that the same restriction applies for Leray weak solutions with data in $L^{p,\I}$ where $2<p<3$ as in $L^{3,\I}$. However, the restriction degrades as $p$ decreases to $2$ indicating a path for the failure of the energy equality. Proving the bounds in Theorem \ref{thrm.aprioribound} are sharp by way of counterexamples (see previous comment) could possibly provide a way to construct Leray weak solutions that \textit{do} transfer activity from small to large scales at arbitrary rates.
	 	\item 
	 	A mild solution to \eqref{eq.ns} is one that satisfies the formula
	 	\[
	 	u(x,t)=e^{t\Delta}u_0 +B(u,u), 
	 	\]
	 	where $\mathbb P$ is the Leray projection operator and
	 	\[
	 	B(f,g):=- \int_0^te^{(t-s)\Delta} \mathbb P \nb \cdot \big( f\otimes g )\big)\,ds.
	 	\]
	 	 Because $L^{p,\I}$-weak solutions have spatial decay, the pressure can be solved for from $u$ using Riesz transforms in a standard way---see \cite[p.~109]{LR} or \cite{JiaSverakIll,BT7}. There is an equivalence between mild solutions and solutions whose pressure is given by Riesz transforms \cite{LR,BT7}. Consequently, $L^{p,\I}$-weak solutions are also mild. We will use this fact frequently throughout the paper. 
	 
	\item 
	Popkin generalizes the work of Albritton and Barker \cite{AB} from the full range of critical non-endpoint Besov spaces between $L^{3,\I}$ and $\dot B^{-1}_{\I,\I}$ to the full range of supercritical non-endpoint  Besov spaces containing $L^{p,\I}$ for $2<p<3$,  but does not include a version of the decay estimate \eqref{ineq.BSSdecay}. We presently share our guess as to why this is the case.  In the context of Albritton and Barker, to accommodate the full range of critical Besov spaces, it is necessary to replace $\|u-e^{t\Delta}u_0\|_{L^2}$ with $\|u-P_k\|_{L^2}$ where $P_k$ is a higher Picared iterate\footnote{Recall  the definition of Picard iterates: Let $P_0 = P_0 (u_0) = e^{t\Delta}u_0$ and define the $k^{\text{th}}$ Picard iterate recursively to be $P_k = P_0 + B(P_{k-1},P_{k-1})$.} chosen based on the Besov space in view. This is allowable in \cite{AB} because $\|u-P_0\|_{L^2}$ and $\|u-P_k\|_{L^2}$ satisfy the same estimate precisely because $\| P_k-P_{k-1}\|_2$ does for all $k$. Intuitively, in the sub-critical setting, $\| P_k-P_{k-1}\|_2$ gets smaller as $k$ increases while in the critical setting $\| P_k-P_{k-1}\|_2$ doesn't get smaller but it also doesn't get larger. However, in the supercritical setting, $\| P_k-P_{k-1}\|_2$ does, in principle, get larger as $k$ increases. Since higher values of $k$ must be considered to accommodate  rougher Besov spaces, but higher values of $k$ lead to a degradation in the supercritical setting, there seems to be no hope to get the decay estimate in Popkin's setting without restricting attention to a sub-scale of spaces.  For this reason, the formulation of Popkin's results more closely resembles the earlier work of Calder\'on. Based on the findings in this paper, we conjecture that for each $p\in(2,3)$ there is a range of Besov spaces close to $L^{p,\I}$ for which a version of Theorem \ref{thrm.aprioribound} can be established. 
	\item The \textit{a priori} bound in Theorem \ref{thrm.aprioribound} can be extended to other space-time norms, a fact which is useful in applications as the power on the right-hand side can be  increased, indicating a tighter confinement. The details of this appear in Proposition \ref{prop.LpinftyDecaySpacteTime} which is used in our first application.
	\item For $2<p<3$, $L^{p,\I}$-weak solutions \textit{eventually regularize} in that there exists a time $T$ so that $\sup_{T<t<\I}\|u\|_{L^\I}<\I$ for all $t>T$. This follows from a result in \cite{BT8}. Interestingly, this property is not shared by $L^{3,\I}$-weak solutions which presumably do not necessarily eventually regularize---this would be the case if, e.g., discretely self-similar solutions can possess singularities. This discrepancy reflects the fact that large scales are suppressed by the finiteness of the $L^{p,\I}$ norm to a greater degree than they are by the finiteness of the $L^{3,\I}$ norm. The fact that the large time behavior is better when $p<3$ is also apparent in Remark \ref{rem:long-time} which discusses an improvement to Theorem \ref{thrm.aprioribound} as $t\to \I$.
\end{enumerate}
	 
	 \subsection{Application 1: Time-asymptotic expansions and separation rates}
	  Classically, the Picard iterates converge to a solution to \eqref{eq.ns} whenever \eqref{eq.ns} can be viewed as a perturbation of the heat equation.  This is the case for large data in $L^p$ for $p>3$ or for small data in $L^{3,\I}$ where a Picard scheme is used to construct solutions \cite{FJR,Kato}.  This is not known, and likely fails,  for large $L^{3,\I}$ data, so we do not expect convergence of $P_k$ to $u$ when $u$ is an $L^{3,\I}$-weak solution. It is even less likely to be the case in the supercritical class of  $L^{p,\I}$-weak solutions. In the case of $L^{3,\I}$-weak solutions, it is shown in \cite{BP2} that   Picard iterates do capture some local asymptotics at $t=0$ of  $L^{3,\I}$-weak solutions. In particular, that paper contains the following theorem,  \cite[Theorem 1.3]{BP2}, which generalizes a finding for discretely self-similar solutions in \cite{BP1}.  
	 \begin{theorem}[Local asymptotic expansion---critical scenario]\label{thrm.main2BP2}
	 	Assume $u_0\in L^{3,\I}$ and is divergence free. Let $u$ be an $L^{3,\I}$-weak solution with data $u_0$. Fix $x_0\in \R^3$ and $p\in (3,\I]$. Assume further that $u_0|_B\in L^p(B)$ where $B=B_2(x_0)$. Then, there {exist} $\ga =\ga (p)\in(0,1)$ and $T=T(p,\|u_0\|_{L^{3,\I}},\|u_0\|_{L^p(B)})>0$ so that, for any 
	 	$\sigma \in (0,3/2)$, $t\in (0,T)$ and $k=0,1,\ldots, k_0$,
	 	\EQ{\label{ineq:thrm.main2}
	 		\| u-P_k\|_{L^\I (B_{1/4}(x_0))} (t)\lesssim_{p,u_0,\sigma,k} t^{a_k}, 
	 	}
	 	where $a_0 = \min\{\ga/2, 1/2-3/(2p)\}$,
	 	$a_{k+1} =\min\left\{\sigma, k( 1/2 -3/(2p))+a_0\right\}$ and 
	 	$k_0$ is the smallest natural number so that 
	 	\[
	 	k_0\bigg(\frac 1 2 -\frac 3 {2p}\bigg) +a_0 \geq \sigma.
	 	\]
	 	In particular, $a_{k_0}=\sigma$ and $a_k>a_{k-1}$ for $k=1,\ldots,k_0$. It follows that, for $(x,t)\in B_{1/4}(x_0) \times (0,T)$, and letting $a_{-1}= -3/(2p)$, we have
	 	\[
	 	u(x,t)= P_0 + \sum_{k=0}^{k_0-1}\mathcal O (t^{a_k}) + \mathcal O(t^{\sigma}) = \sum_{k=-1}^{k_0}\mathcal O (t^{a_k}),
	 	\]
	 	where the $\mathcal O (t^{a_k})$ terms are exactly solvable for $-1\leq k<k_0$. 
	 \end{theorem}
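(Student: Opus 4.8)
\medskip
\noindent\textbf{Proof proposal.}
The plan is to first reduce the theorem to the family of local bounds \eqref{ineq:thrm.main2}, and then establish those by induction on $k$, using the mild formulation $u = e^{t\Delta}u_0 + B(u,u)$ and $\epsilon$-regularity as the engine. Granting \eqref{ineq:thrm.main2}, the displayed expansion is immediate by telescoping: each increment $P_{j+1}-P_j = (u-P_j)-(u-P_{j+1})$ is built explicitly from $u_0$ and, since $a_{j+1}>a_j$, is locally $\mathcal O(t^{a_{j}})$, while $u-P_{k_0}=\mathcal O(t^{\sigma})$; the entry $a_{-1}=-3/(2p)$ simply records the mild local blow-up $\|e^{t\Delta}u_0\|_{L^\I(B_{1/4}(x_0))}(t)\lesssim t^{-3/(2p)}$ coming from heat smoothing of $L^p(B)$ data (the contribution of $u_0$ away from $B$ being bounded by $\|u_0\|_{L^{3,\I}}$).

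\medskip
\noindent\textbf{Base case ($k=0$).} Set $\td u := u-e^{t\Delta}u_0$; it solves a perturbation of \eqref{eq.ns} whose forcing is built from $e^{t\Delta}u_0$. By \eqref{ineq.BSSdecay} we have the dimensionless decay $\sup_{0<s<t}\|\td u\|_{L^2}^2(s)+\int_0^t\|\nb\td u\|_{L^2}^2\,ds\lesssim_{u_0}t^{1/2}$, so the scaled local energy of $\td u$ on parabolic cylinders inside $B_{1/2}(x_0)\times(0,T)$ vanishes as the cylinder shrinks, at a polynomial rate; the corresponding scaled quantities for $e^{t\Delta}u_0$ are controlled by $\|u_0\|_{L^p(B)}$ (contribution near $x_0$, using $p>3$) and by $\|u_0\|_{L^{3,\I}}$ (contribution away from $x_0$), again with a polynomial rate. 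Feeding these into the local energy inequality \eqref{ineq:CKN-LEI} and the $\epsilon$-regularity theory of Caffarelli--Kohn--Nirenberg, adapted to the equation for $\td u$ (with the $e^{t\Delta}u_0$-forcing absorbed via the local $L^p$ hypothesis), one obtains --- for $T=T(p,\|u_0\|_{L^{3,\I}},\|u_0\|_{L^p(B)})$ small enough --- that $\td u$ is bounded on $B_{1/4}(x_0)\times(0,T)$ with $\|\td u\|_{L^\I(B_{1/4}(x_0))}(t)\lesssim t^{\ga/2}$, where $\ga=\ga(p)\in(0,1)$ is the H\"older exponent the regularity argument delivers. Combining with the rate $t^{1/2-3/(2p)}$ that the mild formula and $u_0\in L^p(B)$ furnish for the bilinear part of $\td u$ near $x_0$ gives $\|u-P_0\|_{L^\I(B_{1/4}(x_0))}(t)\lesssim t^{a_0}$ with $a_0=\min\{\ga/2,\,1/2-3/(2p)\}$.

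\medskip
\noindent\textbf{Bootstrap.} Assume \eqref{ineq:thrm.main2} at level $k$ (the base-case argument in fact furnishes this bound on a slightly larger ball, which is what the Duhamel integral below will see). From the mild formulation, $u-P_{k+1}=B(u-P_k,u)+B(P_k,u-P_k)$, and we split each $B(f,g)=B^{\near}(f,g)+B^{\far}(f,g)$ according to whether the spatial integration variable lies near $x_0$ or away from it, using the pointwise decay $|x-y|^{-4}$ of the Oseen kernel for large spatial separation. For $B^{\near}$ we exploit $\|e^{(t-s)\Delta}\mathbb P\nb\cdot h\|_{L^\I}\lesssim(t-s)^{-1/2-3/(2p)}\|h\|_{L^p}$ with $h=(u-P_k)\otimes u$ (resp. $P_k\otimes(u-P_k)$), keeping one factor in $L^\I$ near $x_0$, where it carries the inductive smallness $t^{a_k}$, and the other in $L^p$ near $x_0$, where $u$ and $P_k$ have locally bounded norm (by the base case, heat smoothing, and the previous steps); since $p>3$ the ensuing time integral converges and gains a factor $t^{1/2-3/(2p)}$ per step. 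For $B^{\far}$, the kernel is $\lesssim|x-y|^{-4}$ uniformly in $t-s$ for $x\in B_{1/4}(x_0)$ and $y$ away from $x_0$, so the far contribution is dominated by global quantities --- the local-Leray norm of $u$ and the a priori decay $\|\td u\|_{L^2}(s)\lesssim_{u_0}s^{1/4}$ --- and it is these, rather than the local structure of $u_0$, that cap the extraction at $\sigma<3/2$, essentially the scaling exponent at which $L^2_{\loc}$-type control of the data ceases to suffice. Taking the smaller of the two rates, capped at $\sigma$, yields the stated sequence $a_k$ and the stopping index $k_0$ (the least $k$ with $k(1/2-3/(2p))+a_0\ge\sigma$); at $k=k_0$ one has $u=P_{k_0}+\mathcal O(t^{\sigma})$ on $B_{1/4}(x_0)\times(0,T)$.

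\medskip
\noindent\textbf{Main obstacle.} The delicate step is the base case: upgrading the \emph{global} a priori bound \eqref{ineq.BSSdecay} together with the \emph{local} $L^p$ information on $u_0$ into a \emph{quantitative local} regularity statement for $\td u$ as $t\to0^+$. The obstruction is the nonlocal pressure entering \eqref{ineq:CKN-LEI} (and the pressure- and forcing-type terms in the equation for $\td u$): one must split their Riesz-transform representations into contributions from a neighborhood of $x_0$ and its complement, absorbing the former with $u_0\in L^p(B)$ and the latter with $\|u_0\|_{L^{3,\I}}$ and the global energy of $\td u$, and then verify that every scaled quantity entering the $\epsilon$-regularity criterion is below threshold on small cylinders once $T$ is chosen small enough. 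A secondary, essentially bookkeeping, difficulty is propagating the precise powers of $t$ through the near/far decomposition uniformly in $0\le k\le k_0$ so that the local gains compound exactly as in the recursion for $a_k$, with the far part supplying the uniform ceiling $\sigma$.
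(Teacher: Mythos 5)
Your overall strategy — reduce to the family of bounds \eqref{ineq:thrm.main2}, establish a base case for $u-P_0$, and bootstrap via the mild formula with a near/far decomposition whose far part is controlled by the global $L^2$-decay — is the same strategy the paper attributes to \cite{BP2}; see the discussion surrounding \eqref{eq:splitting}. Your algebraic rewriting $u-P_{k+1}=B(u-P_k,u)+B(P_k,u-P_k)$ is equivalent to the symmetric form used there. Two points, however, deserve scrutiny.

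First, the base case. The paper (and its supercritical analogue in Section~\ref{sec.asy}) obtain the rate $t^{\gamma/2}$ for $u-P_0$ near $x_0$ by invoking the Jia--\SVERAK~local smoothing theorem (Theorem~\ref{theorem:JSlocalsmoothing}) as a black box: one compares $u$ to the strong mild solution $U$ of \eqref{eq.ns} generated by a compactly supported localization of $u_0$, concluding $u-U\in C^{\gamma}_{\mathrm{par}}(\overline B_{1/2}\times[0,T])$, and then passes from $U$ to $P_0=e^{t\Delta}u_0$ via an elementary heat-equation estimate (cf.~Lemma~\ref{lemma.heat}). You instead propose to run CKN $\epsilon$-regularity directly on $\td u = u - e^{t\Delta}u_0$, treating $e^{t\Delta}u_0$ as forcing. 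Note that $\|e^{t\Delta}u_0\|_{L^\infty}\sim t^{-1/2}$ is only critically integrable, so verifying that the scale-invariant quantities entering the $\epsilon$-regularity criterion drop below threshold on small cylinders is not a routine matter of ``$T$ small enough'' — it is, essentially, the content of the JS theorem. Your ``Main obstacle'' paragraph correctly identifies this as the delicate step, but the efficient route here is to cite the local smoothing result rather than rebuild it.

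Second, the bootstrap. Your far-field control is stated to rely on the a priori decay $\|\td u\|_{L^2}(s)\lesssim s^{1/4}$, i.e.\ the case $k=0$ of the $L^2$ bound. But in $B^{\mathrm{far}}(u-P_k,u)$ and $B^{\mathrm{far}}(P_k,u-P_k)$ the factor $u-P_k$ is integrated over $y$ away from $x_0$, where no inductive $L^\infty$ bound is available; what is actually needed is $\|u-P_k\|_{L^2}(s)\lesssim s^{1/4}$ for \emph{every} $k$ up to $k_0$. The paper singles this out as the crucial feature of the critical setting: the Picard increments $\|P_k-P_{k-1}\|_{L^2}$ obey the same $t^{1/4}$ bound for every $k$, so that $u-P_k$ inherits the decay of $\td u=u-P_0$. (It is precisely the failure of this uniformity in the supercritical case that forces the reorganization of the argument in Section~\ref{sec.asy}.) As written, your proposal does not supply this; without it, the far-field contribution at step $k\geq 1$ is not controlled and the bootstrap does not close. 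The fix is standard in the critical case — one proves $\|P_k-P_{k-1}\|_{L^2}\lesssim t^{1/4}$ by induction using the bilinear estimate and \eqref{ineq.BSSdecay} — but it must be stated, as it is the load-bearing observation behind the uniform ceiling $\sigma$.
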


	 Short-time asympototic expansions have been examined by Brandolese for small self-similar flows \cite{Brandolese} and by Brandolese and Vigneron for both small (in which case the expansion holds for all times) and large (in which case the data is globally sub-critical and the expansion is up to a finite time) non-self-similar flows \cite{BV}. A follow-up paper by Bae and Brandolese considers the {forced} Navier-Stokes equations \cite{BaeBrandolese}. In \cite{KR}, Kukavica and Ries give an expansion in arbitrarily many terms assuming the solution is smooth. In all of the preceding papers, either the initial data is strong enough to generate smooth solutions (e.g.~it is in a sub-critical class or is small in a critical class) or the solution is assumed to be smooth. Additionally, the terms of the asymptotic expansions depend on $u$.  
	 
	 As discussed earlier, there is considerable evidence that non-unique solutions exist within the $L^{3,\I}$-weak solution class. 
	 Theorem \ref{thrm.main2BP2} has an application to the problem of quantifying the possible severity of non-uniqueness from a point-wise perspective---this compliments \eqref{ineq.BSSdecay} which does the same but from an $L^2$ perspective. 	 Some  philosophically relevant papers on the phenomenon of separation are \cite{DrivasEtAl,PDS,B1,VasseurEtAl,VasseurEtAl2,TBM}.
	 The key to this application is that the asymptotics are uniquely determined by $u_0$. An upper bound on the separation rate can therefore be derived from Theorem \ref{thrm.main2BP2} and the triangle inequality. This is stated in the following theorem which is taken from \cite{BP2}.
	 \begin{theorem}[Separation rate estimate---critical scenario]\label{thrm.mainBP2}
	 	Assume $u_0\in L^{3,\I}$ and is divergence free.  Fix $x_0\in \R^3$. Assume that $u_0|_B\in L^p(B)$ where $B= B_2(x_0)$ and $p\in (3,\I]$. Let $u_1$ and $u_2$ be  $L^{3,\I}$-weak solutions with data $u_0$. Then, there exists $T=T(p,u_0)>0$ so that, for every $\bar \sigma \in (0,3/2)$ and $t\in (0,T)$,
	 	\[
	 	\| u_1-u_2\|_{L^\I (B_{1/4}(x_0))} (t)\lesssim_{ p,\bar \sigma,u_0} t^{\bar \sigma}, 
	 	\] 
	 	where the dependence on $u_0$ is via the quantities $\|u_0\|_{L^p(B)}$ and $\|u_0\|_{L^{3,\I}}$.
	 \end{theorem}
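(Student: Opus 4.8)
The plan is to obtain Theorem~\ref{thrm.mainBP2} as an essentially immediate corollary of Theorem~\ref{thrm.main2BP2}. The crucial observation is that the expansion there is built around the Picard iterate $P_{k_0} = P_{k_0}(u_0)$, which is manufactured only from the initial data $u_0$ and the heat semigroup, and is therefore the \emph{same function} for every $L^{3,\infty}$-weak solution emanating from $u_0$. Hence the nonuniqueness of such solutions can be felt only in the remainder term, and Theorem~\ref{thrm.main2BP2} controls that remainder by a power of $t$; the triangle inequality then does the rest.

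Concretely, I would fix $\bar\sigma \in (0,3/2)$ and apply Theorem~\ref{thrm.main2BP2} twice --- once to $u_1$ and once to $u_2$ --- in both cases with the choice $\sigma = \bar\sigma$, the same $x_0$, and the same ball $B = B_2(x_0)$ on which $u_0|_B \in L^p(B)$. Since the auxiliary quantities produced by that theorem --- the exponent $\ga = \ga(p)$, the integer $k_0$, and the existence time $T = T(p,\|u_0\|_{L^{3,\infty}},\|u_0\|_{L^p(B)})$ --- depend only on $p$, $\bar\sigma$, and the data $u_0$, they coincide in the two applications, as does $P_{k_0}$. Choosing $k = k_0$ in \eqref{ineq:thrm.main2} and recalling $a_{k_0} = \sigma = \bar\sigma$, I obtain, for $i \in \{1,2\}$ and all $t \in (0,T)$,
\[
\| u_i - P_{k_0}\|_{L^\infty(B_{1/4}(x_0))}(t) \lesssim_{p,\bar\sigma,u_0} t^{\bar\sigma}.
\]
Summing these two bounds and applying the triangle inequality gives, for all $t \in (0,T)$,
\EQN{
\|u_1-u_2\|_{L^\infty(B_{1/4}(x_0))}(t) &\le \|u_1-P_{k_0}\|_{L^\infty(B_{1/4}(x_0))}(t) + \|P_{k_0}-u_2\|_{L^\infty(B_{1/4}(x_0))}(t)\\
&\lesssim_{p,\bar\sigma,u_0} t^{\bar\sigma},
}
which is the asserted estimate.

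The hard part --- insofar as there is one --- is a matter of bookkeeping, since all of the analysis is packaged inside Theorem~\ref{thrm.main2BP2}. To be entitled to the stated form of the dependence, i.e.\ that $u_0$ affects the implicit constant only through $\|u_0\|_{L^p(B)}$ and $\|u_0\|_{L^{3,\infty}}$, one must verify that the constant in \eqref{ineq:thrm.main2} is indeed controlled by exactly these two norms --- equivalently, that in the proof of Theorem~\ref{thrm.main2BP2} the estimates on the iterates $P_k$ and on the defect $u - P_k$ are quantified by $\|u_0\|_{L^{3,\infty}}$ and $\|u_0\|_{L^p(B)}$ alone. Granting this, the bound above is uniform over the pair $(u_1,u_2)$, hence over all $L^{3,\infty}$-weak solutions with the common data $u_0$, and the argument is complete.
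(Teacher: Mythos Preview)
Your proposal is correct and matches the paper's own indication: the paper does not give a detailed proof of Theorem~\ref{thrm.mainBP2} (it is cited from \cite{BP2}) but explicitly notes that ``an upper bound on the separation rate can therefore be derived from Theorem~\ref{thrm.main2BP2} and the triangle inequality,'' which is precisely your argument. The observation that $P_{k_0}$ depends only on $u_0$ and is therefore common to $u_1$ and $u_2$ is exactly the ``key'' the paper highlights.
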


	 The proof of Theorem \ref{thrm.main2BP2} in \cite{BP2} is based on re-writing $u-P_{k+1}$ as
	 \EQ{\label{eq:splitting}
	 	u-P_{k+1} 
	 	= B(u-P_{k},u-P_{k}) + B(P_k, u-P_{k})+B(u-P_{k},P_k).
	 } 
	 In sub-critical regimes, estimates for the bilinear operator $B$ come with a positive power of $t$. If $u-P_k$ also has an upper bound involving a positive power of $t$, then these powers stack leading to an improved estimate for $u-P_{k+1}$ compared to that for $u-P_k$---this bootstrapping is sometimes referred to as a ``self-improvement property'' and has been used elsewhere, e.g., in \cite{GIP,Brandolese,AB,BP1,BP2}. As our initial datum is only locally sub-critical, the self-improvement property only applies locally. In other words, if we write 
	 \[
	 B(P_k, u-P_{k}) = B(P_k, (u-P_{k})(1-\chi_{B}))+B(P_k, (u-P_{k})\chi_{B}),
	 \]
	 for an appropriate ball $B$, then the self-improvement argument applies to $B(P_k, (u-P_{k})\chi_{B})$. For the far-field contribution, $B(P_k, (u-P_{k})(1-\chi_{B}))$, the decay property of $L^{3,\I}$-weak solutions,
	 \EQN{\label{higher}
	 \|u-P_k\|_{L^2}\lesssim t^{1/4},
	}
	 is used. This holds for $L^{3,\I}$-weak solutions for all $k\in \N$ and therefore can be used to bound part of each term in the asymptotic expansion. In particular, at each iteration, the far-field part separates at most at the same rate due to the above estimate on $u-P_k$ and this rate ends up being the limiting rate $\sigma$ in Theorem \ref{thrm.main2BP2}. 
	 Note that the exponent on the right-hand side of the estimate for $u-P_k$ is independent of $k$ which is a manifestation of criticality. 
	 
	 Our goal is to adapt these results to the $L^{p,\I}$ case.  There is an obstacle to doing so, however, which is that, while 
	 \[
	 \| u -P_0\|_{L^2}\lesssim t^{\sigma/2},
	 \]
	 this is not the case for $u-P_1$. There would be a degradation in the exponent on the right-hand side and this would only get worse at higher $k$. This is a consequence of supercriticality and is the main obstruction to extending the Proof of Theorem \ref{thrm.main2BP2} to the supercritical setting. To overcome this setback, we find a new way to manage the far-field contribution by building our higher order terms off of the local part of the lower order terms. This will allow us to prove the following theorem.

	 \begin{theorem}[Local asymptotic expansion---supercritical scenario]\label{thrm.asympt} Fix  $2<p<3$.
	 	Assume $u_0\in L^{p,\I}(\R^3)$ is divergence free and, for some ball $B$, $u_0|_{B}\in L^\I(B)$. Let $\sigma=\sigma(p)$ be as in Theorem \ref{thrm.aprioribound} and fix $\delta\in (0,\sigma)$. Let $u$ be an $L^{p,\I}$-weak solution for data $u_0$. Then for any ball $B_\Om\Subset B$, there exists $T=T(B,B_\Omega,\|u_0\|_{L^\I(B)} ,\|u_0\|_{L^{p,\I}(\R^3)} )$ so that  
	 	\[
	 	(u - P_\Omega)(x,t) = O(t^{1+\sigma-\delta}),
	 	\]
	 	for $x\in B_\Omega$ and $t\in (0,T)$ where $P_\Omega$ is uniquely determined by the initial data. 
	 	
	 	\smallskip 
	 	\noindent\emph{(Separation rate estimate---supercritical scenario)} As a consequence, if $v$ is another $L^{p,\I}$-weak solution with data $u_0$, then 
	 	\[
	 	|u - v|(x,t) \lesssim_{u_0,B_\Omega,B} t^{1+\sigma-\delta},
	 	\]
	 	for $x\in B_\Omega$ and $t\in (0,T)$.
	 \end{theorem}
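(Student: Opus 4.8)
\medskip\noindent\textbf{Proof proposal.} The plan is to run the ``self-improvement'' bootstrap of \cite{BP1,BP2}, but — as flagged above — to route the far-field contribution at every stage through the \emph{a priori} controlled quantity $v_0:=u-e^{t\Delta}u_0=u-P_0$ rather than through $u-P_k$, the point being that $\|v_0(t)\|_{L^2}$ is controlled with a power of $t$ independent of the number of iterations, whereas $\|u-P_k\|_{L^2}$ is not.

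First I would fix the geometry: a smooth cutoff $\chi$ with $\chi\equiv1$ on a neighbourhood of $\overline{B_\Omega}$ and $\supp\chi\subset B$, finitely many nested balls $B_\Omega\Subset B^{(N)}\Subset\cdots\Subset B^{(1)}\Subset B$, and adapted cutoffs. Since $u_0|_B\in L^\infty(B)$ and the heat kernel decays rapidly away from $\supp((1-\chi)u_0)$, $P_0=e^{t\Delta}u_0$ is smooth with bounded derivatives on $\overline{B^{(1)}}\times[0,T]$, so all ``local'' objects built from $P_0$ are subcritical. Recall $L^{p,\infty}$-weak solutions are mild, so $v_0=B(u,u)$, and by Theorem \ref{thrm.aprioribound}, $\|v_0(t)\|_{L^2(\R^3)}\lec t^{\sigma/2}$ and $\int_0^t\|\nb v_0\|_{L^2}^2\,ds\lec t^\sigma$; Proposition \ref{prop.LpinftyDecaySpacteTime} supplies the companion space–time bounds needed in the bilinear estimates. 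The remaining preliminary ingredient is a base case: a \emph{local} decay estimate $\|v_0(t)\|_{L^\infty(B^{(1)})}\lec t^{\alpha_0}$ for some $\alpha_0>0$. I would extract this from $v_0=B(u,u)$ by writing $u=\chi u+(1-\chi)u$ and $u=P_0+v_0$ — the compactly supported self-interaction $B(\chi u,\chi u)$ on $B_\Omega$ is controlled using the subcritical smoothing $\|\nb e^{\tau\Delta}\mathbb P(\cdot)\|_{L^\infty}\lec\tau^{-1/2-3/(2r)}\|\cdot\|_{L^r}$ together with the local boundedness of $u$ near $B$, while every remaining term carries a factor $(1-\chi)v_0$ supported away from $\overline{B_\Omega}$, hence is tamed by the spatial decay $\lec(|x-y|+\sqrt\tau)^{-4}$ of the kernel of $e^{\tau\Delta}\mathbb P\nb\cdot$ paired with $\|v_0(s)\|_{L^2}\lec s^{\sigma/2}$.

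I would then define $P_\Omega$ on $B_\Omega$ to be a \emph{finite, suitably localized Picard-type tower} in $P_0$: starting from $P_0$ one adds at each stage the compactly supported bilinear corrections in $P_0$ and in the previously constructed local pieces, inserting cutoffs so that every term is (i) a well-defined function on $B_\Omega$ — the naive iterates $P_0+B(P_0,P_0)+\cdots$ diverge globally precisely because $p<3$, so localization is essential — and (ii) a function of $u_0$ alone (given the fixed cutoffs). On $B_\Omega$ one then expands $u-P_\Omega$ by repeated use of $v_0=B(u,u)$ and $u=P_0+v_0$, splitting every occurrence of $v_0$ into a near part $\chi v_0$ and a far part $v_0^{\far}:=(1-\chi)v_0$. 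The near parts feed the subcritical self-improvement: each application of $B$ to a compactly supported, locally bounded factor gains $t^{1/2-\e}$, so after finitely many steps the near remainders are $O(t^{1+\sigma-\delta})$. The far parts are, at \emph{every} stage, routed back to $v_0^{\far}$ and the \emph{a priori} bound: a remainder quadratic in $v_0^{\far}$ (or with one $v_0^{\far}$ factor and one compactly supported factor), evaluated on $B_\Omega$, is bounded using $|x-y|\ge d:=\operatorname{dist}(B_\Omega,\supp(1-\chi))>0$ and $(|x-y|+\sqrt\tau)^{-4}\lec_d\tau^{-\delta}$ by
\[
\lec_d \int_0^t (t-s)^{-\delta}\,\|v_0(s)\|_{L^2(\R^3)}^2\,ds \lec_d \int_0^t (t-s)^{-\delta} s^{\sigma}\,ds \lec_d t^{1+\sigma-\delta}.
\]
This is exactly where $1+\sigma-\delta$ is produced: the \emph{quadratic} dependence on $v_0^{\far}$ converts the gain $\|v_0\|_{L^2}^2\sim s^\sigma$ into the target power, and $\delta$ is the price of extracting an almost full power of $t$ from the kernel. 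Taking enough iterations that all remainders are $O(t^{1+\sigma-\delta})$ gives the expansion, and since $P_\Omega$ depends only on $u_0$, the triangle inequality $|u-v|\le|u-P_\Omega|+|v-P_\Omega|$ yields the separation estimate.

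The main obstacle is the bookkeeping in this iteration. Every surviving remainder must be made either quadratic in the $L^2$-controlled $v_0^{\far}$ or rich in powers of $t$ from local self-improvement; a term that is merely \emph{linear} in $v_0^{\far}$ multiplied by a non-decaying, $P_0$-type factor would only be $O(t^{1+\sigma/2-\e})$ and would cap the rate. This is the role of ``building the higher-order terms off the local part of the lower-order terms'': the dangerous linear-in-$v_0^{\far}$ pieces must be either resolved into the data-determined tower $P_\Omega$ up to a genuinely higher-order remainder, or converted — by one further expansion of the inner $v_0$ — into pieces that are quadratic in $v_0^{\far}$ or involve a compactly supported factor. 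Arranging all of this simultaneously (keeping $P_\Omega$ finite and data-determined, keeping every tower term convergent on $B_\Omega$ despite the global divergence of the naive iterates, and making $\alpha_0$ large enough for the near self-improvement to reach $1+\sigma-\delta$) is the crux.
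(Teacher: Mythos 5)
Your high-level strategy matches the paper's: define $P_\Omega$ as a \emph{finite, localized} Picard-type tower (the paper's $P_\Omega = P_1 + \tilde P_2$ with $\tilde P_2 = B(P_0, B(P_0,P_0)\chi_1) + B(B(P_0,P_0), P_0\chi_1)$), split every bilinear term with a cutoff into near/far pieces, and make sure every far-field remainder is estimated via the $k$-independent \emph{a priori} bound on $v_0 = u - P_0$ rather than via $u - P_k$, whose $L^2$-decay degrades with $k$ in the supercritical setting. The triangle-inequality deduction of the separation estimate is identical. However, there are two genuine gaps.

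\textbf{Base case.} Your extraction of $\|v_0\|_{L^\infty(B^{(1)})}\lesssim t^{\alpha_0}$ directly from $v_0 = B(u,u)$ presupposes ``local boundedness of $u$ near $B$,'' but this is not a hypothesis and does not follow from $u_0|_B\in L^\infty(B)$ alone: for a weak solution, bounded local data does not trivially give a bounded velocity up to $t=0$. This is exactly the content of the Jia--\v Sver\'ak local smoothing theorem (the paper's Zeroth Step and Lemma \ref{lemma.heat}), which produces $u - P_0 \in C^\gamma_{\mathrm{par}}(B_0\times[0,T])$ and hence the starting estimate $|u-P_0|\lesssim t^{\gamma/2}$ on $B_0$. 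Without invoking local smoothing (or equivalent $\varepsilon$-regularity up to $t=0$ for locally subcritical data), your bootstrap has no valid starting point.

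\textbf{Linear far-field terms.} You correctly flag that a far-field remainder that is \emph{linear} in $v_0^{\mathrm{far}}$ against a $P_0$-type factor is the dangerous case, and that your heuristic $\int_0^t (t-s)^{-\delta} s^{\sigma/2}\,ds$-type estimate would cap the rate at $t^{1+\sigma/2-\epsilon}$. But the proposed fix --- converting to quadratic by expanding the inner $v_0$ once more, or absorbing into the tower --- does not close: expanding $v_0=B(P_0,P_0)+2B(P_0,v_0)+B(v_0,v_0)$ regenerates a linear-in-$v_0$ piece, and the putative data-determined piece $B(P_0(1-\chi),B(P_0,P_0))$ raises its own convergence questions for $p<3$. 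The paper's resolution is Proposition \ref{prop.LpinftyDecaySpacteTime}: the space-time decay $\|u-P_0\|_{L^r(0,T;L^q)} \lesssim T^{\sigma(p)}$ (full power $\sigma$, not $\sigma/2$) for $q$ close to $3/2$, combined with H\"older/HLS on the kernel integral, bounds the linear far-field term $B(P_0(1-\chi_0), u-P_0)$ directly by $t^{1+\sigma-\delta}$ --- this is precisely where the $\delta$-loss originates. Your plan is missing this ingredient, and without it the claimed rate $1+\sigma-\delta$ is not attainable by your route.
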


	 In the preceding theorem the leading order term $P_\Omega$ is the sum of  $ P_1+\td P_2 $ where $P_1$ is the first Picard iterate after $P_0=e^{t\Delta}u_0$ and
	 \[
	 \td P_2 :=   B(P_0,  B(P_0,P_0) \chi_1)+ B( B(P_0,P_0),P_0 \chi_1),
	 \]
	 Here, $\chi_1$ is a cut-off function so that   $B_\Omega \Subset   \supp \chi_1 \Subset B$. Plainly $P_\Omega$ is uniquely determined  by $u_0$ and so the separation rate estimate follows simply from the triangle inequality.  One can prove a weaker statement with $ u_0|_{B}\in L^q(B)$ for $q\in (3,\I)$ to match Theorem \ref{thrm.mainBP2}, but it leads to technicalities which obfuscate the main point. We discuss this further in Remark \ref{remark.generalCase}. 
	 
	 The application to separation rates is only interesting if non-unique solutions turn out to exist. While this is likely the case, it is valuable to find an application where there is no risk the result is vacuous. This is the case for the following application which quantifies how fast two solutions whose initial data agree \textit{locally} can separate locally as a result of far-field differences in the data. It was suggested to the first author by Radu Dascaliuc. 
	 \begin{corollary}\label{cor.differentData} Fix $p\in (2,3)$.
	 	Assume $u_0$ and $v_0$ are in $L^{p,\I}(\R^3)$, are divergence free and satisfy $u_0|_{B}=v_0|_B\in L^\I(B)$ for some ball $B$.  Let $u$ and $v$ be $L^{3,\I}$-weak solutions with data $u_0$ and $v_0$ respectively. Then,  under the assumptions and notation of Theorem \ref{thrm.asympt},
	 	\[
	 	|u-v|(x,t)\lesssim_{u_0,v_0,B_\Om,B}t ,
	 	\]
	 	for $x\in B_\Omega$ and $t\in (0,T)$. Here $T$ has dependencies on both $u_0$ and $v_0$.
	 \end{corollary}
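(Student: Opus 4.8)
The plan is to deduce the bound from Theorem~\ref{thrm.asympt}, applied to each of the two solutions separately, together with a far-field estimate for the difference of the data. First I would apply Theorem~\ref{thrm.asympt} twice, once to $u$ with datum $u_0$ and once to $v$ with datum $v_0$, in both cases using the common balls $B_\Omega\Subset B$, the same $\delta\in(0,\sigma)$, and the same cutoff $\chi_1$. This yields leading profiles $P_\Omega^u$ and $P_\Omega^v$, each uniquely determined by the corresponding datum, and a time $T$ (the smaller of the two, and $T\le1$ after possibly shrinking) with $(u-P_\Omega^u)(x,t)=O(t^{1+\sigma-\delta})$ and $(v-P_\Omega^v)(x,t)=O(t^{1+\sigma-\delta})$ on $B_\Omega\times(0,T)$. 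Since $\delta<\sigma$ the exponent $1+\sigma-\delta$ exceeds $1$, so on $(0,T)$ both remainders are $O(t)$, and the triangle inequality reduces the corollary to the estimate $|P_\Omega^u-P_\Omega^v|(x,t)\lesssim_{u_0,v_0,B_\Omega,B}t$ on $B_\Omega\times(0,T)$.

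For the reduced statement I would exploit that $P_\Omega=P_1+\td P_2$ is a fixed polynomial expression in $P_0=e^{t\Delta}u_0$ built from the bilinear operator $B$ and from multiplication by $\chi_1$, where $B_\Omega\Subset\supp\chi_1\Subset B$. Writing $w_0:=u_0-v_0$ and $\delta P_0:=e^{t\Delta}w_0=P_0^u-P_0^v$ and telescoping, $P_\Omega^u-P_\Omega^v$ becomes a finite sum of multilinear terms, each containing exactly one factor $\delta P_0$, with the remaining slots filled by $P_0^u$, $P_0^v$ and $\chi_1$; it then suffices to bound each such term by $Ct$ on $B_\Omega$. The relevant properties of $\delta P_0$ are that it is globally controlled, $\|\delta P_0(s)\|_{L^{p,\infty}}\le\|w_0\|_{L^{p,\infty}}$ (the heat semigroup is a contraction on $L^{p,\infty}$), while on every $B'\Subset B$ it and all of its spatial derivatives are super-polynomially small in $L^\infty(B')$ as $s\to0^+$, because $w_0$ vanishes on $B$. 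I would also use that, since $u_0|_B,v_0|_B\in L^\infty(B)$, the functions $P_0^u,P_0^v$ are bounded in $L^\infty(B')$ uniformly in $s\in(0,T)$ for each $B'\Subset B$.

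Concretely, fix an intermediate ball $\tilde B$ with $\overline{B_\Omega}\cup\supp\chi_1\Subset\tilde B\Subset B$ and a cutoff $\eta\in C^\infty_c(B)$ with $\eta\equiv1$ on $\tilde B$, and split $\delta P_0=\eta\,\delta P_0+(1-\eta)\delta P_0$. A term carrying the factor $\eta\,\delta P_0$ is $O(t^N)$ for every $N$, since $\eta\,\delta P_0$ is compactly supported and super-polynomially small in the (mixed space-time) norms used in the proof of Theorem~\ref{thrm.asympt}, whose heat and bilinear estimates lose at most a fixed finite power of $t$. A term carrying $(1-\eta)\delta P_0$ has that factor supported in $\R^3\setminus\tilde B$, hence at a fixed positive distance $d'$ from both $B_\Omega$ and $\supp\chi_1$; wherever it enters a copy of $B$ whose output is then restricted to $\supp\chi_1$ or evaluated at a point of $B_\Omega$, either the relevant tensor product vanishes identically (if its partner is supported in $\tilde B$ through a $\chi_1$) or the Oseen kernel $\mathcal O(z,\tau)$ of $e^{\tau\Delta}\mathbb P\nb\cdot$ is integrated only over $\{|z|\ge d'\}$. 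On that region $|\mathcal O(z,\tau)|\lesssim(|z|+\sqrt\tau)^{-4}\le|z|^{-4}$ uniformly in $\tau\ge0$ — the $\tau\to0^+$ singularity of $\mathcal O$ sits at $z=0$ and is excluded — so $\|\mathcal O(\cdot,\tau)\mathbf 1_{\{|\cdot|\ge d'\}}\|_{L^{r,1}}\lesssim_{d',r}1$ uniformly in $\tau$ for every $r>1$. Pairing this, by H\"older in Lorentz spaces, with the product of the remaining factors — bounded uniformly in $s$ by the contraction property and by the local boundedness of $u_0,v_0$ on $B$ — the $s$-integral defining that application of $B$ has a bounded integrand and contributes a single clean power of $t$, while the remaining, non-separated, applications of $B$ in the term cost only non-negative powers of $t$, exactly as in the proof of Theorem~\ref{thrm.asympt}. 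Thus every term is $O(t)$, summation gives the claim, and combining with the first paragraph finishes the proof.

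I expect the main obstacle to be the term-by-term bookkeeping: for each of the finitely many multilinear terms one must check that every occurrence of $(1-\eta)\delta P_0$ is paired against a ``separated'' Oseen kernel (or else forces the term to vanish), and that the single spare power of $t$ gained there is not consumed by the other, non-separated, bilinear estimates within the same term — in particular one must use the local $L^\infty$ bound on $u_0,v_0$, rather than the weaker pointwise bound $\|e^{s\Delta}u_0\|_{L^\infty}\lesssim s^{-3/(2p)}\|u_0\|_{L^{p,\infty}}$, precisely at the nodes where there is no separation. Conceptually this is a decorated version of the far-field estimate already present in the proof of Theorem~\ref{thrm.asympt}, so the new content is modest, but it is the kind of computation where one has to track carefully which norm is used at each node of the expression. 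The variant with $u_0|_B,v_0|_B\in L^q(B)$ for some $q\in(3,\infty)$ would proceed the same way, at the price of the extra technicalities noted in Remark~\ref{remark.generalCase}.
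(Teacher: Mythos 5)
Your proposal is correct and reaches the same conclusion, but the bookkeeping is organized differently from the paper's proof. Both arguments begin by invoking Theorem~\ref{thrm.asympt} twice and reducing, via the triangle inequality and $1+\sigma-\delta>1$, to the estimate $|P_\Omega(u_0)-P_\Omega(v_0)|(x,t)\lesssim t$ on $B_\Omega\times(0,T)$. Beyond that, the paper's treatment is shorter in two ways: (i) it does not telescope $\td P_2$ at all, but simply uses the crude individual bounds $|\td P_2(u_0)|,|\td P_2(v_0)|\lesssim t$ (which already match the target rate); and (ii) for the $P_1$-difference it telescopes the bilinear term, but then performs the near/far split on the \emph{non-difference} factor, writing $B(\delta P_0,P_0(u_0))=B(\delta P_0,P_0(u_0)\chi_{B_1})+B(\delta P_0,P_0(u_0)(1-\chi_{B_1}))$ — the local piece decays super-polynomially because $\delta P_0$ is super-polynomially small on $B_1$, and the far piece is $O(t)$ because the separated spatial integral is bounded in $L^{p/2,\infty}$-duality but does not vanish as $t\to0$. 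You instead telescope the entire $P_\Omega=P_1+\td P_2$, isolate one $\delta P_0$ factor in each multilinear term, and split \emph{that} factor as $\eta\,\delta P_0+(1-\eta)\delta P_0$ with $\eta\in C_c^\infty(B)$, $\eta\equiv1$ near $\supp\chi_1$. Your version produces a few more terms to check (including the disjoint-support observation that kills terms with $(1-\eta)\delta P_0$ paired directly against a $\chi_1$-localized partner), but the key step — the separated Oseen kernel in $L^{r,1}$ paired against an $L^{p/2,\infty}$-bounded integrand producing exactly one power of $t$ — is essentially identical to the paper's far-field calculation. Your decomposition is arguably more systematic and would generalize more cleanly to higher-order profiles; the paper's is minimal because the crude bound on $\td P_2$ already suffices. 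One small point worth making explicit in a write-up: the rapid decay of $\eta\,\delta P_0$ requires $\operatorname{dist}(\supp\eta,\partial B)>0$, which your choice $\eta\in C_c^\infty(B)$ ensures; the constant $r_0$ in the Gaussian tail $s^{-3/2}e^{-r_0^2/(4s)}$ is precisely this distance.
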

	 Using the triangle inequality  and Theorem \ref{thrm.asympt}, proving this will reduce to proving a similar statement for $P_\Om(u_0)-P_\Om(v_0)$ and therefore boils down to an analysis of the heat equation. We will see that the non-local property of the Oseen tensor means that, when comparing, e.g.,~$P_1(u_0)$ and $P_1(v_0)$, the fact that the far-field does not vanish at $t=0$ means that the best one can hope for is a single power of $t$ on the right-hand side. This explains why the unitary power of $t$ is necessary here compared to the power in Theorem \ref{thrm.asympt}.  
	 
	 \subsection{Application 2: Time regularity at a singular time}

	 As remarked by Serrin as far back as the 1960s,   boundedness in $Q=B\times (0,T)$ implies infinite differentiability in $x$ within $Q$ but the pressure ruins things for differentiability in $t$ \cite{Serrin}. It is, however, easy to see that,
	 \[
	 \partial_t u \in L^\I((\epsilon,T]\times B'), 
	 \]
	 from spatial regularity, at least when $u$ is a weak solution in an appropriate class and $B'$ is compactly contained in $B$---see Lemma \ref{lemma.firstorder}. On the other hand, when a mild solution is globally bounded, it is known to become time-analytic \cite{DongZhang} at positive times.
	 In this application we seek orders of time regularity   between these extremes.
	  To do this we must bypass classes which imply boundedness. For example, replacing $L^\I(0,T; L^\I(\R^3))$ with $L^\I(0,T; L^p(\R^3))$ for $p>3$ does not change  anything as any solution in the latter will necessarily be in the former on $\R^3\times (\delta,T)$. Consequently, time analyticity will only break down  in the viscinity of a critical or supercritical  order
	 singularity.   This will be the setting of our result.

	For convenience we switch our perspective to solutions which live on a time interval containing zero on the interior and which have an isolated   singularity at the space-time origin. The most restrictive singularity  is a Type 1  singularity for which a solution satisfies the upper bound
	 \[
	 |u(x,t)|\lesssim 	\frac 1 {|x|+\sqrt{|t|}}.
	 \]
	 Any tighter bound would imply the solution is actually bounded, and hence the solution is time-analytic at the origin. 
	 We presently investigate this scenario (and supercritical versions of it) with the aim of showing that, for every $x\neq 0$, we have   $\partial_t u (x,\cdot) \in C^{0,\gamma}(I)$ for some exponent $\gamma=\gamma(p)\in (0,1)$, which arises from the $L^{p,\I}$-weak solution theory, and an interval $I$ which contains the singular time $t=0$. This amounts to  a higher-order time regularity result at a singular time and at spatial points away from the singularity. We are not aware of any comparable   time-regularity results in the Navier-Stokes literature.

	 \begin{theorem}[Time regularity at a singular time]\label{thrm.time-reg}
	 	Let $p\in (2,3]$ be fixed.
	 	Suppose that $u(x,t)$ is an $L^{p,\I}$-weak solution to \eqref{eq.ns} on $\R^3\times (-\delta, \delta)$ and satisfies
	 	\EQ{\label{ineq.assumption}	
	 	|u(x,t)|\leq \frac {M} {(|x| +\sqrt{|t|})^{3/p}},
	 }
	 	for a constant $M>0$ and a fixed $\delta>0$.
	 	So, there may be  an isolated singularity belonging to $L^{p,\I}$ at $(x,t)=(0,0)$. 
	  Let $\sigma(p)=\frac 1 2 \frac {p-2}{4-p}\in (0,1/2]$.
	 	Additionally assume:
 			\[ \sup_{ t\in (-\delta, \delta)} \|u(\cdot, t)\|_{\dot H^{\frac 3 2 - \frac 3 p}} < M \text{ if } 2<p<3 \text{ or  }  \sup_{ t\in (-\delta, \delta)} \|(-\Delta)^{\frac 1 2 (\frac 3 2 - \frac 3 p)}u(\cdot, t)\|_{L^{2,\I}} < M \text{ if }2<p\leq 3.\]
	 	Then, for $x\neq 0$, we have 
	 	\[
	 	\| u(x,\cdot)\|_{C_t^{1,\sigma/2} (-\delta/4,\delta/4)} \leq C(M,p,|x|),
	 	\]
	 	if $2<p<3$ while 
	 		\[
	 	\| u(x,\cdot)\|_{C_t^{1,(\sigma(p)-)/2} (-\delta/4,\delta/4)} \leq C(M,p,|x|,\sigma(p)-),
	 	\]
	 	if $p=3$.
	 \end{theorem}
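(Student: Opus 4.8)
\medskip
\noindent\textbf{Proof proposal.}

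The plan is to reduce the time-H\"older bound for $\pd_t u$ at a point $x\ne 0$ to a quantitative $L^2(\R^3)$-modulus of continuity in time for $u$ near the singularity, the latter being supplied by the \emph{a priori} bound of Theorem~\ref{thrm.aprioribound}. Since an $L^{p,\I}$-weak solution is a suitable weak solution, the pointwise bound \eqref{ineq.assumption} together with the fact that the only singularity sits at $(0,0)$ gives, by standard interior spatial regularity for suitable weak solutions, that $u$ is smooth in $x$ on $\{\abs{x}\ge r/2\}\times(-\de/2,\de/2)$ with bounds depending on $r,M,p$; applying Lemma~\ref{lemma.firstorder} in this region, which avoids the singularity, $\pd_t u$ is bounded on $\{\abs{x}\ge r\}\times(-\de/4,\de/4)$, so $u(x,\cdot)$ is Lipschitz there. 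Writing the equation as $\pd_t u=\De u-\nb\cdot(u\otimes u)-\nb p$ with $-\De p=\pd_i\pd_j(u_iu_j)$, one checks by bootstrapping this identity and interior regularity that $\De u(x,\cdot)$ and $\nb\cdot(u\otimes u)(x,\cdot)$ are Lipschitz in $t$ on $(-\de/4,\de/4)$: the only second-order pressure term that enters is $\De\nb p=-\nb\pd_i\pd_j(u_iu_j)$, which is \emph{local}, while $\nb\nb p$ evaluated at $x\ne 0$ is finite and bounded once a fixed neighborhood of the origin is split off. Hence the only term in $\pd_t u$ whose time regularity is not immediately Lipschitz is $\nb p(x,\cdot)$, and it suffices to prove $\nb p(x,\cdot)\in C^{0,\si(p)/2}_t(-\de/4,\de/4)$ (with an arbitrarily small loss in the exponent when $p=3$).

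The core estimate is
\[
\norm{u(t)-u(t')}_{L^2(\R^3)}\lesssim_{M,p}\abs{t-t'}^{\si(p)/2},\qquad t,t'\in(-\de/4,\de/4).
\]
By \eqref{ineq.assumption}, $\abs{u(\cdot,s)}\le M\abs{x}^{-3/p}$, so $\norm{u(\cdot,s)}_{L^{p,\I}}\lesssim M$ uniformly in $s\in(-\de,\de)$; consequently, for $t'<t$, the time translate of $u|_{[t',\de)}$ is again an $L^{p,\I}$-weak solution with data $u(t')\in L^{p,\I}$ (restarting preserves Definition~\ref{def.weak}: condition~(4) follows by writing the new nonlinear part at time $t$ as $\td u(t)-e^{(t-t')\De}\td u(t')$ and using $\td u(t')\in L^2$). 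Theorem~\ref{thrm.aprioribound} then gives
\[
\norm{u(t)-e^{(t-t')\De}u(t')}_{L^2(\R^3)}\lesssim_{M,p}(t-t')^{\si(p)/2}+(t-t')^{1/4}\lesssim_{M,p}(t-t')^{\si(p)/2},
\]
where we used $\si(p)/2\le 1/4$ on $(2,3]$. For the remaining term, the elementary bound $\abs{1-e^{-a}}\le\min(1,a)$ gives $\norm{(e^{\tau\De}-I)f}_{L^2}\le\tau^{(3/2-3/p)/2}\norm{f}_{\dot H^{3/2-3/p}}$; since $(3/2-3/p)/2\ge\si(p)/2$ on $(2,3]$, taking $f=u(t')$, $\tau=t-t'$ and invoking the additional hypothesis yields $\lesssim_M(t-t')^{\si(p)/2}$. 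When $p=3$ only the weak-$L^2$ form $\norm{(-\De)^{1/4}u(\cdot,s)}_{L^{2,\I}}<M$ of the hypothesis is available, and the same computation then loses an arbitrarily small power of $\tau$, which is the source of the $(\si(p)-)/2$ exponent in that case.

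Given the core estimate, fix $x$ with $\abs{x}=2r$. Since the pressure is recovered from $u$ by Riesz transforms, $\nb p(x,t)$ is a principal-value integral of $u\otimes u$ against a kernel $K$ that is smooth away from the origin and homogeneous of degree $-4$, so
\[
\nb p(x,t)-\nb p(x,t')=\int_{\abs{y}<r/4}K(x-y)\bkt{(u\otimes u)(y,t)-(u\otimes u)(y,t')}\ind y + R(x;t,t'),
\]
where $R(x;t,t')$ is the corresponding principal-value integral over $\{\abs{y}\ge r/4\}$. On $\{\abs{y}<r/4\}$ one has $\abs{x-y}\ge 3r/4$, so $K(x-y)$ is smooth with $\abs{K(x-y)}\le C(r)$, and since $\abs{u(y,s)}\le M\abs{y}^{-3/p}$ with $6/p<3$ the norm $\norm{u(s)}_{L^2(B_{r/4})}$ is bounded uniformly in $s$; hence the first term is $\lesssim_{M,p,r}\norm{u(t)-u(t')}_{L^2(B_{r/4})}\lesssim_{M,p,r}\abs{t-t'}^{\si(p)/2}$. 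The term $R(x;t,t')$ is Lipschitz in $t$: its $t$-derivative is the principal-value integral of $K(x-y)$ against $\pd_t(u\otimes u)(y,t)$, which converges because $\pd_t(u\otimes u)$ is bounded on compact subsets of $\{\abs{y}\ge r/4\}\times(-\de/4,\de/4)$ and $u,\pd_t u$ decay at spatial infinity (using $u\in L^{p,\I}$, $\pd_t u=\De u-\mathbb P\nb\cdot(u\otimes u)$, and \eqref{ineq.assumption}). Combining, $\nb p(x,\cdot)\in C^{0,\si(p)/2}_t(-\de/4,\de/4)$, hence $\pd_t u(x,\cdot)\in C^{0,\si(p)/2}_t(-\de/4,\de/4)$; together with the Lipschitz continuity of $u(x,\cdot)$ and the bound $\abs{u(x,t)}\le M\abs{x}^{-3/p}$ this is the claim, with the indicated $\e$-loss when $p=3$.

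The main obstacle is the step converting the \emph{a priori} bound --- which only compares $u(t)$ with the heat flow $e^{(t-t')\De}u(t')$ --- into a comparison of $u(t)$ with $u(t')$ itself: the pointwise bound \eqref{ineq.assumption} provides no positive Sobolev regularity of $u(t')$ at the origin, and closing this gap is precisely the role of the extra hypothesis on $\dot H^{3/2-3/p}$ (resp.\ on $(-\De)^{\frac12(3/2-3/p)}u$ in $L^{2,\I}$); correspondingly, the H\"older exponent obtained is the supercritical rate $\si(p)/2$ coming from Theorem~\ref{thrm.aprioribound}. The dependence on the non-locality of the pressure --- a far-flung singularity disrupting the time regularity at an otherwise-regular point --- enters precisely through the near-origin piece of $\nb p(x,\cdot)$, which is the only term that fails to be Lipschitz in time. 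A secondary technical point, needed so that Theorem~\ref{thrm.aprioribound} may be applied with base time $t'$ on either side of (or straddling) the singular time $t=0$, is the verification that restarting an $L^{p,\I}$-weak solution at an interior time again produces such a solution, which uses the uniform bound $\norm{u(\cdot,s)}_{L^{p,\I}}\lesssim M$ from \eqref{ineq.assumption}.
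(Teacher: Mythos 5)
Your proposal follows the paper's overall strategy: establish an $L^2(\R^3)$ modulus of time-continuity for $u$ of order $\sigma(p)/2$ by restarting the $L^{p,\I}$-weak solution theory at an interior time, bounding $u(t)-e^{(t-t')\De}u(t')$ with Theorem~\ref{thrm.aprioribound} and $(e^{(t-t')\De}-I)u(t')$ with the fractional-Sobolev/Lorentz hypothesis, and then feeding this into the singular-integral formula for $\nb p$. The restart verification you give is a correct fleshing-out of a step the paper glosses over (the paper instead introduces an auxiliary base time $t_0=t_1-(t_2-t_1)$; both devices are fine). The heat-flow term is handled exactly as in the paper's Lemma~\ref{lemma.heat.init}, including the $\e$-loss at $p=3$. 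So the core mechanism is the same.

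The gap is in the decomposition of $\nb p(x,t)-\nb p(x,t')$ and the treatment of $R(x;t,t')$. You cut the kernel integral at distance $r/4$ from the \emph{origin}, so $R$ carries both the pole $y=x$ and the region near spatial infinity, and you assert $R$ is Lipschitz in $t$. Two claims underlie this and neither is established: (i) that the principal-value integral of the degree $-4$ kernel against $\pd_t(u\otimes u)(\cdot,s)$ near $y=x$ converges --- for this one needs the mean-zero property of the kernel on spheres \emph{together with a modulus of continuity} of $\pd_t(u\otimes u)$ near $x$ (the paper uses the mean value theorem in $y$ for exactly this; ``bounded on compact subsets'' is not enough for a CZ kernel); and (ii) that $\pd_t u$ decays, or at least is uniformly bounded, on all of $\{|y|\ge r/4\}$, so that the far-field integral converges --- but Lemma~\ref{lemma.firstorder} only gives boundedness of $\pd_t u$ on a fixed compact $B'$, and because $\pd_t u$ contains $\mathbb P\nb\cdot(u\otimes u)$, whose Leray projection is nonlocal, a uniform bound (let alone decay) at spatial infinity does not follow from \eqref{ineq.assumption} without further argument. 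The paper sidesteps (ii) entirely: it cuts at distance $|x|/4$ from $x$, not from the origin, so the far-from-$x$ piece (which contains both the origin and spatial infinity) can be bounded by the \emph{same} $L^2$ a priori estimate you already used, via a Lorentz-space H\"older inequality with the $|x-y|^{-4}$ kernel in $L^{r,2}$; the near-$x$ piece is then genuinely local and the mean-value-theorem argument applies. The fix for your proof is straightforward and brings it in line with the paper: split $R$ further into an annulus around $x$ (local, treated by the PV cancellation and the time-Lipschitz local-term Lemma~\ref{lemma.localterms}) and the remaining far field, and control the latter by the global core estimate $\|u(t)-u(t')\|_{L^2}\lesssim|t-t'|^{\sigma/2}$ together with $u\in L^\I_tL^{p,\I}_x$ and $K(x-\cdot)\chi_{\far}\in L^{r,2}$, exactly as you already did for the $|y|<r/4$ piece. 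Since $\sigma/2\le 1/4<1$, aiming only for $\sigma/2$ (rather than Lipschitz) on the far field costs nothing and avoids the unproved decay of $\pd_t u$.
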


	 The notation $\sigma(p)-$ in the theorem means the result holds for any value strictly less than $\sigma(p)$.
	 The additional assumptions, e.g., about the $\dot H^{3/2-3/p}$-norm, have the same scaling as \eqref{ineq.assumption}. The Sobolev space assumption is only meaningful  in the supercritical regime as, in the critical regime, it would imply regularity so there would be no singularity. This is because $ L^\I_t\dot H^{1/2}_x\subset L^\I_t L^3_x $ indicating it is a regularity class \cite{ESS}.
	 This explains why, when $p=3$, we have included the weaker assumption involving the $L^{2,\I}$ norm. In particular, it  should not   rule out the singularity unless an additional smallness condition is imposed. It turns out that the weaker assumption doesn't impact the final conclusion if $2<p<3$, so the assumption about the $\dot H^{3/2-3/p}$ norm is superfluous---it is included for illustrative purposes  as it is perhaps less exotic than the Lorentz space quantity. 
 
	 The proof reflects the fact that the pressure is responsible for the limited time regularity due to its \textit{nonlocality}. In particular, it is the far-field part of the pressure which can take far-field spatial irregularity and disturb the local time regularity. The   proof applies the $L^{p,\I}$-weak solution theory to control the far-field contribution of the pressure, which leads to the appearance of $\sigma(p)$.  This quantifies the extent to which   far-field effects can disrupt  time regularity locally.

	 \subsection*{Organization} Section \ref{sec.weak} contains the weak solution theory including the proofs of Theorems \ref{thrm.existence}-\ref{thrm.stability}. Section \ref{sec.asy} deals with the asymptotic expansion and Section \ref{sec.reg} deals with time regularity.

         \section{Weak  solutions}\label{sec.weak}
         
         In this section we develop the theory of   $L^{p,\I}$-weak solutions following the approach taken for $L^{3,\I}$-weak solution in \cite{BaSeSv}.
         The endpoint Lorentz spaces $L^{p,\I}$ coincide with the weak-$L^p$ spaces and we forgo a definition of the former. Note that in this paper we do not deal with these spaces at a technical level. The main facts that we need about Lorentz spaces are recorded here.  We make frequent use of the following inequalities where $1\leq p,q \leq \I$,
           \begin{equation}\label{eqn:Young-O'Neil-type}
               \| f * g \|_{L^\infty} \leq \| f \|_{L^q} \| g \|_{L^{p,\infty}}, \quad \frac1p + \frac1q = 1,
           \end{equation}
           and
           \begin{equation}\label{eqn:heat-persistance}
               \| e^{t\Delta} u_0 \|_{L^{p,\infty}} \leq \|u_0\|_{L^{p,\infty}}.
           \end{equation}
           The first is due to O'Neil \cite{ONeil}, see also \cite{DDN}, and the second is a consequence of it which can be found in \cite{BCD}.
          We also have, \EQ{\label{ineq.aaa}
               \| e^{t\Delta} u_0 \|_{L^p} \leq (4\pi t)^{-\tfrac32\left(\tfrac1q - \tfrac1p\right)} \| u_0 \|_{L^{q,\I}}, \text{ }(t>0).
           }
           
           The most important property of Lorentz spaces in our analysis is the following  splitting lemma due to  Barker, Seregin and \SVERAK ~ \cite{BaSeSv}, which follows in the spirit of the work of C.~Calder\'on \cite{Calderon}. See also generalizations to Besov spaces in \cite{AB,Popkin}.
           \begin{lemma}[{Lorentz space decomposition of Barker, Seregin and \SVERAK}]
           
           If \(1 < t < r < s \leq \infty\), then 
           \(L^{r,\infty} \subset L^s + L^t\). In particular, if
           \(u_0 \in L^{r,\infty}\)  and \(\nabla \cdot u_0 = 0\) in
           \(\mathcal{D}'\), then for any \(N > 0\), there exists a decomposition
           \(u_0 = \bar{u}^N_0 + \tilde{u}^N_0\), where \(\bar{u}^N_0\) and
           \(\tilde{u}^N_0\) are divergence free, and \[ 
               \tilde{u}^N_0 \in [C_{c,\sigma}^\infty(\mathbb{R}^3)]^{L^t(\mathbb{R}^3)}, 
               \quad \| \tilde{u}^N_0 \|_ {L^t}^t \leq \frac{C_r}{r-t} \frac{1}{N^{r-t}} \|u_0\|^r_{L^{r,\infty}},
           \] and if \(s < \infty\), \[ 
               \bar{u}^N_0 \in [C_{c,\sigma}^\infty(\mathbb{R}^3)]^{L^s(\mathbb{R}^3)}, 
               \quad \| \bar{u}^N_0 \|_ {L^s}^s \leq \frac{C_s}{s-r} N^{s-r} \|u_0\|^r_{L^{r,\infty}},
           \] otherwise if \(s = \infty\), \[ 
               \bar{u}^N_0 \in [C_{c,\sigma}^\infty(\mathbb{R}^3)]^{L^\infty(\mathbb{R}^3)}, 
               \quad \| \bar{u}^N_0 \|_ {L^\infty} \leq C_s N.
           \]
           Above, $C_{c,\sigma}^\infty(\mathbb{R}^3)$ denotes smooth, divergence free compactly supported vector functions.
           \end{lemma}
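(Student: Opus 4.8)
The argument is the one of Barker, Seregin and \SVERAK\ \cite{BaSeSv}, whose structure I recall. It splits into two parts: a soft distributional estimate producing the quantitative splitting of an arbitrary $u_0\in L^{r,\infty}$ into a sum in $L^s+L^t$, and a construction that upgrades this to a splitting respecting the divergence-free constraint and the prescribed density classes.

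For the first part I would truncate in amplitude. Given $u_0$ and $N>0$, set $f_1:=u_0\,\mathbf{1}_{\{|u_0|>N\}}$ and $f_2:=u_0\,\mathbf{1}_{\{|u_0|\le N\}}$, so $u_0=f_1+f_2$. Expressing $L^q$-norms through the layer-cake formula and inserting the defining bound $|\{|u_0|>\mu\}|\le(\|u_0\|_{L^{r,\infty}}/\mu)^{r}$, one finds $\|f_1\|_{L^t}^t\le\frac{r}{r-t}\,N^{t-r}\|u_0\|_{L^{r,\infty}}^{r}$, the factor $\tfrac1{r-t}$ arising from the tail integral $\int_N^{\I}\mu^{t-r-1}\,d\mu$; likewise $\|f_2\|_{L^s}^s\le\frac{s}{s-r}\,N^{s-r}\|u_0\|_{L^{r,\infty}}^{r}$ when $s<\I$, the factor $\tfrac1{s-r}$ coming from $\int_0^N\mu^{s-r-1}\,d\mu$, and trivially $\|f_2\|_{L^\I}\le N$ when $s=\I$. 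This already yields $L^{r,\infty}\subset L^s+L^t$ with constants of the asserted shape.

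For the divergence-free upgrade when $1<t<\I$ and $1<s<\I$, I would use that the Leray projection $\mathbb{P}$ is bounded on every $L^q$ with $1<q<\I$: put $\tilde u_0^N:=\mathbb{P}f_1$ and $\bar u_0^N:=u_0-\tilde u_0^N=\mathbb{P}f_2$. Both are divergence free, and since $[C_{c,\sigma}^\infty(\R^3)]^{L^q}$ is exactly the space of divergence-free $L^q$ vector fields for $1<q<\I$, both lie in the required classes; the norm bounds are those above times the (exponent-dependent) operator norm of $\mathbb{P}$, which is absorbed into $C_r$, $C_s$.

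The case $s=\I$ is the delicate one, and this is where I expect the real work to be. Here $\mathbb{P}f_2$ need not be bounded, $\mathbb{P}$ being unbounded on $L^\I$; and one cannot replace $f_2$ by a heat- or mollifier-regularization of $u_0$, since that perturbs $u_0$ also on $\{|u_0|\le N\}$, and when $u_0$ oscillates at spatial infinity the resulting remainder fails to lie in $L^t$ with any finite norm. The correct device, in the spirit of C.~Calder\'on \cite{Calderon}, is a divergence-free Calder\'on--Zygmund decomposition at height $\sim N$: take a Whitney (or dyadic stopping-time) family of cubes covering an open set $\Om$ with $\{|u_0|>cN\}\subset\Om$ and $|\Om|\lesssim(\|u_0\|_{L^{r,\infty}}/N)^{r}$ (using $M\colon L^{r,\infty}\to L^{r,\infty}$ for the stopping-time version), replace $u_0$ by a local average on each cube, and correct the divergence cube-by-cube with the Bogovskii operator on a slight dilation of each cube, so that the ``good'' part is divergence free and, by the stopping condition together with the finite overlap of the cubes, bounded by $CN$; after a further spatial truncation and mollification near infinity it lies in $[C_{c,\sigma}^\infty(\R^3)]^{L^\I}$. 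The ``bad'' part is divergence free, supported in $\Om$, and one checks $\|\tilde u_0^N\|_{L^t}^t\lesssim|\Om|\,N^{t}+\|u_0\,\mathbf{1}_{\Om}\|_{L^t}^t\lesssim\frac1{r-t}N^{t-r}\|u_0\|_{L^{r,\infty}}^{r}$, the last step again a layer-cake estimate using $|\Om|\lesssim(\|u_0\|_{L^{r,\infty}}/N)^r$. Carrying out this divergence-free Calder\'on--Zygmund construction together with the scale-invariant Bogovskii estimates so that the constants emerge in the stated form is the main obstacle; the remaining steps are the layer-cake computation of the first part and standard facts about $\mathbb{P}$ and $C_{c,\sigma}^\infty(\R^3)$.
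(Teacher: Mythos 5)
The paper itself gives no proof of this lemma: it is quoted verbatim from Barker, Seregin and \SVERAK~\cite{BaSeSv}, so the comparison is with their argument. Your first two parts reproduce it correctly. The amplitude truncation $u_0=u_0\chi_{\{|u_0|>N\}}+u_0\chi_{\{|u_0|\le N\}}$ together with the layer--cake formula and the distribution-function bound gives exactly the stated constants (the $\tfrac1{r-t}$ and $\tfrac1{s-r}$ factors arise as you say), and composing with the Leray projection $\mathbb P$ --- bounded on $L^q$ for $1<q<\infty$, acting as the identity on divergence-free fields, with $[C^\infty_{c,\sigma}(\R^3)]^{L^q}$ identified with the divergence-free subspace of $L^q$ --- yields the divergence-free splitting whenever $s<\infty$. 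This is the argument of \cite{BaSeSv}, and it covers every instance of the lemma actually invoked in this paper (Theorem \ref{thm:splitting} uses $t=2$ and $s=\alpha\in(3,4]$).

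The case $s=\infty$ is where your proposal has a genuine gap. You correctly diagnose why $\mathbb P$ and global mollification both fail there, but the divergence-free Calder\'on--Zygmund/Bogovskii construction you sketch does not close the case. First, it is only outlined: the averaging step produces a divergence of the good part concentrated on the boundaries of the Whitney cubes, and correcting it cube-by-cube while keeping the bad part in $L^t$ with the stated $N^{t-r}$ dependence is exactly the step you defer as ``the main obstacle.'' Second, and more structurally, the good part of any such decomposition coincides with $u_0$ itself on the complement of the exceptional set $\Omega$, hence is merely a bounded measurable field there; but $[C^\infty_{c,\sigma}(\R^3)]^{L^\infty(\R^3)}$ consists of \emph{continuous} divergence-free fields vanishing at infinity, a ``mollification near infinity'' cannot repair discontinuities in the interior, and a global mollification reintroduces for the remainder precisely the loss of $L^t$ control you already identified. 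So the $s=\infty$ assertion is not established by your argument; since the applications in this paper never use it, the clean options are to restrict the statement to $s<\infty$ or to cite the proof of the endpoint case from \cite{BaSeSv} directly.
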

           For the purposes of dimensional analysis, it is helpful to observe that $N$ has the same dimension as $u_0$.

           \subsection{\textit{A priori} bounds}

           In this subsection we prove Theorem \ref{thrm.aprioribound}. 
           In the following lemma, we quantify the growth of separation of a solution of  \eqref{eq.ns} and the heat equation, in terms of their initial separation and a term appearing as a source due to the difference between the equations. The amount of separation is quantified as the total energy of the difference.
           \begin{lemma}[Energy estimate for subcritical perturbations of $L^{p,\I}$-weak solutions]
           Let \(u\) be an $L^{p,\I}$-weak solution on \([0,T]\) with divergence free data $u_0\in L^{p,\I}$. 
           Let \(w = u - V\), where \(V\) is the caloric extension of \(V_0\), and 
           \(V_0 \in L^4\) and is divergence free. Assume further that $ w_0 = V_0-u_0\in L^2$. Then, for any \(t\in[0,T]\),
           \[ 
               \|w(t)\|_{L^2}^2 +\frac 1 2 \int_0^t   \|\nabla w(\tau)\|_{L^2}^2 d\tau \leq \mu(0,t) \|w_0\|_{L^2}^2 +\mu(0,t) \int_0^t \e_2^{-1} \| V(\tau) \|^4_{L^4}\,d\tau,
           \] 
           where  
            \[
           \mu(s,t) = e^{\frac{C_L^8}{4} \int_s^t \epsilon_1^{-7} \|V\|_{L^4}^8\,d\tau},
           \]
           for  $\epsilon_1 = 3/7$ and $\epsilon_2 = 3/4$.
           \end{lemma}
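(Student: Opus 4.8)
The plan is to treat $w=u-V$ as a distributional solution of the perturbed system
\[
\pd_t w - \De w + \nb p = -\nb\cdot(u\otimes u),\qquad \nb\cdot w = 0,
\]
obtained by subtracting the heat equation $\pd_t V - \De V = 0$ from \eqref{eq.ns}, and then to run a Gr\"onwall argument on $\|w(t)\|_{L^2}^2$. The first step is to establish the energy inequality
\[
\|w(t)\|_{L^2}^2 + 2\int_0^t \|\nb w(\tau)\|_{L^2}^2\,d\tau \;\leq\; \|w_0\|_{L^2}^2 + 2\int_0^t\!\!\int (u\otimes u):\nb w\,dx\,d\tau,
\]
valid for a.e.\ $t\in[0,T]$ and hence, by the weak continuity in item (3) of the definition, for every $t$. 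This is the delicate point, since $u$ has infinite energy and is only an $L^{p,\I}$-weak solution, so one cannot simply pair the equation with $w$. Following the relative-energy derivation of \eqref{ineq:energyIneq} in \cite{SeSv,BaSeSv}, I would use that $u$ is also a mild solution together with the local energy inequality \eqref{ineq:CKN-LEI} and the finiteness \eqref{ineq.BSSbound}: writing $w = \td u + e^{t\De}(u_0-V_0)$ with $\td u = u-e^{t\De}u_0 \in L^\I_t L^2\cap L^2_t\dot H^1$ and $u_0 - V_0 = -w_0\in L^2$, so that $w\in L^\I_tL^2\cap L^2_t\dot H^1$, one tests \eqref{ineq:CKN-LEI} against spatial cutoffs increasing to $1$, uses the classical energy identity for $V$ (which solves the heat equation) to absorb the cross term $u\cdot V$ and the term $|V|^2$, and removes the cutoff using the spatial decay of $u$ and $V$ and the $L^2$ integrability of $w$. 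In contrast to \eqref{ineq:energyIneq}, the right-hand side here is \emph{finite}: the heat semigroup is a contraction on $L^4$ and $V_0\in L^4$, so $V\in L^\I_tL^4$, and then every term below is integrable against $\nb w\in L^2_tL^2$.

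The second step is algebraic. Writing $u=w+V$ and expanding $u\otimes u$, the terms $\int(w\otimes w):\nb w$ and $\int(V\otimes w):\nb w$ vanish by incompressibility (they equal $\tfrac12\int w\cdot\nb|w|^2$ and $\tfrac12\int V\cdot\nb|w|^2$), leaving
\[
2\int (u\otimes u):\nb w \;=\; 2\int (w\otimes V):\nb w + 2\int (V\otimes V):\nb w,
\]
which by H\"older with exponents $(4,4,2)$ is bounded by $2\|w\|_{L^4}\|V\|_{L^4}\|\nb w\|_{L^2} + 2\|V\|_{L^4}^2\|\nb w\|_{L^2}$. Applying the three-dimensional Gagliardo--Nirenberg inequality $\|w\|_{L^4}\leq C_L\|w\|_{L^2}^{1/4}\|\nb w\|_{L^2}^{3/4}$ to the first term, and then Young's inequality to both --- with exponents $(\tfrac87,8)$ in the first term and $(2,2)$ in the second, and with the Young parameters tuned so that the absorbed dissipation is exactly $\tfrac34\|\nb w\|_{L^2}^2$ in each, which is precisely what forces $\epsilon_1=3/7$ (the value of $t^{8/7}$ in the first split) and $\epsilon_2=3/4$ --- yields
\[
2\int (u\otimes u):\nb w \;\leq\; \tfrac32\|\nb w\|_{L^2}^2 + \tfrac{C_L^8}{4}\,\epsilon_1^{-7}\|V\|_{L^4}^8\|w\|_{L^2}^2 + \epsilon_2^{-1}\|V\|_{L^4}^4.
\]
Absorbing $\tfrac32\|\nb w\|_{L^2}^2$ into the $2\|\nb w\|_{L^2}^2$ on the left leaves $\tfrac12\int_0^t\|\nb w\|_{L^2}^2$ and produces the integrated differential inequality $\frac{d}{dt}\|w\|_{L^2}^2 + \tfrac12\|\nb w\|_{L^2}^2 \leq \alpha(t)\|w\|_{L^2}^2 + \beta(t)$ with $\alpha = \tfrac{C_L^8}{4}\epsilon_1^{-7}\|V\|_{L^4}^8$ and $\beta = \epsilon_2^{-1}\|V\|_{L^4}^4$.

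The third step is the integrating-factor form of Gr\"onwall's inequality: multiplying by $e^{-\int_0^\tau\alpha}$, integrating on $[0,t]$, and using $1/\mu(0,t)=e^{-\int_0^t\alpha}\leq e^{-\int_0^\tau\alpha}\leq 1$ together with the definition $\mu(0,t)=e^{\int_0^t\alpha}$ gives
\[
\|w(t)\|_{L^2}^2 + \tfrac12\int_0^t\|\nb w(\tau)\|_{L^2}^2\,d\tau \;\leq\; \mu(0,t)\|w_0\|_{L^2}^2 + \mu(0,t)\int_0^t \epsilon_2^{-1}\|V(\tau)\|_{L^4}^4\,d\tau,
\]
which is the claim. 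I expect the main obstacle to be the rigorous justification of the energy inequality in the first step --- pairing the $u\otimes u$ term against $\nb w$ and discarding the pressure contribution $\int\nb p\cdot w = -\int p\,\nb\cdot w = 0$ --- since $u$ is not known to be smooth and has infinite energy; once that inequality is in hand, the remaining steps are the calibrated H\"older--Gagliardo--Nirenberg--Young--Gr\"onwall computation above.
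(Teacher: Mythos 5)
Your proposal follows essentially the same route as the paper: the relative energy inequality is obtained from the local energy inequality via cutoffs (with the technical justification delegated, exactly as the paper does, to the argument of Barker--Seregin--\v Sver\'ak, Lemma 3.3), the trilinear cancellations reduce the flux to the $V\otimes w$ and $V\otimes V$ contributions, and Ladyzhenskaya plus parametrized Young's inequality with $\epsilon_1 = 3/7,\ \epsilon_2 = 3/4$ leave $\tfrac12\|\nabla w\|_{L^2}^2$ after absorption, with the integral form of Gr\"onwall finishing the proof. The only differences are cosmetic (you write the flux initially as $\int (u\otimes u):\nabla w$ before cancelling, while the paper passes directly to the reduced form, and the parenthetical ``the value of $t^{8/7}$'' is a slightly garbled description of the Young coefficient), so the proposal is correct and matches the paper's proof.
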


           \begin{proof}Observe that $u-e^{t\Delta}u_0 \in L^{\I}(0,T;L^2)\cap L^2(0,T;\dot H^1)$ by assumption. Then, $u-V = u-e^{t\Delta}u_0 + e^{t\Delta}u_0 - V$, implying by our assumptions that $u-V \in L^{\I}(0,T;L^2)\cap L^2(0,T;\dot H^1)$ as well.
           From the equations for \(u\) and \(V\), we have
           \(\nabla \cdot w = \nabla \cdot u - \nabla \cdot V = 0\), and
           \[ \partial_t w - \Delta w + (u \cdot \nabla) u + \nabla p = 0. \]
           We know that $u$ is a suitable weak solution and we know that $V$ satisfies an energy \textit{equality} and a local energy equality. It is possible to deduce from this that $w$ satisfies a local energy inequality. Because $V$ is very regular, this is an easy calculation  and is omitted.

           Formally taking the inner-product of \(w\) with the evolution equation for \(w\) yields,
           \[ \frac12\frac{d}{dt} \|w\|_{L^2}^2 + \|\nabla w\|_{L^2}^2 + \langle (u\cdot\nabla) u, w \rangle = 0. \]
           From the cancellation property of the trilinear form we have
           \begin{align*}
               \langle (u\cdot\nabla) u, w\rangle 
               & 
               = -\langle (w \cdot \nabla)\ w, V\rangle +
                 -\langle (V \cdot \nabla)\ w, V \rangle.
           \end{align*}
           These identities can be integrated in time to yield an energy equality.
           Note that these are obtained formally. However,  they can be made rigorous with the substitution of an inequality for the equality in the ensuing energy inequality by exploiting the local energy inequality on a sequence of test functions which approach $1$ on all of $\R^3$---the details of this argument are worked out in \cite[Lemma 3.3]{BaSeSv}.  To this point, because our perturbation is in $L^\I_t L^4_x$,  \cite[Proof of Lemma 3.3]{BaSeSv} applies here  with no modifications except  that the $L^{3,\I}$ norms on the right-hand side of \cite[(3.28) \and (3.29)]{BaSeSv} are at this stage left as $L^4$ norms applied to the initial data.   In particular, this confirms
           \[
           	\|w(t)\|_{L^2}^2 +2\int_0^t \|\nabla w\|_{L^2}^2 \leq \|w(0)\|_{L^2}^2 + 2\int_0^t\int ( V\otimes w +V\otimes V  ): \nabla w \,dx\,ds.
           \]
           From H\"older's inequality we obtain
           \[
           	 		\|w(t)\|_{L^2}^2 +2\int_0^t \|\nabla w\|_{L^2}^2 \leq \|w(0)\|_{L^2}^2+  2 \int_0^t ( \| w \|_ {L^4} \|\nabla w\|_{L^2} \|V\|_{L^4} + \|V\|_{L^4}^2 \|\nabla w\|_{L^2})\,ds.
          \]
          By 
           Ladyzhenskaya's inequality and Young's inequality we have 
           \begin{align*}
         \| w \|_ {L^4} \|\nabla w\|_{L^2} \|V\|_{L^4} + \|V\|_{L^4}^2 \|\nabla w\|_{L^2}
         &\leq C_L \|w\|_{L^2}^\frac{1}{4} \|\nabla w\|_{L^2}^\frac{7}{4} \|V\|_ {L^4} + \|V\|_{L^4}^2 \|\nabla w\|_{L^2}
         \\&\leq 	\left( \tfrac{7}{8} \epsilon_1 + \tfrac{1}{2} \epsilon_2 \right) \|\nabla w\|_{L^2}^2 
         + \tfrac{C_L^8}{8}\tfrac{1}{\epsilon_1^7} \|w\|_{L^2}^2 \|V\|_{L^4}^8
         + \tfrac1{2\epsilon_2}\|V\|_{L^4}^4.
           \end{align*}
           For any $\delta \in [0,1)$, we can choose $\epsilon_1, \epsilon_2 > 0$ such that
           \[
           	\tfrac78 \epsilon_1 + \tfrac12 \epsilon_2 + \tfrac12 \delta = 1,
           \]
           and therefore,
           \[
           	\|w(t)\|_{L^2}^2 \leq  \|w(0)\|_{L^2}^2+\int_0^t \frac1{\epsilon_2}\|V\|_{L^4}^4  \,ds-\delta \int_0^t \|\nabla w\|_{L^2}^2\,ds +
           		\int_0^t 	\frac{C_L^8}{4}\tfrac{1}{\epsilon_1^7} \|w\|_{L^2}^2 \|V\|_{L^4}^8\,ds.
           \]
           We obtain from Gr\"onwall's inequality that 
              \begin{align*}
           \|w(t)\|_{L^2}^2  &\leq  \|w(0)\|_{L^2}^2+\int_0^t \frac1{\epsilon_2}\|V\|_{L^4}^4  \,ds-\delta \int_0^t \|\nabla w\|_{L^2}^2 \,ds
           \\&+\int_0^t\bigg(		\|w(0)\|_{L^2}^2+\int_0^s \frac1{\epsilon_2}\|V\|_{L^4}^4  \,dr-\delta \int_0^s \|\nabla w\|_{L^2}^2\,dr	\bigg)	\frac{C_L^8}{4}\tfrac{1}{\epsilon_1^7} \|V\|_{L^4}^8\mu(s,t)\,ds,
           \end{align*}
           where  
           \[
           	\mu(s,t) = e^{\frac{C_L^8}{4} \int_s^t \epsilon_1^{-7} \|V\|_{L^4}^8\,d\tau}.
           \]
           Re-arranging and dropping a negative term from the right-hand side results in 
                \begin{align*}
           	\|w(t)\|_{L^2}^2+ \delta \int_0^t \|\nabla w\|_{L^2}^2 \,ds &\leq  \|w(0)\|_{L^2}^2+\int_0^t \frac1{\epsilon_2}\|V\|_{L^4}^4  \,ds
           	\\&+\int_0^t\bigg(		\|w(0)\|_{L^2}^2+\int_0^s \frac1{\epsilon_2}\|V\|_{L^4}^4  \,dr	\bigg)	\frac{C_L^8}{4}\tfrac{1}{\epsilon_1^7} \|V\|_{L^4}^8\mu(s,t)\,ds.
           \end{align*}
           A standard calculation on the right-hand side using the fact that 
           \[
            \|w(0)\|_{L^2}^2+\int_0^t \frac1{\epsilon_2}\|V\|_{L^4}^4  \,ds,
           \]
           is increasing and the fundamental theorem of calculus let's us bound the right-hand side above by
           \[
           \mu(0,t) \|w(0)\|_{L^2}^2 + \int_0^t \epsilon_2^{-1} \mu(0,t) \| V(\tau) \|^4_{L^4}\,d\tau,
           \]
           which leads to our final estimate
           \[ 
               \|w(t)\|_{L^2}^2 + \int_0^t \delta  \|\nabla w(\tau)\|_{L^2}^2 d\tau \leq \mu(0,t) \|w_0\|_{L^2}^2 + \int_0^t \epsilon_2^{-1} \mu(0,t) \| V(\tau) \|^4_{L^4}\,d\tau.
           \] 
           Choosing $\delta = 1/2$, $\epsilon_1 = 3/7$ and  $\epsilon_2 = 3/4$ completes the proof.

           \end{proof}
           
           The previous theorem is framed for a solution of \eqref{eq.ns} and a solution of the heat equation with mostly unrelated initial conditions. In the next result, following the methodology of \cite{BaSeSv}, we examine the separation of the solutions with identical initial conditions. We do so by first separating the initial data into a supercritical and a subcritical part. Then, we apply the previous result to quantify the separation rate for the solution of \eqref{eq.ns} with the full initial condition and the solution of the heat equation with the subcritical initial condition (the separation therefore evolves from the supercritical part of the initial data).
           
           \begin{theorem}\label{thm:splitting}
           Let \(2 < p \leq 3,\) and suppose \(u_0 \in L^{p,\infty}\) is divergence free. Let  \(u\)   satisfy Definition \ref{def.weak} on \([0,T]\). 
           Let \(V\) be the caloric extension of \(u_0\), and let \(w = u - V\). For any \(\alpha \in (3,4],\) there exists a constant, \(C_{\alpha,p}\), such that for all \(t \in [0,T]\) and for any \(N > 0\),
           \begin{align*}
              & \|w(t)\|_{L^2}^2 +\frac 1 2 \int_0^t \|\nabla w\|_{L^2}^2\,ds 
               \\&\leq 
           	C_{\alpha,p}\exp\left( C_{\alpha,p} \|u_0\|_ {L^{p,\infty}}^{\frac{8 p}{\alpha}} N^{\frac{8}{\alpha}(\alpha - p)} t^{\frac{4\alpha - 12}{\alpha}} \right)
           	    \left[
           		\|u_0\|^{p}_ {L^{p,\infty}}\frac{1}{N^{ {p-2}}}
           		+ 
           		\|u_0\|_ {L^{p,\infty}}^{\frac{4p}\alpha} N^{\frac{4}{\alpha}(\alpha-p)} t^{\frac{5\alpha - 12}{2\alpha}}
           	    \right].
           \end{align*}
           \end{theorem}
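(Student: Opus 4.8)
The plan is to run the Calder\'on-type splitting of \cite{BaSeSv}: extract from $u_0$ a subcritical part whose caloric extension the preceding (subcritical-perturbation) lemma can absorb, plus an $L^2$ remainder, and estimate the two pieces separately. Fix $N>0$ and apply the Lorentz space decomposition lemma with $r=p$, small exponent $2$ and large exponent $\alpha$ (legitimate since $1<2<p\le 3<\alpha\le 4$), writing $u_0=\bar u^N_0+\tilde u^N_0$ with both pieces divergence free and
\[
\|\tilde u^N_0\|_{L^2}^2\le \frac{C_p}{p-2}\,\frac{\|u_0\|_{L^{p,\infty}}^p}{N^{p-2}},\qquad \|\bar u^N_0\|_{L^\alpha}^\alpha\le \frac{C_\alpha}{\alpha-p}\,N^{\alpha-p}\|u_0\|_{L^{p,\infty}}^p .
\]
Since the decomposition is the canonical one (truncation of $u_0$ at height $N$ followed by the Leray projection) and $p<4$, the same $\bar u^N_0$ also lies in $L^4$, so it is an admissible choice of $V_0$ in the perturbation lemma, with remainder datum $\bar u^N_0-u_0=-\tilde u^N_0\in L^2$.

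Put $\bar V:=e^{t\Delta}\bar u^N_0$ and $\bar w:=u-\bar V$. The perturbation lemma gives $\|\bar w(t)\|_{L^2}^2+\tfrac12\int_0^t\|\nb\bar w\|_{L^2}^2\,ds\le \mu(0,t)\big(\|\tilde u^N_0\|_{L^2}^2+\epsilon_2^{-1}\int_0^t\|\bar V(\tau)\|_{L^4}^4\,d\tau\big)$ with $\mu(0,t)=\exp\big(c\int_0^t\|\bar V\|_{L^4}^8\,d\tau\big)$ for an absolute constant $c$. Now use heat smoothing, $\|\bar V(\tau)\|_{L^4}\le C\tau^{-\gamma}\|\bar u^N_0\|_{L^\alpha}$ with $\gamma=\tfrac32(\tfrac1\alpha-\tfrac14)=\tfrac{3(4-\alpha)}{8\alpha}$ (a genuine gain because $\alpha\le 4$); then
\[
\int_0^t\|\bar V\|_{L^4}^4\,d\tau\le \frac{C}{1-4\gamma}\|\bar u^N_0\|_{L^\alpha}^4\, t^{\,1-4\gamma},\qquad \int_0^t\|\bar V\|_{L^4}^8\,d\tau\le \frac{C}{1-8\gamma}\|\bar u^N_0\|_{L^\alpha}^8\, t^{\,1-8\gamma},
\]
where $1-4\gamma=\tfrac{5\alpha-12}{2\alpha}$ and $1-8\gamma=\tfrac{4\alpha-12}{\alpha}$ are both positive exactly when $\alpha>3$ — this is what pins down the range $\alpha\in(3,4]$ and guarantees $\mu(0,t)<\infty$. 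Inserting $\|\bar u^N_0\|_{L^\alpha}^m\le C\,N^{\frac m\alpha(\alpha-p)}\|u_0\|_{L^{p,\infty}}^{mp/\alpha}$ for $m=4,8$ yields $\mu(0,t)\le C_{\alpha,p}\exp\big(C_{\alpha,p}\|u_0\|_{L^{p,\infty}}^{8p/\alpha}N^{\frac8\alpha(\alpha-p)}t^{\frac{4\alpha-12}{\alpha}}\big)$ and shows that the bracketed right-hand side of the asserted inequality dominates $\|\tilde u^N_0\|_{L^2}^2+\epsilon_2^{-1}\int_0^t\|\bar V\|_{L^4}^4\,d\tau$.

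Finally pass from $\bar w$ to $w=u-e^{t\Delta}u_0=\bar w-e^{t\Delta}\tilde u^N_0$: by the triangle inequality, $L^2$-contractivity of $e^{t\Delta}$, and $\int_0^t\|\nb e^{s\Delta}\tilde u^N_0\|_{L^2}^2\,ds=\tfrac12(\|\tilde u^N_0\|_{L^2}^2-\|e^{t\Delta}\tilde u^N_0\|_{L^2}^2)\le\tfrac12\|\tilde u^N_0\|_{L^2}^2$, one gets $\|w(t)\|_{L^2}^2+\tfrac12\int_0^t\|\nb w\|_{L^2}^2\,ds\le 2\big(\|\bar w(t)\|_{L^2}^2+\tfrac12\int_0^t\|\nb\bar w\|_{L^2}^2\,ds\big)+\tfrac52\|\tilde u^N_0\|_{L^2}^2$. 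Combining this with the previous paragraph and absorbing the finitely many $(p,\alpha)$-dependent constants ($\tfrac1{p-2}$, $\tfrac1{\alpha-p}$, $\tfrac1{1-4\gamma}$, $\tfrac1{1-8\gamma}$, $\epsilon_2^{-1}$, $c$, and the O'Neil/smoothing constants) into $C_{\alpha,p}$ gives the claimed estimate for every $N>0$ and $t\in[0,T]$.

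The only step that is more than H\"older--Young bookkeeping is the time-integrability of $\int_0^t\|\bar V\|_{L^4}^8\,d\tau$, which forces $\alpha>3$ even though we are given only $p\le 3$: the naive choice $e^{t\Delta}u_0$ in place of $\bar V$ would make this eighth-power integral diverge when $p$ is close to $3$, and it is precisely the subcritical gain $e^{t\Delta}\colon L^\alpha\to L^4$ with $\alpha>3$ — purchased by the splitting — that repairs it. Once that is in hand, matching the exact powers $\tfrac{4\alpha-12}{\alpha}$ and $\tfrac{5\alpha-12}{2\alpha}$ is just the computation of $1-8\gamma$ and $1-4\gamma$.
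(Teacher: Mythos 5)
Your proof is correct and follows essentially the same route as the paper: decompose $u_0=\bar u^N_0+\tilde u^N_0$ with the Lorentz splitting lemma (small exponent $2$, large exponent $\alpha$), apply the subcritical-perturbation energy lemma with $V_0=\bar u^N_0$, run the heat-smoothing estimate $\|e^{\tau\Delta}\bar u^N_0\|_{L^4}\lesssim \tau^{-3(\frac1\alpha-\frac14)/2}\|\bar u^N_0\|_{L^\alpha}$ to compute the two time integrals, and then recover $w=u-e^{t\Delta}u_0$ from $\bar w$ by the triangle inequality together with the heat-equation energy identity for $e^{t\Delta}\tilde u^N_0$. The computations of $1-4\gamma=\tfrac{5\alpha-12}{2\alpha}$ and $1-8\gamma=\tfrac{4\alpha-12}{\alpha}$, and the observation that $\alpha>3$ is forced by the eighth-power integral, match the paper exactly. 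One small remark: you justify applying the perturbation lemma (whose hypothesis is $V_0\in L^4$) by appealing to the internal structure of the splitting lemma (truncation at height $N$, then Leray projection) to argue $\bar u^N_0\in L^4$; the paper's lemma as stated only asserts $\bar u^N_0\in L^\alpha$, so your step goes slightly beyond the lemma's literal conclusion. This is a fair and correct observation (and the paper itself is implicitly using the same fact, since it applies the lemma with $\alpha\in(3,4]$ without further comment), but it would be cleaner either to note that the lemma's construction gives $\bar u^N_0\in L^q$ for all $q\in(p,\infty)$, or to observe that the perturbation lemma's proof really only needs $\int_0^t\|V\|_{L^4}^4$ and $\int_0^t\|V\|_{L^4}^8$ finite, which $\bar u^N_0\in L^\alpha$ with $\alpha>3$ already guarantees.
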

           
           \begin{proof}
           Decompose \(u_0\) as in the Lorentz space decomposition lemma,
           \(u_0 = \bar{u}^N_ 0 + \tilde{u}^N_ 0 \in L^\alpha + L^2 \),
           where \[ 
               \| \bar{u}^N_ 0 \|_ {L^\alpha}^\alpha \leq \frac{C_\alpha}{\alpha-p} N^{\alpha-p} \|u_0\|^p_{L^{p,\infty}}
               , \quad
               \| \tilde{u}^N_ 0 \|_ {L^2}^2 \leq \frac{C_p}{p-2} \frac{1}{N^{p-2}} \|u_0\|^p_{L^{p,\infty}}.
           \]
           Let \(V^N\) denote the caloric extension of \(\bar{u}^N_ 0\) (the sub-critical part of the initial condition), and let \(w^N\) be the perturbation: 
           \[ w^N = u - V^N. \] 
           Then \(w^N(0) = u_0 - \bar{u}^N_0 = \tilde{u}^N_0\), 
           and from the previous lemma, we have  
           \begin{multline*}
               \|w^N(t)\|_{L^2}^2 +\frac 1 2 \int_0^t \|\nabla w^N\|_{L^2}^2\,ds \leq \mu(0,t) \|\tilde{u}^N_ 0\|_{L^2}^2 +\mu(0,t) \int_0^t \epsilon_2^{-1} \| V^N(\tau)\|^4_4\,d\tau
               \\
               \leq \mu(0,t) \frac{C_p}{p-2} \frac{1}{N^{p-2}} \|u_0\|_ {L^{p,\infty}}^p +\mu(0,t) \int_0^t \epsilon_2^{-1}  \| V^N(\tau)\|^4_4\,d\tau.
           \end{multline*}
       From \eqref{ineq.aaa}, we have 
           \[ 
               \| V^N(\tau) \|_ 4 
           	\leq (4\pi \tau)^{-\frac32(\frac1\alpha - \frac14)} \| \bar{u}^N_ 0\|_ \alpha  
           	\leq (4\pi \tau)^{-\frac32(\frac1\alpha - \frac14)} 
           		\left(\frac{C_\alpha}{\alpha-p} N^{\alpha-p} \|u_0\|^p_{L^{p,\infty}}\right)^{\frac{1}{\alpha}}.
           \]
           Therefore, 
           \[
               \mu(0,t) \leq \bar{\mu}(0,t) := \exp\bigg(C_{\alpha,p}
           		N^{\frac{8}{\alpha}(\alpha-p)} \|u_0\|^\frac{8p}{\alpha}_{L^{p,\infty}}
           	\int_0^t \epsilon_1^{-7} \tau^{-\frac{12}\alpha + 3}\,d\tau\bigg),
           \]
           and
           \begin{align*}
             &  \|w^N(t)\|_{L^2}^2 +\frac 1 2 \int_0^t \|\nabla w^N\|_{L^2}^2\,ds
           \\&	\leq \bar{\mu}(0,t) \tfrac{C_p}{p-2} \frac{1}{N^{p-2}} \| u_ 0 \|_ {L^{p,\infty}}^p 
           	+ C_{\alpha,p} N^{\frac{4}{\alpha}(\alpha-p)} \mu(0,t)\|u_0\|^{\frac{4p}{\alpha}}_{L^{p,\infty}}
           	\int_0^t \epsilon_2^{-1} \tau^{-6(\frac1\alpha - \frac14)} \,d\tau
           	\\&
           	\leq \bar{\mu}(0,t) \tfrac{C_p}{p-2} \frac{1}{N^{p-2}} \| u_ 0 \|_ {L^{p,\infty}}^p 
           	+ \bar{\mu}(0,t) C_{\alpha,p} N^{\frac{4}{\alpha}(\alpha-p)} \|u_0\|^{\frac{4p}{\alpha}}_{L^{p,\infty}} 
           	\int_0^t \epsilon_2^{-1} \tau^{-6(\frac1\alpha - \frac14)} \,d\tau.
           \end{align*}
The time integrals converge because \( \alpha \in (3,4] \). In particular,
           \[
               \int_0^t \epsilon_1^{-7} \tau^{-\frac{12}\alpha + 3}\,d\tau
               = \epsilon_1^{-7} \tfrac{\alpha}{4\alpha - 12} t^{\frac{4\alpha - 12}{\alpha}}
           \quad \text{and}\quad
               \int_0^t \epsilon_2^{-1} \tau^{-6(\frac1\alpha - \frac14)} \,d\tau
               = \epsilon_2^{-1} \tfrac{2\alpha}{5\alpha - 12} t^{\frac{5\alpha - 12}{2\alpha}}.
           \]
           Therefore,  
           \[
           	\bar{\mu}(0,t) = \exp\big(\epsilon_1^{-7} C_{\alpha,p}
           		N^{\frac{8}{\alpha}(\alpha-p)} \|u_0\|^\frac{8p}{\alpha}_{L^{p,\infty}}
           		t^{\frac{4\alpha - 12}{\alpha}}
           	\big),
           \]
           and
           \[
               \|w^N(t)\|_{L^2}^2 +\frac 1 2 \int_0^t \|\nabla w^N\|_{L^2}^2\,ds
           	\leq C_{\alpha,p}\, \bar{\mu}(0,t) \left( 
           		\| u_ 0 \|_ {L^{p,\infty}}^p \frac{1}{N^{p-2}} 
           		+ \epsilon_2^{-1} \|u_0\|^{\frac{4p}{\alpha}}_{L^{p,\infty}}  N^{\frac{4}{\alpha}(\alpha-p)} t^{\frac{5\alpha - 12}{2\alpha}}
           	\right),
           \]
           where we redefine \(C_{\alpha,p}\) appropriately, and note that \(C_{\alpha,p}\to\infty\) as \(\alpha \to 3\) and \(C_{\alpha,p} \to \infty\) as \( p \to 2 \).
           
           Now, let \(W^N\) be the caloric extension of \(\tilde{u}^N_0\). By the
           linearity of the heat equation, \[ w = u - V = u - V^N - W^N = w^N - W^N. \]
           Therefore, 
           \EQN{ \|w(t)\|_{L^2}^2 +\frac 1 2 \int_0^t \|\nabla w\|_{L^2}^2\,ds&\leq2 \|w^N(t)\|_{L^2}^2 +2 \int_0^t \|\nabla w^N\|_{L^2}^2\,ds
           	\\&\quad + 
           2\|W^N(t)\|_{L^2}^2+2 \int_0^t \|\nabla W^N\|_{L^2}^2\,ds. }
           {By energy methods for the heat equation}  and the Lorentz space decomposition,
           \[ 
               \|W^N(t)\|_{L^2}^2 +\frac 1 2 \int_0^t \|\nabla W^N\|_{L^2}^2\,ds
           	\leq \|\tilde{u}^N_0\|_{L^2}^2
           	\leq \left(\frac{C_p}{p-2}\right)  \frac{1}{N^{ {p-2}}} \|u_0\|^p_ {L^{p,\infty}}. 
           \]
           Therefore,   \[
               \|w(t)\|_{L^2}^2+\frac 1 2 \int_0^t \|\nabla w\|_{L^2}^2\,ds \leq 
           	C_{\alpha,p}\, \bar{\mu}(0,t) 
           	    \left[
           		\|u_0\|^p_ {L^{p,\infty}} \frac{1}{N^{{p-2}}}
           		+ \epsilon_2^{-\frac12}\|u_0\|_ {L^{p,\infty}}^{\frac {4p}\alpha} N^{\frac{4}{\alpha}(\alpha-p)} t^{\frac{5\alpha - 12}{2\alpha}}
           	    \right].
           \]
           where we again redefine \(C_{\alpha,p}\) appropriately.  
           \end{proof}

           \begin{corollary}\label{cor:apriori-bound}
           Under the same conditions as in Theorem~\ref{thm:splitting},
          we have that, for all $t>0$,
          \[
          \|w(t)\|_{L^2}^2 + \frac12 \int_0^t \|\nabla w\|_{L^2}^2 \,ds\leq C_{p}   \left[
          \|u_0\|^\frac{2p}{4-p}_ {L^{p,\infty}}  t^{\frac 1 2 \frac {p-2}{4-p}}
          +  t^{\frac 1 2}
          \right].
          \]
           \end{corollary}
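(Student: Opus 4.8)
The plan is to optimize the estimate of Theorem~\ref{thm:splitting} over its two free parameters $\alpha\in(3,4]$ and $N>0$. I would fix $\alpha=4$ at the outset. This is the right choice because, after the optimization in $N$ described below, the power of $t$ multiplying the first bracketed term will be $\frac{(\alpha-3)(p-2)}{2(\alpha-p)}$, and since $p<3$ the map $\alpha\mapsto\frac{\alpha-3}{\alpha-p}$ is strictly increasing on $(3,4]$; thus $\alpha=4$ maximizes this power and produces exactly $\sigma(p)=\tfrac12\frac{p-2}{4-p}$. Fixing $\alpha=4$ also keeps the constant $C_{4,p}$ finite and dependent only on $p$. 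With $\alpha=4$, Theorem~\ref{thm:splitting} reads
\[
\|w(t)\|_{L^2}^2+\tfrac12\int_0^t\|\nabla w\|_{L^2}^2\,ds
\leq C_{4,p}\exp\!\left(C_{4,p}\|u_0\|_{L^{p,\infty}}^{2p}N^{2(4-p)}t\right)
\left[\|u_0\|_{L^{p,\infty}}^{p}N^{-(p-2)}+\|u_0\|_{L^{p,\infty}}^{p}N^{4-p}t\right].
\]

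Next I would choose $N$ so as to neutralize the exponential. Assuming $u_0\neq 0$ (the case $u_0\equiv 0$ forces $w\equiv 0$ and the claim is trivial), set
\[
N=\left(C_{4,p}\,\|u_0\|_{L^{p,\infty}}^{2p}\,t\right)^{-\frac{1}{2(4-p)}},
\]
which is admissible since the Lorentz decomposition lemma applies for any $N>0$; with this choice the exponential equals $e$. Substituting, $N^{-(p-2)}=\left(C_{4,p}\|u_0\|_{L^{p,\infty}}^{2p}t\right)^{\frac{p-2}{2(4-p)}}$, so the first bracketed term contributes a $p$-dependent constant times $\|u_0\|_{L^{p,\infty}}^{\,p+\frac{p(p-2)}{4-p}}\,t^{\sigma(p)}$; the exponent on the norm collapses to $\frac{p[(4-p)+(p-2)]}{4-p}=\frac{2p}{4-p}$, as desired. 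Likewise $N^{4-p}=\left(C_{4,p}\|u_0\|_{L^{p,\infty}}^{2p}t\right)^{-1/2}$, so the second bracketed term contributes a $p$-dependent constant times $t^{1/2}$, with no dependence on $\|u_0\|_{L^{p,\infty}}$. Absorbing $C_{4,p}$, the factor $e$, and the powers of $C_{4,p}$ into a single constant $C_p$ gives precisely
\[
\|w(t)\|_{L^2}^2+\tfrac12\int_0^t\|\nabla w\|_{L^2}^2\,ds\leq C_p\left[\|u_0\|_{L^{p,\infty}}^{\frac{2p}{4-p}}t^{\sigma(p)}+t^{1/2}\right],
\]
valid for all $t>0$ (the finite horizon $T$ in Theorem~\ref{thm:splitting} being arbitrary and $L^{p,\infty}$-weak solutions being global).

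I do not anticipate a real obstacle: all the analytic work is already contained in Theorem~\ref{thm:splitting}, and this corollary is simply the optimization step. The only two points needing care are the monotonicity check that singles out $\alpha=4$ (a one-line computation using $p<3$) and the bookkeeping of the powers of $\|u_0\|_{L^{p,\infty}}$ so that they collapse to $\frac{2p}{4-p}$. One could instead carry $\alpha$ along and specialize only at the end, but committing to $\alpha=4$ immediately is cleanest; similarly, one could choose $N$ to merely bound the exponential by an absolute constant rather than setting its argument equal to $1$, which changes nothing in the conclusion.
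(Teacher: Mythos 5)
Your proposal is correct and follows essentially the same route as the paper: fix $\alpha=4$, choose $N$ so that the exponential becomes a constant, and verify that the powers of $t$ and of $\|u_0\|_{L^{p,\infty}}$ collapse to the advertised form. Your choice of $N$ differs from the paper's only by a harmless constant factor involving $C_{4,p}$, and your brief monotonicity argument showing that $\alpha=4$ is the optimal endpoint is a nice clarification that the paper leaves implicit.
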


           This estimate is dimensionally balanced. Note that for short periods of time the first power of $t$ dominates while, for large times, the second power of $t$ dominates.  Also, this is essentially a restatement of Theorem \ref{thrm.aprioribound} so, in proving it, we will also be proving Theorem \ref{thrm.aprioribound}.  
          
          \begin{remark}
          	 \label{rem:long-time}
           Corollary~\ref{cor:apriori-bound} is written to emphasize the case $t\to0$. For $1\lesssim t$ bounded away from $0$, we can get an improved rate as $t\to\infty$, namely,
           $$ \|w(t)\|_{L^2}^2 + \frac12 \int_0^t \|\nabla w\|_{L^2}^2\,ds \leq C_{p} \|u_0\|_{L^{p,\infty}}^{p} t^{\frac{p-2}{2}},$$
           by choosing $N$ appropriately and $\alpha = 4$. We do not have an application in mind for this but believe it may be useful. In the limit $p\to 2^+$, it is consistent with the energy estimate for Leray weak solutions (although one would consider $L^2$ and not $L^{2,\I}$ in that case).    Intuitively, this improvement is because the low modes are suppressed by membership in  $L^{p,\I}$ and, as these are the modes that contribute to long-time activity, the long-time error is consequently weaker.
             \end{remark}
           
           \begin{proof}[Proof of Corollary \ref{cor:apriori-bound} and Theorem \ref{thrm.aprioribound}.]
           	We take 
           	\[
           	N = t^{\frac {12-4\alpha} {8(\alpha-p)}} \|u_0\|_{L^{p,\I}}^{\beta},
           	\]
           	where 
           	\[
           	\beta = \frac {p}{3-p}\bigg(-1 - \frac {3-\alpha} {\alpha-p} \bigg) = \frac p {p-\alpha},
           	\]
           	is introduced to keep the dimensions correct. To clarify this, note that $N$ has the same dimension as $w$ and $u_0$. Then, taking the dimension of $t$ to be the length scale   squared, we get that the right-hand side is a length scale to the power
           	\[
           	\frac {12-4\alpha} {4(\alpha-p)} + \frac p {p-\alpha} \frac {3-p}p = \frac {\alpha-p} {p-\alpha},
           	\]
           	which matches the left-hand side.
           	This choice ensures that $\bar \mu(0,t)$ is independent of $N$, $t$ and $\|u_0\|_{L^{p,\I}}$. With this choice and taking $\alpha=4$, we have that
           	\[
           	N^{-(p-2)} = t^{\frac 1 2 \frac {p-2}{4-p}} \|u_0\|_{L^{p,\I}}^{-p\frac {p-2}{p-4}},
           	\]
           	and 
           	\[
           	N^{\frac 4 \alpha (\alpha-p)}t^{\frac{5\alpha-12}{2\alpha}} = t^{\frac 1 2} \|u_0\|_{L^{p,\I}}^{-p }.
           	\]
           	The corollary follows.

           \end{proof}

           \subsection{Stability and existence} 
           In this section we prove Theorems \ref{thrm.stability} and \ref{thrm.existence}.
           \begin{proof}[Proof of Theorem \ref{thrm.stability}]     Let $u_0^{(k)}$ be given as in Theorem \ref{thrm.stability} with weak-star limit $u_0$. Let $M = \sup_{k\in \N} \|u_0^{(k)}\|_{L^{p,\I}}$. Let $V^{(k)}$ and  $V$ be the caloric extensions of $u_0^{(k)}$ and $u_0$.  It is straightforward to see that $V^{(k)}\to V$ in the sense of distributions---see \cite[Proposition 2.5]{BaSeSv}. Fix $T\in (0,\I)$. We presently  find a convergent subsequence  on $\R^3\times (0,T)$ and then can pass to further subsequences to get the result for larger $T$ and, by extension, for all of $\R^3\times (0,\I)$.

           Let $w^{(k)}= u^{(k)}-V^{(k)}$. Then, 
           \[
           		\|w^{(k)}\|_{L^2}^2(t)+\int_0^t \|\nabla w^{(k)}\|_{L^2}^2 \,ds \leq C_{p}   \left[
           	M^\frac{2p}{4-p}  T^{\frac 1 2 \frac {p-2}{4-p}}
           		+  T^{\frac 1 2}
           		\right].
           \]
            which provides a uniform bound on our sequence. 
            We additionally have for $B(n)=B(0,n)$ that
            \[
            \partial_t w^{(k)}\in L^{4/3}(0,T;H^{-1}(B(n))),
            \]
            with a uniform bound. Let us check this. Let $\phi\in C_c^\I(B(n)\times (0,T))$. We have 
            \[
            \langle \partial_t w^{(N)}, \phi \rangle =- \langle \nabla w^{(k)},\nabla \phi \rangle - \langle w^{(k)}	\cdot \nabla w^{(k)}	 +V^{(k)}	\cdot \nabla w^{(k)}+w^{(k)}	\cdot \nabla V^{(k)}+V^{(k)}	\cdot \nabla V^{(k)}	\cdot \	, \mathbb P \phi	\rangle.
            \]
            We have 
            \[
            |\langle \nabla w^{(k)},\nabla \phi \rangle|\leq C \| w^{(k)}\|_{\dot H^1} \| \phi\|_{ H^1}.
            \]
            Furthermore, by the Ladyzhenskaya inequality and the divergence free condition,
            \[
            | \langle w^{(k)}	\cdot \nabla w^{(k)},\mathbb P\phi	\rangle |\leq  C(n) \|w^{(k)}\|_{L^2}^{1/2}\|\nabla w^{(k)}\|_{L^2}^{3/2} \|  \phi\|_{ H^1}.
            \]
           This is all standard so far. The perturbed terms are new compared to the classical theory, but they are \textit{more} regular than $w^{(k)}$, so bounds are not an issue here. To confirm this note
           \begin{align*}
           	\langle V^{(k)}	\cdot \nabla w^{(k)} ,\mathbb P \phi \rangle &\leq C \| \nabla w^{(k)}\|_{L^2} \|  \phi \|_{L^6} \|V^{(k)}\|_{L^3}\leq C \| \nabla w^{(k)}\|_{L^2} \|  \phi \|_{H^1}t^{-\frac 3 2 (\frac 1 3 -\frac 1 p)} \| u_0\|_{L^{p,\I}}.
           \end{align*}
           Additionally, 
                \begin{align*}
           	\langle w^{(k)}	\cdot \nabla V^{(k)} , \mathbb P\phi \rangle &\leq C \| \nabla w^{(k)}\|_{L^2} \| \nabla  \phi \|_{L^2} \|V^{(k)}\|_{L^3}\leq C \| \nabla w^{(k)}\|_{L^2} \|  \phi \|_{H^1}t^{-\frac 3 2 (\frac 1 3 -\frac 1 p)} \| u_0\|_{L^{p,\I}}.
           \end{align*}
           Hence, 
           \begin{align*}
           \| \partial_t w^{(k)}\|_{H^{-1}(B(n))}(t)&\leq C(n) \|w^{(k)}\|_{L^2}^{1/2}\|\nabla w^{(k)}\|_{L^2}^{3/2}
         + C \| \nabla w^{(k)}\|_{L^2}  t^{-\frac 3 2 (\frac 1 3 -\frac 1 p)} \| u_0\|_{L^{p,\I}} .
           \end{align*}
           We now apply the $L^{4/3}(0,T)$ norm. The leading term on the right-hand side is clearly finite. For the second, we need to bound
           \[
           \int_0^T C \| \nabla w^{(k)}\|_{L^2}^{\frac 4 3}  t^{-\frac 3 2 (\frac 1 3 -\frac 1 p) \frac 4 3} \,dt.
           \] 
           This will be finite provided $ t^{-\frac 3 2 (\frac 1 3 -\frac 1 p) \frac {12} 3} \in L^1(0,T)$, which is true for $p\in (0,6)$.
            These remarks prove the inclusion for $\partial_t w^{(k)}$ uniformly in $k$.
            
            Hence, by a Cantor diagonalization argument, we can extract a sub-sequence which we abusively do not re-label so that 
            \[
            w^{(k)}\buildrel\ast\over\rightharpoonup  w \text{ in }L^\I(0,T; \R^3) \text{ and }\nabla w^{(k)}\rightharpoonup \nabla w \text{ in }L^2((0,T)\times \R^3),
            \]
         	and 
            \[
            w^{(k)}\to w \text{ in } L^2(0,T;L^2).
            \]
            This can be improved in the standard way to
             \[
            w^{(k)}\to w \text{ in } L^{10/3-}(0,T;L^{10/3-}),
            \]
            which is necessary to obtain the local energy inequality.
            We also have that, for any $n\in \N$ and $q\in (p,\I]$,
            \[
            V^{(k)}\to V \text{ in } L^\I(n^{-1}, T; L^q).
            \]

            We need to show that the limit $w$ leads to an $L^{p,\I}$-weak solution $u$.  We know that $w$ belongs to $L^\I(0,T; L^2)\cap L^2(0,T;\dot H^1 )$ by convergence properties. We also know $\partial_t w \in L^{4/3}(0,T;H^{-1})$ by the same argument as above for $w^{(k)}$. Then, the continuity property in Definition \ref{def.weak} can be obtained in a standard way for $w$---see the discussion surrounding \cite[(3.27)]{KwTs} or \cite[Lemma 3.4]{Tsai-book}---and, by extension, for $u$ using the regularity of $V$.
            
            The modes of convergence are sufficient to show that $w$ is a distributional solution to the perturbed Navier-Stokes equations, from which it follows that $u$ is a distributional solution to \eqref{eq.ns}.  We do not check the convergence of all terms because the terms from \eqref{eq.ns} are well understood and identical to \cite{BaSeSv}. Instead, we only examine the terms involving $V$ as these could in principle be different. We have for $\phi\in C_c^\I([0,T)\times \R^3)$ that
            \[
            \int_0^t \int (V\cdot \nabla w - V^{(k)}\cdot \nabla w^{(k)})\phi\,dx\,ds=   \int_0^t \int ((V - V^{(k)})\cdot \nabla w^{(k)})\phi +  V^{(k)}\cdot \nabla(w- w^{(k)})\phi\,dx\,ds. 
            \]
            Let $\delta\in (0,T)$ be a small parameter. Then, applying H\"older with the space-time norms $L^2_tL^2_x$ and $L^q_t L^q_x$ and $L^r_tL^r_x$ where $1= \frac 1 2 + \frac 1 q+\frac 1 r$, we have
             \begin{align*}
            &\int_0^\delta \int ((V - V^{(k)})\cdot \nabla w^{(k)})\phi \,dx\,ds 
            \\&\lesssim_M \| \phi \|_{L^r_t L^r_x} \|  w^{(k)}\|_{L^2_t \dot H^1_x} \bigg( \int_0^\delta ( s^{-\frac 3 2 ( 1/p-1/q )}  )^q \bigg)^{1/q},
            \end{align*}
            where we used the fact that $V^{(k)}$ and $V$ solve heat equations with initial data bounded uniformly in $L^{p,\I}$ by $M$.
            We may take $q$ to be arbitrarily close to but greater than $p$. Doing so results in a positive power of $\delta$ when we evaluate the last integral above. Therefore, we can make this term as small as we want by taking $\delta$ small. 
            Additionally, 
            \[
             \int_\delta^t \int ((V - V^{(k)})\cdot \nabla w^{(k)})\phi \,dx\,ds \leq \| \phi \|_{L^2_t \dot H^1_x} \| \nabla w^{(k)}\|_{L^2_t \dot H^1_x} \|V-V^{(k)}\|_{L^\I( \R^3\times (\delta,T))},
            \]
            and the upper bound vanishes as $k\to \I$ for any $\delta$ due to the convergence properties of $V^{(k)}$. It follows that
            \[
             \bigg|\int_0^t \int (V\cdot \nabla w - V^{(k)}\cdot \nabla w^{(k)})\phi\,dx\,ds \bigg|\to 0,
            \]
            as $k\to \I$.  Note that the convergence just shown is  stronger than distributional because we did not put the derivative on $\phi$, but the distributional statement follows from the same argument.
            The other term is bounded by
            \[
             \int_0^t \int | V^{(k)}| |w- w^{(k)}| |\nabla \phi|dx\,ds \leq \| w-w^{(k)}\|_{L^2_tL^2_x} \| \nabla \phi\|_{L^r_tL^r_x} \| V^{(k)}\|_{L^q_{t}L^q_x}.
             \]
             Again, $q$ can be chosen close to $p$ so that $\| V^{(k)}\|_{L^q_{x,t}}$ is finite. Then, this term vanishes by the strong convergence of $w^{(k)}$ in $L^2_tL^2_x$.

             The other terms that must be shown to vanish as $k\to \I$ are 
            \[
              \int_0^t \int (w\cdot \nabla V - w^{(k)}\cdot \nabla V^{(k)})\phi\,dx\,ds,
            \]
            and 
            \[
              \int_0^t \int (V\cdot \nabla V - V^{(k)}\cdot \nabla V^{(k)})\phi\,dx\,ds.
            \]
            For the first, to get a distributional solution the derivative is moved to $\phi$ after which the same argument as above goes through. For the second of these, a variation of this also works but, since $V^2$ is more singular than $V$,  we check the details explicitly. It suffices to show that 
            \[
            \int_0^t\int |V^{(k)} - V|(|V^{(k)}|+|V|) |\nabla \phi| \,dx\,ds\to 0.
            \]
            This is easy on $(\delta, T)$ so let us consider $(0,\delta)$. We have 
           \begin{align*}
           	 \int_0^\delta\int |V^{(k)} - V|(|V^{(k)}|+|V|) |\nabla \phi| \,dx\,ds&\leq \int_0^\delta \big( \|  V\|_{L^q_{x}}^2 +  \|  V^{(k)}\|_{L^q_{x}}^2 \big) \| \nabla \phi\|_{L^r_{x}}\,dx\,ds,
           \end{align*}
           where now $\frac 1 r +\frac 2 {q}=1$. We can again choose $q$ close to $p$ so that the above is bounded by a positive power of $\delta$. These remarks show that $w$ is a distributional solution to perturbed Navier-Stokes.
            
            The local energy inequality is easy to prove and we omit the details. We do note that some care needs to be taken because there is no cancellation when integrating $w\cdot\nabla V w\phi$. However, if $\phi \in C_c^\I ( (0,T)\times \R^3)$, then we are away from the singularity of $V$ and so this term becomes more regular than terms which have the cancellation and which are therefore known to converge via classical works.  
             
            \end{proof}

            \begin{proof}[Proof of Theorem \ref{thrm.existence}]
            		We  now construct $L^{p,\I}$-weak solutions.  Note that $C_{c,\sigma}^\I$ is dense in the weak-star topology in $L^{p,\I}$. Taking $u_0^{(k)}\in C_{c,\sigma}^\I$ as our initial data, we have the existence of a global-in-time suitable Leray weak solution with data $u_0^{(k)}$ by classical results \cite{leray,CKN}. See also \cite[Definition 3.1 \& Theorem 3.9]{Tsai-book}. These solutions are  $L^{p,\I}$-weak solutions for obvious reasons. Applying Theorem \ref{thrm.stability} completes the proof.
            \end{proof}

          \subsection{\textit{A priori} bounds revisited}
           
          In this section we extend the \textit{a priori} bound in Theorem \ref{thrm.aprioribound} to other space-time norms. 
          If $u_0\in L^{p,\I}$, then the decay property of $L^{p,\I}$-weak solutions states that 
          \[
          \| u - P_0\|_{L^2}\lesssim t^{\sigma},
          \]
          for   appropriate choices of $\sigma=\sigma(p)$. There is a dimensional relationship between the suppressed constant and the choice of $\sigma$. In the case that $p=3$ the dimensional relationship is that the suppressed constant is dimensionless.  In that case it is possible to extend the above results to space-time spaces with different dimensions. The following lemma is from \cite{BP2}.

          \begin{lemma}Fix $q\in (3/2 ,3)$, $T>0$ and $k\in \N_0$. Assume $u_0\in L^{3,\I}$ and is divergence free. Let $u$ be an $L^{3,\I}$-weak solution with initial data $u_0$. Then, letting $r =\frac {2q} {2q-3}$,
         	\[
         	\| u-P_0 \|_{L^r(0,T;L^{q})} \lesssim_{k,q,u_0} T^{\frac 1 2}.
         	\]
          \end{lemma}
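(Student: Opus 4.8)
The only real input is the \textit{a priori} bound of Theorem~\ref{thrm.aprioribound} specialised to $p=3$, where $\sigma(3)=\tfrac12$ and $\tfrac{2p}{4-p}=6$: writing $\td u := u-P_0 = u-e^{t\De}u_0$, it gives for $0<t<T$
\[
\|\td u(t)\|_{L^2}^2+\int_0^t\|\nb\td u\|_{L^2}^2\,ds \lesssim_{u_0} t^{1/2},
\]
i.e.\ the dimensionless decay \eqref{ineq.BSSdecay}; letting the upper endpoint tend to $t$ in the supremum of \eqref{ineq.BSSbound} also yields $\|\td u(t)\|_{L^2}\lesssim_{u_0} t^{1/4}$ for a.e.\ $t\in(0,T)$, while $\int_0^T\|\nb\td u\|_{L^2}^2\,ds\lesssim_{u_0} T^{1/2}$. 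Note that $(r,q)$ lies on the line $\tfrac2r+\tfrac3q=2$, which is exactly the scaling of the target estimate $\|\td u\|_{L^r(0,T;L^q)}\lesssim T^{1/2}$, so every exponent below is forced.

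\textit{The range $q\in[2,3)$.} Here I would argue purely by interpolation. Gagliardo--Nirenberg in $\R^3$ gives $\|\td u(t)\|_{L^q}\lesssim \|\td u(t)\|_{L^2}^{1-\theta}\|\nb\td u(t)\|_{L^2}^{\theta}$ with $\theta=\tfrac32-\tfrac3q\in[0,\tfrac12)$. Raise this to the power $r$ and integrate over $(0,T)$; apply H\"older in $t$ placing $\|\nb\td u(t)\|_{L^2}^{\theta r}$ in $L^{2/(\theta r)}_t$ (legitimate since $\theta r=\tfrac{3(q-2)}{2q-3}<1$), which costs $\big(\int_0^T\|\nb\td u\|_{L^2}^2\,ds\big)^{\theta r/2}\lesssim T^{\theta r/4}$, while the conjugate factor $\|\td u(t)\|_{L^2}^{(1-\theta)r}\lesssim t^{(1-\theta)r/4}$ has a time integral that converges at $t=0$ (positive exponent) and contributes a further $T^{(1-\theta)r/4+1-\theta r/2}$. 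Collecting powers, $\int_0^T\|\td u(t)\|_{L^q}^r\,dt\lesssim T^{r/4+1-\theta r/2}$, and the identity $\tfrac r4+1-\tfrac{\theta r}{2}=\tfrac r2 \Longleftrightarrow r=\tfrac{4}{1+2\theta}=\tfrac{2q}{2q-3}$ closes this range with the constant depending only on $u_0$ (and $q$).

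\textit{The range $q\in(3/2,2)$.} Interpolating $L^2$ against $L^6$ no longer reaches $L^q$, so I would bring in the spatial decay of $\td u$ itself. An $L^{3,\I}$-weak solution is mild (see the discussion following Theorem~\ref{thrm.stability}), so $\td u=B(u,u)$; using that $u\otimes u\in L^\I(0,T;L^{3/2,\I})$ — which follows from the persistence $u\in L^\I(0,T;L^{3,\I})$ of $L^{3,\I}$-weak solutions — together with the bound $C(|x|+\sqrt\tau)^{-4}$ on the kernel of $e^{\tau\De}\mathbb P\nb\cdot$ and O'Neil's inequality \eqref{eqn:Young-O'Neil-type}, one obtains the pointwise-in-time estimate
\[
\|\td u(t)\|_{L^q}\lesssim_{u_0}\Big(\int_0^t(t-s)^{-\frac32+\frac{3}{2q}}\,ds\Big)\|u\|_{L^\I_tL^{3,\I}_x}^2\lesssim_{u_0} t^{\frac{3-q}{2q}},
\]
the $s$-integral converging precisely because $q<3$. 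Raising to the power $r$ and integrating gives $\int_0^T\|\td u(t)\|_{L^q}^r\,dt\lesssim_{u_0} T^{\,r\frac{3-q}{2q}+1}=T^{r/2}$ by the same scaling identity $r=\tfrac{2q}{2q-3}$. (This second argument in fact also covers $q\in[2,3)$.)

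\textit{Main obstacle.} For $q\ge 2$ everything is bookkeeping dictated by the line $\tfrac2r+\tfrac3q=2$. The delicate point is the low range $q<2$, where genuine spatial-decay information on $\td u$ is needed: the gap to fill is the persistence $u\in L^\I(0,T;L^{3,\I})$ (equivalently, control of $u\otimes u$ in some $L^\rho_tL^{3/2,\I}_x$ with $\rho$ large enough to absorb the $(t-s)^{-3/2+3/(2q)}$ singularity). If that persistence is not available directly, I would instead run the Calder\'on-type splitting behind Theorem~\ref{thrm.aprioribound}, decomposing $u_0=\bar u_0^N+\td u_0^N$ and estimating the subcritical part through the bilinear/Oseen bound above and the supercritical part through the energy inequality, then optimising in $N$ as in the proof of Corollary~\ref{cor:apriori-bound}.
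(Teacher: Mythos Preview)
Your argument for $q\in[2,3)$ is correct and actually shorter than the paper's route. The gap is in $q\in(3/2,2)$: the persistence $u\in L^\infty(0,T;L^{3,\infty})$ is not part of the definition of an $L^{3,\infty}$-weak solution and is not known to hold --- you only have $\td u\in L^\infty_tL^2_x\cap L^2_t\dot H^1_x$, and $L^2\not\subset L^{3,\infty}$, so $u\otimes u\in L^\infty_tL^{3/2,\infty}_x$ is unjustified. The Calder\'on splitting you propose as a fallback is designed to produce $L^2$-based energy bounds and will not directly give pointwise-in-$t$ control on $\|\td u\|_{L^q}$ for $q<2$ either.

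The paper (see the proof of Proposition~\ref{prop.LpinftyDecaySpacteTime}, which at $p=3$ specialises to this lemma) closes the gap by combining your two ingredients differently. It writes $\td u=B(u,\td u)+B(\td u,P_0)+B(P_0,P_0)$ via mildness, but then applies Gagliardo--Nirenberg to $\|\td u\|_{L^{2q}}$ rather than to $\|\td u\|_{L^q}$: since $2q\in(3,6)$ for every $q\in(3/2,3)$, this interpolation between $\|\td u\|_{L^2}$ and $\|\nabla\td u\|_{L^2}$ is always valid, and no persistence of $u$ in $L^{3,\infty}$ is needed --- the smoothing of the Oseen kernel carries the $L^{2q}\times L^{2q}$ product into $L^q$. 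The resulting time convolution against $(t-s)^{-1/2}$ is then handled by Hardy--Littlewood--Sobolev rather than H\"older, which is exactly where the endpoint $q=3/2$ (equivalently $r=\infty$) is excluded.
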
 
          To get a better sense of the scaling of the preceding estimate, it is helpful to note the excluded endpoint for this estimate would give
          \[
          \| u - P_0\|_{L^{3/2}}\lesssim t^{1/2},
          \]
          so this can be seen as shifting the \textit{a priori} bound from $L^2$ to $L^{3/2}$.

          We presently explore the extent to which these remarks extend to the supercritical setting. 
          The updated lemma is as follows.
          \begin{proposition}\label{prop.LpinftyDecaySpacteTime}
         	Fix $q\in (3/2 ,3)$, $T>0$ and $k\in \N_0$. Assume $u_0\in L^{p,\I}$ and is divergence free. Let $u$ be a $L^{p,\I}$-weak solution with initial data $u_0$. Then,  letting 	$r = \frac {2q} {2q-3}$,
         	\[
         	\| u-P_0 \|_{L^r(0,T;L^{q})} \lesssim_{k,q,u_0}  	 T^{  \sigma(p) } .
         	\]
          \end{proposition}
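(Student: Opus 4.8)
The plan is to bootstrap the energy-type \textit{a priori} bound of Theorem~\ref{thrm.aprioribound} to the mixed norm $L^r(0,T;L^q)$ via the mild representation of $w:=u-P_0$, the point being that $w$ is \emph{quadratic} in $u$, so the $t^{\sigma(p)}$ scaling of the energy of $w$ is transferred to the \emph{same} power of $T$ (rather than $T^{\sigma(p)/2}$, which is all a naive interpolation of $w$ alone would give). Write $\sigma=\sigma(p)$. First I would record the inputs: on a bounded interval $[0,T]$, Theorem~\ref{thrm.aprioribound} gives $\|w\|_{L^\infty(0,T;L^2)}\lesssim_{u_0}T^{\sigma/2}$ and $\|\nabla w\|_{L^2(0,T;L^2)}\lesssim_{u_0}T^{\sigma/2}$ (the $t^{1/2}$ term there is lower order and is absorbed into the constant since $\sigma\le1/2$), and hence, by $\dot H^1(\R^3)\hookrightarrow L^6(\R^3)$, also $\|w\|_{L^2(0,T;L^6)}\lesssim_{u_0}T^{\sigma/2}$.

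Since $L^{p,\infty}$-weak solutions are mild, $w=B(u,u)$, and writing $u=P_0+w$,
\[
w=B(w,w)+\bigl(B(P_0,w)+B(w,P_0)\bigr)+B(P_0,P_0)=:I_1+I_2+I_3,
\]
which I would estimate separately. For $I_1$ --- the term quadratic in $w$, and hence the slowest to decay --- the previous step shows $w\otimes w\in L^\infty(0,T;L^1)\cap L^1(0,T;L^3)$ with both norms $\lesssim T^{\sigma}$ (since $\|w(t)\otimes w(t)\|_{L^1}=\|w(t)\|_{L^2}^2$ and $\int_0^T\|w\otimes w\|_{L^3}=\int_0^T\|w\|_{L^6}^2$); interpolating places $w\otimes w$ in $L^{r_1}(0,T;L^{q_1})$ with norm $\lesssim T^{\sigma}$ for a range of admissible pairs. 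Feeding this into the Duhamel operator and using the smoothing estimate $\|e^{\tau\Delta}\mathbb P\nabla\!\cdot f\|_{L^q}\lesssim\tau^{-\frac12-\frac32(1/q_1-1/q)}\|f\|_{L^{q_1}}$ (for $q\ge q_1$) together with a convolution-in-time estimate, a short computation shows the resulting power of $T$ is exactly $\sigma$ \emph{precisely} along the curve $\tfrac2r+\tfrac3q=2$, i.e. $r=\tfrac{2q}{2q-3}$ --- this is where the exponent in the statement comes from. For $I_3=B(P_0,P_0)$ I would use \eqref{ineq.aaa} to bound $\|P_0(s)\|_{L^a}\lesssim s^{-\frac32(1/p-1/a)}\|u_0\|_{L^{p,\infty}}$, choose $a$ so the Beta-function time integral converges, and obtain $\|I_3\|_{L^r(0,T;L^q)}\lesssim\|u_0\|_{L^{p,\infty}}^2\,T^{3/2-3/p}$, noting $3/2-3/p\ge\sigma(p)$ on $(2,3]$ with equality at $p=3$. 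For the cross terms $I_2$ I would combine \eqref{ineq.aaa} with the Gagliardo--Nirenberg bound $\|w(s)\|_{L^a}\lesssim s^{\sigma(1-\theta_a)/2}\|\nabla w(s)\|_{L^2}^{\theta_a}$ ($2\le a\le6$) from Step~1; the bookkeeping (Hölder in $s$, then convolution in time) again produces a power of $T$ that is $\ge\sigma(p)$ --- heuristically because $I_2$ carries one extra factor of $\|u_0\|_{L^{p,\infty}}$ relative to $I_1$, which dimensional balance must offset by a larger power of $T$. Summing the three contributions gives $\|w\|_{L^r(0,T;L^q)}\lesssim_{q,u_0}T^{\sigma(p)}$ for $T\le1$, and the general finite-$T$ statement follows with a $T$-dependent constant.

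The step I expect to be the main obstacle is the estimate for $I_1$, which is \emph{borderline}: along the curve $\tfrac2r+\tfrac3q=2$ the time kernel $\tau^{-\alpha}$, with $\alpha=\tfrac12+\tfrac32(1/q_1-1/q)$, satisfies $\alpha\rho=1$ for the Young exponent $\rho$ that the target $L^r_t$ dictates, so plain Young's inequality in time only closes up to a logarithm. I would resolve this as in \cite{BP2}: use real interpolation to place $w\otimes w$ in a Lorentz-in-time space $L^{r_1,1}(0,T;L^{q_1})$ --- the real interpolation space between $L^\infty(0,T;L^1)$ and $L^1(0,T;L^3)$ has a free lower index --- and then apply O'Neil's convolution inequality (in the spirit of \eqref{eqn:Young-O'Neil-type}) in the time variable, which absorbs the endpoint. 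Everything else is routine bookkeeping with \eqref{ineq.aaa}, Hölder's inequality and the heat-semigroup bounds, and the argument runs closely parallel to the critical case of \cite{BP2}, with the systematic replacement of the critical decay exponent $1/4$ by $\sigma(p)/2$ in the inputs coming from the \textit{a priori} bound.
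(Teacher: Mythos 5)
Your proposal is correct and takes essentially the same route as the paper. You use the identical Picard decomposition of $w=u-P_0$ into a quadratic $B(w,w)$ term, a cross term, and $B(P_0,P_0)$; you feed in the same inputs (Theorem~\ref{thrm.aprioribound} together with $\dot H^1\hookrightarrow L^6$, i.e.\ Gagliardo--Nirenberg on $w$, and \eqref{ineq.aaa} for $P_0$); you correctly observe that $3/2-3/p\ge\sigma(p)$ handles $B(P_0,P_0)$; and, most importantly, you identify the true obstacle --- that the Duhamel time-convolution sits exactly at the Young endpoint along $\tfrac2r+\tfrac3q=2$ --- and resolve it with Lorentz-in-time real interpolation plus O'Neil, which is the same mechanism the paper invokes via the Hardy--Littlewood--Sobolev (fractional integration) inequality in the time variable. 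The only meaningful difference is organizational: the paper treats $B(u,u-P_0)+B(u-P_0,P_0)$ together by first bounding $\|f\otimes g\|_{L^q}\le\|f\|_{L^{2q}}\|g\|_{L^{2q}}$ and applying $ab\le a^2+b^2$, thereby reducing everything to the diagonal quantities $\|u-P_0\|_{L^{2q}}^2$ and $\|P_0\|_{L^{2q}}^2$ and dodging a head-on estimate of the cross term; your plan to bound $I_2$ directly also works (your dimensional heuristic that the extra $\|u_0\|_{L^{p,\infty}}$ factor forces a power of $T$ no smaller than $\sigma(p)$ is consistent with the paper's exponent arithmetic), but requires slightly more delicate bookkeeping with the $s^{-\beta}$ singularity from $\|P_0(s)\|_{L^{b}}$ when $p$ is close to $2$; the paper's trick avoids that. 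Either way the argument closes with the same exponent $T^{\sigma(p)}$.
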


          The proof will use mild solution techniques. 
          Recall that $e^{t\Delta}\mathbb P  $ is the Oseen tensor. Let $S$ denote its kernel.  We have the following pointwise estimate for $S$,
          \EQ{\label{ineq.oseen}
          	|\pd_t^m\nb_x^k S(x,t)|\le \frac{C_{k,m}}{(|x|+\sqrt t)^{3+k+2m}},}
          see the original work of Oseen \cite{Oseen} as well as \cite{VAS} and the more recent references \cite{LR,MaTe,ShSh}.  This estimate ultimately leads to a class of $L^p$-$L^q$ estimates \cite{FJR,Kato,yamazaki,LR,Tsai-book}:
          \[
          \| D^\alpha e^{t\Delta}\mathbb P F \|_{L^p(\R^3)}\lesssim \frac 1 {t^{\frac {|\alpha|} 2 + \frac 3 2 (\frac 1 q-\frac 1 p)}} \| F\|_{L^q(\R^3)},
          \]
          where $\alpha$ is a multi-index.
          
          \begin{proof} 
         	Observe that 
         	\[
         	u-P_0 = B(u,u-P_0)+ B(u-P_0,P_0)+ {B(P_0,P_0)}.
         	\]
         	and 
         	\[
         	\| P_0\|_{L^{2q}}\lesssim t^{3/(4q)-3/(2p)}.
         	\]
         	The last inequality allows us to quickly conclude that 
         	\[
         	\|B(P_0,P_0)(t)\|_{L^q} \lesssim t^{ \frac 1 2 -\frac 3 p +\frac 3 {2q} }.
         	\]
         	Hence,
         	\[
         	\bigg\|				\|B(P_0,P_0)(s)\|_{L^q}  \bigg\|_{L^r(0,T)}\lesssim T^{ \frac 1 2 -\frac 3 p +\frac 3 {2q} +\frac 1 r}.
         	\]

         	For the other terms we have that  
         	\EQ{ 
         		\| B(u-P_0,P_0) \|_{L^{q}}&\lesssim \int_0^t \frac 1 {(t-s)^\frac 1 2}\big( \| u-P_0\|_{L^{2q}}^2(s) + \| P_0\|_{L^{2q}}^2(s)\big)\,ds
         	}
         	The latter term is bounded as was $B(P_0,P_0)$. For the former  we note that 
         	\[
         	\|u-P_0\|_{L^{q}}\leq C \| \nabla (u-P_0)\|_{L^2}^{3/2-3/(2q)} \| u-P_0\|_{L^2}^{3/(2q)-1/2},
         	\]
         	by the Gagliardo-Nirenberg inequalities and the label $q$ is unrelated to the preceding line.  Hence
         	\EQ{ 
         		\| B(u-P_0,P_0) \|_{L^{q}}
         		&\lesssim \int_0^t \frac 1 {(t-s)^\frac 1 2}   \| \nabla (u-P_0)\|_{L^2}^{3-3/q} \| u-P_0\|_{L^2}^{3/q-1}(s)\,ds
         		\\&\lesssim t^{(3/q-1)\sigma(p) /2  } \int_0^t \frac 1 {(t-s)^\frac 1 2}  \| \nabla (u-P_0)\|_{L^2}^{3-3/q}  \,ds
         	}
         	Now, $3-3/q\in (3/2,2)$ provided $q\in (3/2,3)$. Hence 
         	$
         	\frac {2q} {3q-3}\in (1,2), 
         	$
         	the H\"older conjugate of which is $2q/(3-q)\in (2,\I)$.
         	We, therefore, cannot apply H\"older's inequality in the time-variable here as it would lead to a divergent integral 
         	\[
         	\int_0^t \frac 1 {(t-s)^{ \frac 1 2 ( 2q / (3-q))}}\,ds.
         	\]
         	If we would like an upper bound, therefore, we must use space-time norms in conjunction with the Hardy-Littlewood-Sobolev inequality---the application of the latter is why $L^\I(0,T;L^{3/2})$ is excluded. 
         	In particular we have 
         	\[
         	\bigg\| I_{1/2} [ \| \nabla (u-P_0) \|_{L^2}^{3-3/q}(s) \chi_{(0,T)}(s)] \bigg\|_{L^r(\R)}\lesssim_r 	\bigg\|  \| \nabla (u-P_0) \|_{L^2}^{3-3/q}(s) \chi_{(0,T)}(s) \bigg\|_{L^{\td r}(\R)},
         	\]
         	where 
         	\[
         	\frac 1 r = \frac 1 {\td r} -\frac 1 2.
         	\]
         	We take $\td r = 2q /(3q-3)\in (1,2)$, which gives 
         	\[
         	r = \frac {2q} {2q-3}.
         	\]
         	Then,
         	\[
         	\bigg\|  \| \nabla (u-P_0) \|_{L^2}^{3-3/q}(s) \chi_{(0,T)}(s) \bigg\|_{L^{\td r}(\R)} \lesssim \bigg(\int_0^T \| \nb (u-P_0)\|_{L^2}(s)^2 \,ds \bigg)^{\frac {3q-3} {2q}}\lesssim T^{\frac {\sigma (p)} 2\frac {3q-3} {q}},
         	\]
         	by the decay estimate for $L^{p,\I}$-weak solutions. Hence
         	\[
         	\| B(u-P_0,u-P_0)\|_{L^r(0,T;L^q(\R^3))} \lesssim T^{\sigma(3/q-1) /2 + \sigma (3q-3)/(2q) }+ T^{ \frac 1 2 -\frac 3 p +\frac 3 {2q} +\frac 1 r}=T^{\sigma(p)}+ T^{ \frac 1 2 -\frac 3 p +\frac 3 {2q} +\frac 1 r}.
         	\]
         	Note that  $ \frac 1 2 -\frac 3 p +\frac  {3} {2q} +\frac {2q-3} {2q}=  \frac 3 2 -\frac 3 p$. Furthermore, if $p\in (2,3)$, one can check that 
         	\[ 
         \sigma(p)=	\frac 1 2 \frac {p-2}{4-p} < \frac 3 2 - \frac 3 p,
         	\]
         	with equality holding at $p=2,\,3$.
          \end{proof}

	 \section{Asymptotic expansion in the time variable}\label{sec.asy}
	In this section we prove Theorem \ref{thrm.asympt}. We begin by recalling  the local smoothing result of Jia and \v Sver\'ak \cite{JS}. Note that $L^2_\uloc$ is the space of uniformly locally square integrable functions and is defined by the norm
	 \[
	 \|f\|_{L^2_\uloc}^2 :=\sup_{x_0\in \R^3}\int_{B_1(x_0)}|f|^2\,dx. 
	 \]
	 Let $E^2$ denote the closure of $C_c^\I$ in the $L^2_\uloc$ norm. For $2<p$,
	 $L^{p,\I}$ embeds in $E^2$. See the appendix of \cite{BT8} for the proof of this in the $p=3$ case. The general case follows from the same argument. 
	 Local smoothing as presented below refers to local energy solutions (a.k.a.~local Leray solutions using the terminology of \cite{JS}; see also \cite{KS,LR}). 
	 It is straightforward to show that  $L^{p,\I}$-weak solutions are local energy solutions so there are no issues using this theorem.
	 
	 \begin{theorem}[Local smoothing {\cite[Theorem 3.1]{JS}}]\label{theorem:JSlocalsmoothing} 
	 	Let $u_0\in E^2$ be divergence free. Suppose $u_0|_{B_2(0)}\in L^p(B_2(0))$ with $\|u_0\|_{L^p(B_2(0))}<\I$ and $p>3$. Decompose $u_0 =U_0+U_0'$ with $\div U_0 =0,\,U_0|_{B_{4/3}} = u_0,\, \operatorname{supp} U_0 \Subset B_2(0)$ and $\|U_0\|_{L^p(\R^3)}<C(p,\|u_0\|_{L^p(B_2(0))})$. Let $U$ be the locally-in-time defined mild solution to \eqref{eq.ns} with initial data $U_0$. Then, there exists a positive $T=T(p,\|u_0\|_{L^2_\uloc},\|u_0\|_{L^p(B_2(0))})$ such that any local energy solution $u$ with data $u_0$ satisfies 
	 	\EQ{\label{ineq.a}
	 		\|u-U\|_{C^\gamma_{\operatorname{par}}(\overline {B}_{\frac{1}{2}}\times [0,T])}\le C(p,\|u_0\|_{L^2_\uloc},\|u_0\|_{L^p(B_2(0))}),
	 	} 
	 	for some $\gamma=\gamma(p)\in(0,1)$.
	 \end{theorem}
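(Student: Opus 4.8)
The plan is to follow the Jia--\SVERAK\ scheme: isolate the subcritical part of the data near $B_2(0)$, subtract the smooth mild solution it generates, and prove that the remainder is regular near the spatial origin via an $\varepsilon$-regularity argument. First I would record the decomposition $u_0 = U_0 + U_0'$ built into the statement: $U_0$ is divergence free, supported in $B_2(0)$, agrees with $u_0$ on $B_{4/3}$, and has $\|U_0\|_{L^p(\R^3)}$ controlled by $\|u_0\|_{L^p(B_2(0))}$, while $U_0'$ is divergence free and, crucially, \emph{vanishes on $B_{4/3}$}. Since $p>3$, $L^p$ is subcritical, so by the Kato/Fabes--Jones--Rivi\`ere theory there is a mild solution $U$ with data $U_0$ on some $[0,T_1]$ with $T_1 = T_1(p,\|u_0\|_{L^p(B_2(0))})$, and $U$ together with its space--time derivatives is bounded on $\R^3\times(0,T_1]$ with quantitative bounds.

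Next I would set $v = u - U$, which is divergence free and solves the perturbed system
\[
\pd_t v - \Delta v + v\cdot\nb v + U\cdot\nb v + v\cdot\nb U + \nb \pi = 0,
\]
with $v(\cdot,0) = U_0'$. Because $u$ is a local energy solution and $U$ is smooth, $v$ satisfies a local energy inequality for this system, and the extra drift terms, having bounded coefficients $U$ and $\nb U$, are lower-order perturbations of the Navier--Stokes structure. I would then invoke the Lemari\'e-Rieusset a priori bounds for local energy solutions: on some interval $[0,T_2]$ with $T_2 = T_2(\|u_0\|_{L^2_\uloc})$, $u$ and hence $v$ have uniformly bounded local $L^2$ norm in space.

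Combining this with the vanishing of $v(\cdot,0)$ on $B_{4/3}$, a Gr\"onwall estimate on the local energy inequality should show that the scale-invariant quantities built from $\int|\nb v|^2$, $\int|v|^3$ and $\int|\pi|^{3/2}$ over small parabolic cylinders centered at $(x_0,0)$ become as small as we like for small radii, uniformly over $x_0\in B_{1/2}$ (so that the cylinder stays inside $B_{4/3}$). Feeding this smallness into a Caffarelli--Kohn--Nirenberg-type $\varepsilon$-regularity criterion, adapted to accommodate the bounded extra drift, gives $v\in L^\infty$ near $(x_0,0)$; a second iteration, or linear parabolic theory for the resulting equation with bounded coefficients, upgrades this to $v\in C^\gamma_{\operatorname{par}}$ for some $\gamma=\gamma(p)\in(0,1)$, with constants depending only on the listed data. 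Covering $\overline B_{1/2}$ by finitely many such neighbourhoods and adding back the smooth $U$ yields $u\in C^\gamma_{\operatorname{par}}(\overline B_{1/2}\times[0,T])$ with $T=\min\{T_1,T_2\}$ and the asserted dependence of the constant.

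The hard part will be the pressure step: since $\pi$ is nonlocal, the vanishing of $v(\cdot,0)$ on $B_{4/3}$ does not by itself confine the dynamics, so one must quantify how the far field of $u$---which lies only in $L^2_\uloc$ and need not decay---propagates through $\nb\pi$ into a neighbourhood of the origin, and show this contribution is small on a short time interval. This is handled by the local pressure decomposition for local energy solutions (splitting $\pi$ near the origin into a Calder\'on--Zygmund piece governed by $v$ and $U$ locally, plus a spatially harmonic remainder that is smooth there) together with the a priori estimates of that theory; arranging all the constants to depend only on $\|u_0\|_{L^2_\uloc}$ and $\|u_0\|_{L^p(B_2(0))}$ is the main bookkeeping burden.
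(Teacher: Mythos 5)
This theorem is quoted verbatim from Jia--\SVERAK\ \cite[Theorem 3.1]{JS} and is used as a black box in the present paper; no proof appears here. Your sketch faithfully reproduces the Jia--\SVERAK\ strategy (subcritical mild solution from the localized data, perturbed local energy inequality for $v=u-U$ with $v(\cdot,0)$ vanishing near the origin, local pressure decomposition to control the far-field, $\varepsilon$-regularity adapted to the drift and to cylinders touching $t=0$), so there is nothing substantive to flag; the only minor imprecision is at the end, where the conclusion proved is $u-U\in C^\gamma_{\operatorname{par}}$ rather than $u\in C^\gamma_{\operatorname{par}}$ (the statement is about the difference precisely because $U$'s own regularity at $t=0$ is limited by $p$).
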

	 
	 See also \cite{BP2020,KMT,KMT2,Kwon,ABP} for more recent work on local smoothing which allows locally critical data which is also locally small; the above statement on the other hand is for locally sub-critical data. The dependence on $\|u_0\|_{L^2_\uloc}$ can be replaced with $\|u_0\|_{L^{p,\I}}$ because $L^{p,\I}$ embeds in $L^2_\uloc$ for $2<p$.
	 Additionally, the result can be re-stated for any ball $B\Subset B_2(0)$ replacing $B_{4/3}(0)$ with the understanding that the constant in \eqref{ineq.a} will change accordingly.

	 Our proof is  structured in several steps. In each step, we extract additional terms for the  asymptotic expansion.
	 
	 \bigskip \noindent \underline{Zeroth step}:
	 By assumption, the local smoothing of Jia and \SVERAK~\cite{JS} applies in $B_0\Subset B$. In particular, there exists $T$ so that $u-a \in C^\gamma_{\operatorname{par}}(B_0\times [0,T]  )$ for some $\ga>0$ where $a$ represents a strong solution to the Navier-Stokes equations coming from an initial data that is bounded, supported in $B$ and is identical to $u_0$ on a ball  $B_0'$ for which $B_0\Subset B_0' \Subset B$. All balls are taken to be concentric. Call the localized initial data $a_0$. We will work with a nested sequence of balls satisfying $B_k\Subset B_{k-1}$ for $k=1,2$.	The final ball in the iterative procedure should be the ball $B_\Omega$ from the statement of Theorem \ref{thrm.asympt}. Note that the time-scale in Theorem \ref{thrm.asympt} is exactly the $T$ coming from local smoothing.
	 
	 We record a technical lemma which extends the local regularity properties of $u-a$ to $u-P_0$. 
	 \begin{lemma}\label{lemma.heat} Assume that $u_0\in L^{p,\I}$ for $p\in (2,3)$ and $u_0|_{B}\in L^q(B)$ for some $q\in (3,\I]$. Suppose $u$ is an $L^{p,\I}$-weak solution with data $u_0$. Then, $u-P_0 \in  C^\gamma_{\operatorname{par}}(B_0\times [0,T] )$ for some $\gamma\in (0,1/2)$
	 	and 
	 	\[
	 	| u- P_0|(x,t)\lesssim_{u_0,q,p}  t^{\frac \ga 2} + t^{\frac 1 2 - \frac 3 {2q}}.
	 	\]
	 	Additionally,
	 	\[
	 	\|	P_0\|_{L^\I(0,T;L^q(B_0))}\lesssim_{B_0,B,p,q,u_0} 1
	 	\]
	 \end{lemma}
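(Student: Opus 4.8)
The plan is to compare $u$ with $P_0 = e^{t\Delta}u_0$ through the strong solution $a$ furnished by local smoothing (the same $a$ as in the Zeroth step): applying Theorem~\ref{theorem:JSlocalsmoothing} with exponent $q$, its data $a_0$ is divergence free, compactly supported in $B$, agrees with $u_0$ on a ball $B_0'$ with $B_0 \Subset B_0' \Subset B$, lies in $L^q(\R^3)$ with $\|a_0\|_{L^q} \lesssim \|u_0\|_{L^q(B)}$, and is bounded when $q=\infty$. Since $a$ is a mild solution, $a = e^{t\Delta}a_0 + B(a,a)$, so
\[
u - P_0 = (u - a) + B(a,a) + e^{t\Delta}(a_0 - u_0),
\]
and I would bound the three summands separately on $B_0 \times [0,T]$, then read off the claimed parabolic H\"older regularity and pointwise estimate by summing them and, if needed, decreasing $\gamma$ to the smallest exponent that appears.

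The term $u - a$ is handled by local smoothing itself: Theorem~\ref{theorem:JSlocalsmoothing} gives $u - a \in C^\gamma_{\operatorname{par}}(B_0 \times [0,T])$ for some $\gamma = \gamma(q) \in (0,1)$. Since $u$ and $a$ carry the same data on $B_0' \supset B_0$ we have $(u-a)(\cdot,0) \equiv 0$ on $B_0$, so the parabolic H\"older bound, with the second point taken to be $(x,0)$, forces $|u-a|(x,t) \lesssim t^{\gamma/2}$ there. The term $e^{t\Delta}(a_0 - u_0)$ costs essentially nothing: $a_0 - u_0$ vanishes on $B_0'$ and lies in $L^{p,\infty}$ (as $a_0 \in L^q$ is compactly supported, hence in $L^{p,\infty}$, and $u_0 \in L^{p,\infty}$), so for $x \in B_0$ the defining convolution only sees $y$ with $|x-y| \ge d := \operatorname{dist}(B_0,\partial B_0') > 0$. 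Feeding the Gaussian off-diagonal decay of the heat kernel and of all its space--time derivatives into O'Neil's inequality~\eqref{eqn:Young-O'Neil-type} shows that $e^{t\Delta}(a_0-u_0)$ is $C^\infty$ on $\overline{B_0}\times[0,T]$ and vanishes to infinite order as $t\to 0^+$, so it is harmless for both asserted bounds.

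The substantive piece is $B(a,a)$. Combining the $L^q$--$L^r$ smoothing bounds for $e^{t\Delta}a_0$ coming from~\eqref{ineq.aaa} and~\eqref{ineq.oseen}, the subcritical persistence $\sup_{[0,T]}\|a(t)\|_{L^q}\lesssim\|a_0\|_{L^q}$, and the $L^r$--$L^s$ estimates for $e^{t\Delta}\mathbb P\nabla\cdot$ induced by~\eqref{ineq.oseen}, one obtains $\|B(a,a)(t)\|_{L^q(\R^3)}\lesssim t^{1/2 - 3/(2q)}\|a_0\|_{L^q}^2$ (and directly $\|B(a,a)(t)\|_{L^\infty(\R^3)}\lesssim t^{1/2}\|a_0\|_{L^\infty}^2$ when $q=\infty$, where $a$ is bounded on $[0,T]$). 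Upgrading this to the pointwise bound on $B_0$, and to parabolic H\"older regularity there, is interior parabolic regularity applied to $\partial_t B(a,a) - \Delta B(a,a) = -\mathbb P\nabla\cdot(a\otimes a)$ with locally controlled right-hand side. For the last assertion, $\|P_0\|_{L^\infty(0,T;L^q(B_0))}\lesssim 1$, pick $\chi\in C_c^\infty(B)$ with $\chi\equiv 1$ on $B_0'$ and split $u_0 = \chi u_0 + (1-\chi)u_0$: the first part is in $L^q(\R^3)$ with norm $\lesssim\|u_0\|_{L^q(B)}$ and $e^{t\Delta}$ is a contraction on $L^q$, while the second vanishes on $B_0'$, so the off-support heat-kernel estimate of the previous paragraph gives $\sup_{t\in(0,T]}\|e^{t\Delta}((1-\chi)u_0)\|_{L^\infty(B_0)}\lesssim_{d,p,T}\|u_0\|_{L^{p,\infty}}$ (the factor $e^{-d^2/(8t)}$ absorbing the $t^{-3/(2p)}$ growth of $\|G_t\|_{L^{p/(p-1)}}$).

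The step I expect to be the real obstacle is the pointwise $L^\infty(B_0)$ control of $B(a,a)$ in the genuinely subcritical case $q<\infty$: since $a_0 = u_0$ on $B_0'\supset B_0$ and $u_0|_B$ is only in $L^q$, the solution $a$ is merely locally $L^q$-bounded near $B_0$ as $t\to 0$ rather than $L^\infty$-bounded, so the $L^\infty(B_0)$ bound on $B(a,a) = a - e^{t\Delta}a_0$ must be extracted by a careful interior parabolic-regularity argument for the forced heat equation $\partial_t B(a,a) - \Delta B(a,a) = -\mathbb P\nabla\cdot(a\otimes a)$ rather than by estimating in $L^\infty$ directly; this is the source of the ``technicalities'' flagged after Corollary~\ref{cor.differentData}. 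When $q=\infty$ this obstacle disappears, and the remainder --- the decomposition, the local-smoothing input, the off-support Gaussian decay, and the mild-solution bookkeeping --- is routine.
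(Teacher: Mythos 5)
Your decomposition $u - P_0 = (u-a) + B(a,a) + e^{t\Delta}(a_0 - u_0)$ is exactly the paper's (since $a - e^{t\Delta}a_0 = B(a,a)$ for a mild solution), and your treatment of each summand coincides with the paper's: parabolic H\"older regularity of $u-a$ with vanishing initial trace on $B_0$ from Theorem~\ref{theorem:JSlocalsmoothing}, off-support Gaussian decay for $e^{t\Delta}(a_0-u_0)$ (the paper works this out in Lorentz norms and gets the factor $t^{-3/2}e^{-r^2/(4t)}$, which matches your claim of infinite-order vanishing), and the $\chi u_0 + (1-\chi)u_0$ split for the $L^q(B_0)$ statement. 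The only difference in presentation is the middle term: the paper dispatches $a - e^{t\Delta}a_0 \in C^\gamma_{\operatorname{par}}(B_0\times[0,T])$ by simply citing \cite[(3.3)]{BP2} rather than reproving the bilinear estimate, so the subtlety you correctly flag for $q<\infty$ (passing from an $L^q(\R^3)$ bound on $B(a,a)$ to an $L^\infty(B_0)$ bound via interior parabolic regularity) is a step the paper outsources to that reference, and in the case $q=\infty$ actually used in Theorem~\ref{thrm.asympt} it is, as you note, immediate.
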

	 Note that in Theorem \ref{thrm.asympt} we only consider $q=\I$ but this lemma applies to any $q\in (3,\I]$. As is elaborated on in Remark \ref{remark.generalCase}, Theorem \ref{thrm.asympt} can be generalized to the case of $q<\I$ using the full statement included here.
	 
	 \begin{proof}  	The details of this when $p=3$ are worked out at the beginning of  \cite[Proof of Theorem 1.3]{BP2}, and some of these observations are directly applicable here. 
	 	We can write $u-P_0$ as 
	 	\[
	 	u-P_0 = u-a +a-e^{t\Delta}a_0+e^{t\Delta}a_0-P_0.
	 	\]
	 	We have $ u-a  \in  C^\gamma_{\operatorname{par}}(B_0\times [0,T]  )$ by assumption while $a-e^{t\Delta}a_0 \in C^\gamma_{\operatorname{par}}(B_0\times [0,T]  )$ by \cite[(3.3)]{BP2}. The remaining term  is
	 	\[
	 	e^{t\Delta}a_0-P_0 = e^{t\Delta}(a_0-u_0). 
	 	\]
	 	We only care about $x\in B_0$ and note that $a_0-u_0=0$ in $B_0'$. Then 
	 	\EQ{\label{heat.helpful}
	 		e^{t\Delta}(a_0-u_0)(x)&\lesssim \frac 1 {t^{3/2}} \int_{y\notin B_0'} e^{-| x-y |^2/(4t)}  |a_0-u_0|\,dy
	 		\\& \lesssim t^{-3/2} \| a_0-u_0\|_{L^{p,\I}} \bigg\|  \|		e^{-| x-y |^2/(4t)} (1-\chi_{B_0'})	\|_{L^{ p', 1}}	\bigg\|_{L^\I (B_0)}
	 	}
	 	where $p'$ is the H\"older conjugate of $p$. Let $r$ be the distance between the boundaries of $B_0$ and $B_0'$. 
	 	We claim that 
	 	\EQ{\left\|\|e^{-\frac{|x-y|^2}{4t}}(1-\chi_{B_0'})\|_{L^{p',1}_y}\right\|_{L^\I_x(B_0)}\lesssim_{r,p} e^{\frac{-r^2}{4t}}.}
	 	For simplicity take the balls to be centered at $0$. Let $R>0$ satisfy $B_0'  = B(0,R)$. 
	 	Then, recalling that $x\in B_0$,
	 	\EQ{\|e^{-\frac{|x-y|^2}{4t}}(1-\chi_{B_0'})\|_{L^{p',1}_y}
	 		&= p' \int_0^\I \mu \left\{ y:e^{-\frac{|x-y|^2}{4t}}(1-\chi_{B_0'}(y)) \ge s\right\}^{\frac 1 {p'}}\,ds,
	 	}
	 	where $\mu$ is Lebesgue measure. 
	 	Note that the above set can be written as \[A(x,s)=\{y:|x-y|\le \sqrt{-4t\ln(s)},\, |y|>R\}=B(x,(-4t\ln(s))^{\frac 1 2}) \setminus B_0',\] which is well-defined because $t\ge 0$ and $0< s\le e^{-r^2/(4t)}<1$. Then, \EQ{\left\|\|e^{-\frac{|x-y|^2}{4t}}(1-\chi_{B_0'}(y))\|_{L^{p',1}_y}\right\|_{L^\I_x(B_0)}
	 			&\lesssim \left\|\int_0^\I 
	 			\mu (A(x,s))^{\frac 1 {p'}}\, ds\right\|_{L^\I_x(B_0)}\\
	 			&\lesssim \int_0^{e^{\frac{-r^2}{4t}}} |-4t\ln(s)|^{3/(2p')} \, ds.
	 			} 
	 	Note that $ 1 < |4t\ln (s)/r^2|<\I$. Then
	 	\[
	 	\int_0^{e^{\frac{-r^2}{4t}}} |-4t\ln(s)|^{3/(2p')} \, ds\leq  	\int_0^{e^{\frac{-r^2}{4t}}}  r^{3/p'} \frac {|4t\ln(s)|}{r^2}  \, ds\lesssim_{r,T} e^{\frac{r^2}{4t}}.
	 	\]
	 	Hence,
	 	\EQ{\label{helpful2}
	 		|e^{t\Delta}(a_0-u_0)(x)|&\lesssim_{r,T}  t^{-3/2}   e^{\frac{-r^2}{4t}}\| a_0-u_0\|_{L^{p,\I}}   \lesssim_{r,T} t^{\frac \ga 2} \| a_0-u_0\|_{L^{p,\I}} ,
	 	} 
	 	where we used the fact that the exponential part of the pre-factor decays rapidly as $t\to 0^+$. Since $T=T(u_0)$, we can ignore the dependence on $T$. 
	 	
	 	We now prove the statement about $ \|	P_0\|_{L^\I(0,T;L^q(B_0))}$. Note that 
	 	\[
	 	P_0  = e^{t\Delta} (  u_0\chi_{B} ) + e^{t\Delta} (  u_0(1-\chi_{B}) ).
	 	\]
	 	we plainly have 
	 	\[
	 	\|  e^{t\Delta} (  u_0\chi_{B} )  \|_{L^\I(0,T;L^q(\R^3))}\leq  \| u_0 \chi_{B} \|_{L^q}.
	 	\]
	 	On the other hand, for $x\in B_0$, the above estimate for $e^{t\Delta}(a_0-u_0)$ implies  
	 	\[
	 	e^{t\Delta} (  u_0(1-\chi_{B}) ) \in L^\I (0,T;L^\I(B_0)),
	 	\]
	 	which implies the advertised inclusion.
	 \end{proof}

	We conclude the zeroth step with the observation that, by Theorem \ref{theorem:JSlocalsmoothing} and Lemma \ref{lemma.heat},
	 \[
 {	 \sup_{x\in B_0} |u-P_0|(x,t)\lesssim   t^{\min \{  \frac \ga 2, \frac 1 2 \}}=  t^{\frac \ga 2},\quad t\in (0,T).}
	 \]

	 \bigskip \noindent \underline{First step}:
	 It is convenient to re-define $B$ as
	 \[
	 B(f,g) = -\int_0^t e^{(t-s)\Delta} \mathbb P\nabla \cdot \bigg(\frac 1 2 f\otimes g + \frac 1 2 g\otimes f   \bigg),
	 \]
	 which is a symmetric operator. We still have $P_k = P_0 +B(P_{k-1},P_{k-1})$ and $u=P_0 +B(u,u)$. We take this as our definition for the rest of the proof and will then convert to the non-symmetric operator at the end of the proof to match the notation in the introduction. 
	 Since $u$ is mild we have
	 \[
	 u-P_1= u-P_0 -B(P_0,P_0) = B(u,u)-B(P_0,P_0)=B(u-P_0,u-P_0) + 2B(P_0,u-P_0).
	 \]
	 Let $B_1$ be a ball nested inside $B_0$, with the distance between the boundaries of $B_0$ and $B_1$ being $r_1>0$.

	 We bound $B(u-P_0,u-P_0)$ and $2B(P_0,u-P_0)$ separately. First let  $\chi_k= \chi_{B_k}$ with $k=1$ (the same convention will apply to   $B_2$ and $B_3$ later) and consider
	 \[
	 B(u-P_0,u-P_0)(x,t) = B(u-P_0,(u-P_0)\chi_0)(x,t) + B(u-P_0,(u-P_0)(1-\chi_0))(x,t),
	 \]
	 where $(x,t)\in B_1 \times [0,T]$.
	 We have by the billinear estimate that, for the near-field term,
	 \[
	 |B(u-P_0,(u-P_0)\chi_0)(x,t)|\lesssim  \int_0^t \frac 1 {(t-s)^{1/2}} s^{2\ga} \,ds \lesssim t^{1/2 + 2\gamma }.
	 \]
	 On the other hand for the far-field part we have 
	 \[
	 |B(u-P_0,(u-P_0)(1-\chi_0))(x,t)| \leq \int_0^t\int_{y\notin B_0}\frac 1 {(|x-y|+\sqrt{t-s})^4} (u-P_0)^2 \,dx\,ds \leq \frac 1 {r_1^4} t^{1+\sigma(p)},
	 \]
	 where we used Theorem \ref{thrm.aprioribound}.

	 Turning now to the other term we have 
	 \[
	 | B(P_0 \chi_0,u-P_0)(x,t)| \lesssim \int_0^t \frac 1 {(t-s)^{1/2}}  \| P_0 \chi_0\|_{L^\I}(s) \|  u-P_0\|_{L^\I(B_0)}(s)\,ds\lesssim  t^{1/2 + \gamma/2},
	 \]
	 where we are using the fact that $P_0$ is bounded on $B_0\times [0,T]$---this is from Lemma \ref{lemma.heat} with $q=\I$---as well as the conclusion of the zeroth step to bound $\|  u-P_0\|_{L^\I(B_0)}(s)$.
	 Additionally we have  
	 \EQ{\label{ineq.helpful}
	 	| B(P_0 (1-\chi_0),u-P_0)(x,t)|&\lesssim_{u_0}  \int_0^t\int_{y\notin B_0}\frac 1 {(|x-y|+\sqrt{t-s})^4} (u-P_0) P_0 (x,s)\,dx\,ds
	 	\\&\lesssim \bigg\| \frac 1 {(|\cdot|+1)^4} \bigg\|_{L^{r_1}(0,t;L^{q_1,q_2})} \| u-P_0\|_{L^r(0,t;L^q)} \| P_0\|_{L^{\I}(0,t;L^{p,\I})},
	 }
	 where $q\in (3/2,3)$, $	r = \frac {2q} {2q-3}$ and
	 \[
	 1 = \frac 1 r_1+ \frac 1 {r}\text{; }1 =\frac 1 q_1 +\frac 1 q +\frac 1 p\text{; }1= \frac 1 {q_2}+ \frac 1 q.
	 \] 
	 We have using Proposition \ref{prop.LpinftyDecaySpacteTime} that 
	 \EQ{
	 \bigg\| \frac 1 {(|\cdot|+1)^4} \bigg\|_{L^{r_1}(0,t;L^{q_1,q_2})} \| u-P_0\|_{L^r(0,t;L^q)} \| P_0\|_{L^{\I}(0,t;L^{p,\I})} &\lesssim t^{\sigma(p) +\frac  1 {r_1}}.
	 }
	Note that $\frac 1 {r_1} = \frac 3 {2q}$ so that 
	\[
	\lim_{q\to 3/2^+} \frac 1 {r(q)} = 1.
	\]
	We conclude that, given $\delta\in (0,1)$ and taking $q>3/2$ close to $3/2$, 
	\[
		| B(P_0 (1-\chi_0),u-P_0)(x,t)| \lesssim_\delta t^{\sigma(p) +1 -\delta}.
	\]

	 Taken together the preceding estimates imply that
	 \[ { \sup_{x\in B_1}|u - P_1|(x,t) \lesssim t^{ \frac 1 2 +  \frac \gamma 2  },\quad t\in (0,T),}\]
	 and, more to the point,
	 \[
	 u = P_1 + O( t^{ \frac 1 2 + \min \{  \frac \ga 2, \frac 1 2  \}} ) +O(t^{1+\sigma(p)-\delta}) +O(t^{1+\sigma(p)}).
	 \]

	 \bigskip \noindent \underline{Second step}: 
	 We expand $u-P_1$ further as follows
	 \EQN{
	 	u-P_1 &=B(u-P_0,u-P_0) + 2B(P_0,u-P_0)
	 	\\&= B(u-P_0,(u-P_0)\chi_1)+B(u-P_0,(u-P_0)(1-\chi_1)) 
	 	\\&\quad +2B(P_0,(u-P_0) \chi_1) + 2B(P_0,(u-P_0)(1-\chi_1)).
	 } 
	 
	 First consider $2B(P_0,(u-P_0)(1-\chi_1))  $ and $B(u-P_0,(u-P_0)(1-\chi_1))$. These can be viewed as far-field contributions to the activity in $B_2\Subset B_1$.  Assuming that $x\in B_2$, we have by the same estimates on the far-field terms in the first iteration (see, e.g., \eqref{ineq.helpful}), that 
	 \[
	 2B(P_0,(u-P_0)(1-\chi_1))   \lesssim t^{1+\sigma-\delta},
	 \]
	 and 
	 \[
	 B(u-P_0,(u-P_0)(1-\chi_1))\lesssim t^{1+\sigma}.
	 \]
	 These terms are already decaying at the desired rate as $t\to 0^+$ and will therefore be left to hang out on the right-hand side of our expansion, in particular, for $x\in B_2$ we now have the expansion,
	 \EQN{
	 	u-P_1  &= B(u-P_0,(u-P_0)\chi_1) 
	 	+2B(P_0,(u-P_0) \chi_1) + O(t^{1+\sigma-\delta}).
	 } 
	 
	 We now manipulate the local terms to extract a refined asymptotic expansion. We have 
	 \EQN{
	 	&B(u-P_0,(u-P_0)\chi_1)+2B(P_0,(u-P_0) \chi_1) 
	 	\\&= B(u-P_0-B(P_0,P_0),(u-P_0)\chi_1) + B(B(P_0,P_0),(u-P_0)\chi_1) 
	 	\\&\quad +2B(P_0,(u-P_0 - B(P_0,P_0)) \chi_1) + 2B(P_0, B(P_0,P_0) \chi_1).
	 }

	 Note that $u-P_0-B(P_0,P_0)=u-P_1$ which simplifies the above expression.  Repeating this trick we obtain,
	 \EQN{ 
	 	&B(u-P_0,(u-P_0)\chi_1)+2B(P_0,(u-P_0) \chi_1) 
	 	\\&=B(u-P_1,(u-P_0)\chi_1)+B(B(P_0,P_0),(u-P_0-B(P_0,P_0))\chi_1)+B(B(P_0,P_0),B(P_0,P_0))\chi_1)  
	 	\\&\quad +2B(P_0,(u-P_1) \chi_1) + 2B(P_0,(  B(P_0,P_0) \chi_1)
	 	\\&=B(u-P_1,(u-P_0)\chi_1)+B(B(P_0,P_0),(u-P_1)\chi_1)+B(B(P_0,P_0),B(P_0,P_0))\chi_1)  
	 	\\&\quad +2B(P_0,(u-P_1) \chi_1) + 2B(P_0,  B(P_0,P_0) \chi_1)
	 }
	 Some of these terms already have decay greater than $t^{1+\sigma}$. In particular, noting that for $x\in B_0$, 
	 \[
	 |B(P_0,P_0)|(x,t)\lesssim t^{1/2}.
	 \]
	 To see this we do the usual near- and far-field split. Then,
	 \EQN{
	 	 |B(P_0,P_0)|(x,t) &\lesssim \int_0^t\int_{B_0'} \frac 1 {(t-s)^{1/2}} \|P_0\|_{L^\I}^2 + \int_0^t \int \frac C {(|x-y|-\sqrt{t-s})^4} |P_0|^2(y,s)\,dy\,ds
	 	 \\&\lesssim_{u_0,B_0, B_0'} t^{1/2} +  t .
	 }
	 Using the estimate on $B(P_0,P_0)$ we can quickly deduce the following bounds,
	 \begin{align*}
	 	& B(u-P_1,(u-P_0)\chi_1) \lesssim t^{\frac 1 2 + \frac 1 2 +\frac \gamma 2+\frac \gamma 2},
	 	\\& B(B(P_0,P_0),(u-P_1)\chi_1) \lesssim t^{\frac 1 2 +\frac 1 2 +\frac 1 2+\frac \gamma 2},
	 	\\&B(B(P_0,P_0),B(P_0,P_0))\chi_1)  \lesssim t^{\frac 1 2 +\frac 1 2 +\frac 1 2},
	 	\\&2B(P_0,(u-P_1) \chi_1) \lesssim t^{\frac 1 2 +\frac 1 2 +\frac \ga 2},
	 	\\& 2B(P_0,  B(P_0,P_0) \chi_1) \lesssim t^{\frac 1 2+\frac 1 2},
	 \end{align*}
	 which hold for all $x\in \R^3$ and $t\in (0,T)$.
	 The second and third terms are decaying faster than our target decay rate and are therefore absorbed in the $O(t^{1+\sigma-\delta})$ term on the right-hand side of our asymptotic expansion. 
	One of the remaining terms is independent of $u$. Label this $\td P_2$, i.e.,
	 \[
	 \td P_2:=  2B(P_0,  B(P_0,P_0) \chi_1).
	 \]
	 We therefore have, for $x\in B_2$ and $t\in (0,T)$, that
	 \EQN{
	 	&u - P_1 -\td P_2 = B(u-P_1,(u-P_0)\chi_1) 
	 	+2B(P_0,(u-P_1) \chi_1)  +O(t^{1+\sigma-\delta}).
	 }
	 Put differently, in $B_2$ we have the asymptotic expansion
	 \[
	 u =\underbrace{ P_1+\td P_2}_{\text{independent of $u$}} +\underbrace{ O( t^{1 +  \frac \gamma 2} ) +O(t^{1+\sigma-\delta}) }_{\text{dependent on $u$}}.
	 \]
	 It is not clear what the relationship is between $\sigma$ and $\gamma/2$. Hence we must complete one more iteration of our argument to get the desired bounds.

	 \bigskip \noindent \underline{Final step}:
	 In this step we consider the local terms on the right-hand side of the expansion at the end of the second step, namely
	 \[
	 B(u-P_1,(u-P_0)\chi_1) 
	 +2B(P_0,(u-P_1) \chi_1).\]
	 It turns out these terms are already decaying at least as fast as the target rate.
	 For example, we re-write the first of these terms as
	 \EQN{
	 	B(u-P_1,(u-P_0)\chi_1) = \underbrace{B(u-P_1,(u-P_0)(\chi_1-\chi_2))}_{\text{``far-field''}}  + \underbrace{B(u-P_1,(u-P_0) \chi_2)}_{\text{near-field}}.
	 }
	 Observe that for $(x,t)\in B_1\times [0,T]$ we have that $u-P_1$ is bounded. Hence, modifying somewhat the argument in \eqref{ineq.helpful}, the ``far-field'' contribution satisfies, for $x\in B_3\Subset B_2$,  
	 \EQN{
	 	|B(u-P_1,(u-P_0)(\chi_1-\chi_2))|(x,t)\lesssim t^{1+\frac 1 2 +\frac \ga 2 +\frac \ga 2} \lesssim t^{1+\sigma-\delta}.
	 }
	 The other term will also be decomposed by writing $\chi_1$ as $\chi_1-\chi_2 +\chi_2$.  We have for its ``far-field'' part that
	 \EQN{ \label{ineq.limiting2}
	 	&|B(P_0,(u-P_1)(\chi_1 -\chi_2)) |(x,t)   \lesssim t^{1 +\frac 1 2 +  \frac \gamma 2},
	 }
	 where, again, $x\in B_3$.
	 Note that $1+\sigma -\delta < 3/2+\gamma/2$. 
	 It is critical in the preceding estimates that there is distance between   $B_3$ and $\partial B_2$, which implies a lower bound on $|x-y|$ that shows up in the suppressed constants.
	 The above estimates show that the ``far-field'' terms can again be lumped into the $O(t^{1+\sigma-\delta})$ term in the asymptotic expansion which leads us to the revised expansion
	 \EQN{
	 	u - P_1 -\td P_2 &= B(u-P_1,(u-P_0)\chi_2) 
	  +2B(P_0,(u-P_1) \chi_2)  +O(t^{1+\sigma-\delta}).
	 }
	 It remains  to show that the local terms are also $O(t^{1+\sigma-\delta})$. Let us begin by examining $B(P_0,(u-P_1) \chi_2)$. For this we expand again as 
	 \[
	 B(P_0,(u-P_1) \chi_2) = B(P_0,(u-P_1-\td P_2) \chi_2)+ B(P_0,\td P_2 \chi_2).
	 \]
	 Both of these terms are decaying faster than our target rate, a fact we now confirm.
	 Note that for $x\in B_2$,  
	 \[
	 	|\td P_2 (x,t)|\lesssim t.
	 \] 
	 The explanation for this is almost identical to the bounding procedure for $B(P_0,P_0)$  in the preceding step.
	 Hence, 
	 \[
	| B(P_0,\td P_2 \chi_2) (x,t)|\lesssim t^{\frac 1 2 + 1}.
	 \]
	 On the other hand, 
	 \[
	 |B(P_0,(u-P_1-\td P_2) \chi_2) |\lesssim t^{\frac 1 2 + 1+\frac \ga 2}.
	 \]
	 The above estimates are true for any $x\in \R^3$.
	 For the remaining term we have, for any $x\in \R^3$,
	  \EQN{
	| B(u-P_1,(u-P_0)\chi_2)| &= |B( u-P_1-\tilde P_2 ,(u-P_0)\chi_2) + B(\td P_2 ,(u-P_0)\chi_2)|
	\\&\lesssim t^{\frac 12+1+\frac \ga 2+\frac \ga 2}+ t^{\frac 1 2 +1 +\frac \ga 2}.
	}
	Finally, let $B_\Omega=B_3$ and re-write $\td P_2$ in terms of the non-symmetric version of $B(\cdot,\cdot)$.
	This completes the proof of Theorem \ref{thrm.asympt}.

	 \begin{remark}
	 	\label{remark.generalCase} 
	 	We now discuss how this result could be generalized under the assumption that $u_0\in L^q(B)$ for $3<q<\I$. One difference is that   $t^{3/2 - 3/(2q)}$ would become another limiting rate which would need to be compared to $t^{1+\sigma-\delta}$. More iterations would also be needed because the incremental gain with each step would be $1/2-3/(2q)$ as opposed to $1/2$.   The end result would be a statement like
	 	\[
	 	u-P_\Omega = O(t^{\min\{ 	1+\sigma-\delta,3/2 - 3/(2q)	\}}),
	 	\]
	 	in $B_\Omega\times (0,T)$.
	 \end{remark}

	 We conclude this section with a proof of  Corollary \ref{cor.differentData}.
	 \begin{proof}[Proof of Corollary \ref{cor.differentData}] 
	 	In the notation of Theorem \ref{thrm.asympt} for $u_0$ we let $P_\Om(u_0) =P_\Om $    indicate the dependence on the initial data with the same convention for $P_1$ and $\td P_2$. Since, for $x\in B_\Om$ and $t\in (0,T)$,
	 	\[
	 		|u- P_1(u_0)-\td P_2(u_0) |=  O(t^{1+\sigma - \delta}),
	 	\] 
	 Therefore,
	 	\EQN{
	 	|u-v |(x,t)&\lesssim |u-P_1(u_0)-\td P_2(u_0)| +|P_1(u_0)+\td P_2(u_0)-P_1(v_0)-\td P_2(v_0)|+|v-P_1(v_0)-\td P_2(v_0)|
	 	\\&= O(t^{1+\sigma-\delta}) + |P_1(u_0)+\td P_2(u_0)-P_1(v_0)-\td P_2(v_0)|.
	 }
	 To complete the proof we just need to find an optimal estimate on $|P_1(u_0)+\td P_2(u_0)-P_1(v_0)-\td P_2(v_0)|$. 
	  We already know that for $x\in B_2$ we have
	 	\[
	 |	\td P_2(u_0) | +|	\td P_2(v_0) |\lesssim t. 
	 	\]
	 	This is the estimate asserted in the corollary so we are done with these terms.
	 	
	 	Observe that $P_1(u_0)-P_1(v_0) = e^{t\Delta} (u_0-v_0) + B(P_0(u_0),P_0(u_0)) - B(P_0(v_0),P_0(v_0))$.  Because $u_0=v_0$ in $B$ we can show as we did in \eqref{heat.helpful}  that, for $x\in B_1$,
	 	\[
			 |	e^{t\Delta} (u_0-v_0) |\lesssim t^{3/2}. 
	 	\]In fact, the above can be replaced by any power of $t$---see \eqref{helpful2}. 
	 	For $B(P_0(u_0),P_0(u_0)) - B(P_0(v_0),P_0(v_0))$, we expand this in the usual way to end up with the term
	 	\[
	 	 B(   P_0(u_0)-P_0(v_0),P_0(u_0) ),
	 	\]
	 	and another term which is treated identically and is therefore omitted. We have, 
	 	\[
	 	B(   P_0(u_0)-P_0(v_0),P_0(u_0) ) = B(   P_0(u_0)-P_0(v_0),P_0(u_0)\chi_{B_1} )+B(   P_0(u_0)-P_0(v_0),P_0(u_0) (1-\chi_{B_1})).
	 	\]
	 	For $x\in B_2$, the local term decays as desired due to the rapid decay of $ P_0(u_0)-P_0(v_0)$ which we've just proven in $B_1$. 
	 	For the far-field term, we are looking at
	 	\EQN{
	 	&|B(   P_0(u_0)-P_0(v_0),P_0(u_0) (1-\chi_{B_1}))|
	 	\\&\lesssim \int_0^t \int \frac 1 {(|x-y|+\sqrt{t-s})^4} (P_0(u_0)-P_0(v_0))P_0(u_0) (1-\chi_{B_1})\,dy\,ds.
	 	}
	 	Since there is space between the boundaries of $B_1$ and $B_2$, the spatial integral is bounded by $C (\| u_0\|_{L^{p,\I}}+\|v_0\|_{L^{p,\I}})^2$ but does not vanish as $t\to0$. Hence, the best upper bound we can get is
	 	\EQN{
	 		|B(   P_0(u_0)-P_0(v_0),P_0(u_0) (1-\chi_{B_1}))| \lesssim_{B_1,B_2,u_0,v_0} t.
	 	}

	 \end{proof}

	 \section{Time regularity}\label{sec.reg}
	 
	 In this section we establish our time regularity result, Theorem \ref{thrm.time-reg}. 
	 Our first lemma in this direction is an elementary statement about the heat equation.  Recall that 
	 \[
	 (-\Delta)^s f(x) = p.v.\int \frac { f(x)-f(y)}{|x-y|^{3+2s}}\,dy, 
	 \]
	 is the fractional Laplacian. Let $\Lambda = (-\Delta)^{\frac 1 2}$.
	 \begin{lemma}\label{lemma.heat.init}
	 	Assume $u_0\in L^2$ and $\Lambda^{\frac 3 2-\frac 3 p}u_0\in L^2$. Fix $p\in (2,3)$ and $T>0$. Then,  for $0<t<T$,
	 	\[
	 	\| e^{t\Delta}u_0 - u_0\|_{L^2}\lesssim t^{\frac 3 2 ( \frac 1 2 - \frac 1 p)} \| \Lambda^{\frac 3 2-\frac 3 p}u_0\|_{L^2}\lesssim_T  t^{\frac {\sigma(p)} 2} \| \Lambda^{\frac 3 2-\frac 3 p}u_0\|_{L^2}.
	 	\] 
	  Alternatively, fix $p\in (2,3]$ and assume  $u_0\in L^2$ and $\Lambda^{\frac 3 2-\frac 3 p}u_0\in L^{2,\I}$. Then, for a sufficiently small $\epsilon=\epsilon(\sigma(p)  )>0$ if $2<p<3$ or $\epsilon =\epsilon(\sigma(p)-)>0$  if $p=3$ and for any $t\in (0,T)$,
	 	\[
	 	\| e^{t\Delta}u_0 - u_0\|_{L^{2+\epsilon}}\lesssim_\epsilon t^{\frac 3 2 ( \frac 1 {2+\epsilon} - \frac 1 p)} \| \Lambda^{\frac 3 2-\frac 3 p}u_0\|_{L^{2,\I}} \lesssim_\epsilon
	 	 \begin{cases}
	 		t^{\sigma/2} \| \Lambda^{\frac 3 2-\frac 3 p}u_0\|_{L^{2,\I}} & \text{ if }2<p<3
	 		\\ 	t^{\sigma-/2 }\| \Lambda^{\frac 3 2-\frac 3 p}u_0\|_{L^{2,\I}} & \text{ if }p=3
	 	\end{cases}.
	 	\]
	 \end{lemma}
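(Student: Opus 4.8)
Both assertions are stability statements for the heat semigroup at $t=0$, and I would prove each by moving to a representation on which the gain of a positive power of $t$ is transparent: the Fourier side for the $L^2$ bound, and an explicit convolution kernel for the $L^{2,\infty}\to L^{2+\epsilon}$ bound. For the first estimate, set $\alpha=\tfrac32(\tfrac12-\tfrac1p)$; since $p\in(2,3)$ we have $\alpha\in(0,\tfrac14)\subset[0,1]$. By Plancherel,
\[
\|e^{t\Delta}u_0-u_0\|_{L^2}^2=\int_{\R^3}\bigl|e^{-t|\xi|^2}-1\bigr|^2\,|\widehat{u_0}(\xi)|^2\,d\xi,
\]
and the elementary bound $|e^{-x}-1|\le x^{\alpha}$ for $x\ge0$ (interpolating $|e^{-x}-1|\le1$ and $|e^{-x}-1|\le x$) gives $|e^{-t|\xi|^2}-1|^2\le t^{2\alpha}|\xi|^{4\alpha}$, so the right side is $\le t^{2\alpha}\int|\xi|^{4\alpha}|\widehat{u_0}|^2=t^{2\alpha}\|\Lambda^{2\alpha}u_0\|_{L^2}^2$; since $2\alpha=\tfrac32-\tfrac3p$ this is exactly the claimed bound. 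For the second inequality one needs only $\alpha\ge\sigma(p)/2$, i.e.\ $\tfrac32-\tfrac3p\ge\sigma(p)$, which is the inequality already proved at the end of the proof of Proposition~\ref{prop.LpinftyDecaySpacteTime}; then $t^{\alpha}\le T^{\alpha-\sigma(p)/2}t^{\sigma(p)/2}$ for $0<t<T$.

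For the second estimate, put $\beta:=\tfrac32-\tfrac3p\in(0,\tfrac32)$ and $v_0:=\Lambda^{\beta}u_0\in L^{2,\infty}$, so that $u_0=I_{\beta}v_0$ is a Riesz potential. Convolving the kernel $c_\beta|x|^{\beta-3}$ of $I_\beta$ with the Gaussian $g_t$ gives $e^{t\Delta}u_0-u_0=c_\beta\,H_t*v_0$, where $H_t:=g_t*|\cdot|^{\beta-3}-|\cdot|^{\beta-3}$, and a scaling computation yields $H_t(w)=t^{(\beta-3)/2}H_1(w/\sqrt t)$. I would then record two qualitative facts about $H_1$: near the origin $H_1(w)\sim-|w|^{\beta-3}$, because $g_1*|\cdot|^{\beta-3}$ is smooth there, and $|w|^{\beta-3}\in L^r_{\mathrm{loc}}$ exactly for $r<r_0:=3/(3-\beta)=\tfrac{2p}{p+2}$; and for $|w|\to\infty$ one has $H_1(w)=O(|w|^{\beta-5})$, because $\int g_1(z)\,z\,dz=0$ kills the linear Taylor term and the second-order remainder is controlled by the finite second moment of $g_1$ (the far tail being exponentially small). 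Since $r_0>1$ whenever $p>2$, this gives $H_1\in L^r(\R^3)$ for every $r\in[1,r_0)$.

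Now apply O'Neil's convolution inequality \cite{ONeil} with $f=H_t\in L^r$, $g=v_0\in L^{2,\infty}$ and $\tfrac1r=\tfrac12+\tfrac1{2+\epsilon}$; this choice is admissible precisely when $r<r_0$, i.e.\ when $2+\epsilon<p$ (equivalently $\epsilon<p-2$), and it keeps us away from the endpoint cases since $r<2$ forces $\tfrac1r+\tfrac12>1$ and $2+\epsilon<\infty$. It gives $e^{t\Delta}u_0-u_0\in L^{2+\epsilon,r}\hookrightarrow L^{2+\epsilon}$ with $\|e^{t\Delta}u_0-u_0\|_{L^{2+\epsilon}}\lesssim\|H_t\|_{L^r}\|v_0\|_{L^{2,\infty}}$, and $\|H_t\|_{L^r}=t^{(\beta-3)/2+3/(2r)}\|H_1\|_{L^r}$ with $(\beta-3)/2+3/(2r)=\tfrac32(\tfrac1{2+\epsilon}-\tfrac1p)$, which is the displayed bound. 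For the comparison with $\sigma(p)/2$: when $2<p<3$ the exponent at $\epsilon=0$ equals $\tfrac32(\tfrac12-\tfrac1p)>\sigma(p)/2$ strictly, so by continuity there is $\epsilon(\sigma(p))\in(0,p-2)$ with $\tfrac32(\tfrac1{2+\epsilon}-\tfrac1p)\ge\sigma(p)/2$ for all smaller $\epsilon$, and the surplus power of $t$ is absorbed into a $T$-dependent constant; when $p=3$ one has $\tfrac32(\tfrac12-\tfrac13)=\sigma(3)/2$ exactly, so every $\epsilon>0$ gives an exponent strictly below $\sigma(3)/2$ but above any prescribed $(\sigma(3)-)/2$ once $\epsilon$ is small, which is the source of the ``$\sigma(p)-$'' there.

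The main obstacle is the kernel analysis of $H_1$: one must keep track that $H_1$ only lies in the weak space $L^{r_0,\infty}$ at the endpoint and in genuine $L^r$ strictly below $r_0$, which is what forces the strict requirement $2+\epsilon<p$ and, at $p=3$, the unavoidable loss of $\epsilon$; and the decay at infinity must be extracted from the vanishing first moment of the Gaussian rather than from crude size estimates. Everything else is bookkeeping with scaling exponents and O'Neil's inequality. (One could equivalently run the argument through the identity $e^{t\Delta}u_0-u_0=-\int_0^t\Lambda^{2-\beta}e^{s\Delta}v_0\,ds$ together with the standard heat estimate $\|\Lambda^{2-\beta}e^{s\Delta}v_0\|_{L^{2+\epsilon}}\lesssim s^{-(2-\beta)/2-\frac32(\frac12-\frac1{2+\epsilon})}\|v_0\|_{L^{2,\infty}}$, the time integral converging under the same condition $2+\epsilon<p$.)
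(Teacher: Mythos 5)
Your proof is correct, and in both parts it takes a genuinely different route from the paper. For the $L^2$ part, the paper rewrites $e^{t\Delta}u_0-u_0$ as a Duhamel solution of the forced heat equation with source $\Delta u_0$ and invokes the smoothing estimate $\|\Lambda^{s}e^{(t-s)\Delta}f\|_{L^2}\lesssim (t-s)^{-s/2}\|f\|_{L^2}$, whereas you work on the Fourier side with the pointwise bound $|e^{-x}-1|\le x^{\alpha}$; your version is shorter and more elementary, avoids checking that the Duhamel singularity is integrable (and also sidesteps a minor typo in the paper's choice of $s$, which should read $s=\tfrac12+\tfrac3p$ so that $2-s=\tfrac32-\tfrac3p$). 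For the Lorentz-space part, the paper cites a fractional Lorentz-to-Lebesgue heat estimate (a variant of Yamazaki's Corollary 2.3) and integrates in time, which is exactly the route you record in your closing parenthesis; your main argument instead carries out an explicit analysis of the kernel $H_1=g_1*|\cdot|^{\beta-3}-|\cdot|^{\beta-3}$, extracting $L^r_{\mathrm{loc}}$ integrability from the Riesz-potential singularity and $O(|w|^{\beta-5})$ far-field decay from the vanishing first moment of the Gaussian, then scales and applies O'Neil. The paper's route is shorter because it outsources the kernel analysis to Yamazaki; yours is self-contained and makes transparent exactly where the condition $2+\epsilon<p$ (hence the ``$\sigma(p)-$'' at $p=3$) originates, namely the failure of $|w|^{\beta-3}$ to lie in $L^{r_0}_{\mathrm{loc}}$ at the endpoint $r_0=2p/(p+2)$. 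Both the exponent bookkeeping and the comparison with $\sigma(p)/2$ in your proposal check out against the inequality $\sigma(p)<\tfrac32-\tfrac3p$ established at the end of the proof of Proposition~\ref{prop.LpinftyDecaySpacteTime}.
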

  In the chains of estimates above, the first bound holds for all $t\in (0,\I)$ while the second, which replaces the algebraic power of $t$ with something smaller, only holds on $(0,T)$.
 See \cite{DL} for a reference on the fractional Laplacian and the heat equation.
	 \begin{proof}
	 	We first prove the result where $\Lambda^{\frac 3 2-\frac 3 p}u_0\in L^2$.
	 	Assume $u_0\in C_c^\I$.
	 	 Observe that 
	 	 \[
	 	 e^{(t-s)\Delta}\Delta u_0 = \Lambda^s e^{(t-s)\Delta} \Lambda^{-s}\Delta u_0.
	 	 \]
	 	 Note that since $u_0$ is independent of time, \[
	 	 \partial_t ( e^{t\Delta}u_0-u_0  )-\Delta ( e^{t\Delta}u_0-u_0) = \Delta u_0.
	 	 \]
	 	 The right-hand side is a function because $u_0\in C_c^\I$. 
	 	 By Duhamel's formula we have 
	 	 \[
	 	 \| e^{t\Delta}u_0 - u_0 \|_{L^2} \leq C\int_0^t \frac 1 {(t-s)^{s/2}} \| \Lambda^{-s}\Delta u_0\|_{L^2}\,ds \leq t^{1-s/2} \| \Lambda^{-s}\Delta u_0\|_{L^2}.
	 	 \]
	 	 We take $s= \frac 1 2 -\frac 3 p$ so that 
	 	 \[
	 	 2-s = \frac 3 2 -\frac 3 p.
	 	 \]
	 	 Then,
	 	 \[
	 	  \| e^{t\Delta}u_0 - u_0 \|_{L^2}  \leq  t^{\frac 3 4 -\frac 3 {2p}} \| \Lambda^{ \frac 3 2 -\frac 3 p} u_0\|_{L^2}.
	 	 \]
	 	 Note that  \[
	 	 \frac 3 4 -\frac 3 {2p} >\sigma/2	,
	 	 \]
	 	 for $2<p<3$. 
	 	 The above works for $u_0\in C_c^\I$. The full result follows by a density argument.

	 	 \bigskip We now address the case where
	 	 $\Lambda^{\frac 3 2-\frac 3 p}u_0\in L^{2,\I}$. 
	 	 By a fractional version of \cite[Corollary 2.3]{yamazaki}, we have 
	 	 \EQN{
	 	 	\| e^{t\Delta}u_0 - u_0\|_{L^{2+\epsilon}} &\leq C \int_0^t \frac {1} {(t-s)^{\frac s 2 + \frac 3 2 (  \frac 1 2 - \frac 1 {2+\epsilon} )}} \| \Lambda^{-s}\Delta u_0\|_{L^{2,\I}}\,ds
	 	 	\\&\leq C t^{1-\frac s 2-\frac 3 2 (  \frac 1 2 - \frac 1 {2+\epsilon} ) }\| \Lambda^{-s}\Delta u_0\|_{L^{2,\I}}.
	 	}
	 	 Choose $s$ as above. Note that for $p=3$,  we have $ \frac 3 4 -\frac 3 {2p} =\sigma/2$ while if $2<p<3$, 	 $\frac 3 4 -\frac 3 {2p} >\sigma/2$. In the latter case we can choose $\epsilon$ small to ensure the advertised bound holds but in the former case we must replace $t^{\sigma /2 }$ with $t^{\sigma-/2}$. Once this is done, the proof follows from a density argument.

	 \end{proof}

	 We begin with  a foundational lemma which establishes time regularity up to one derivative in the class of $L^{p,\I}$-weak solutions. 
	 
	 \begin{lemma}[Bounded time derivative]\label{lemma.firstorder} Let $B$ be an open ball and let $T>0$.
	 	Assume $u(x,t)$ is an $L^{p,\I}$-weak solution on $ (-\delta,\delta)\times \R^3$. If $u$ is bounded on $Q= B\times (0,T)$ and $B'$ is an open ball with $\overline {B'} \subset B$ and having the same center as $B$, then, for any $\de>0$,
	 	\[
	 	\partial_t u \in L^\I ( B'\times (\delta,T) ).
	 	\]
	 \end{lemma}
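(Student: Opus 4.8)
The plan is to use interior spatial regularity to reduce the assertion to the bound $\nb p\in L^\I(B'\times(\delta,T))$, and then to obtain this bound by splitting the pressure gradient at a fixed scale into a near-field piece, controlled by the spatial smoothness of $u$ and Calder\'on--Zygmund estimates, and a far-field piece, controlled by the decay $|\nb\Phi_{ij}(z)|\lesssim|z|^{-4}$ of the pressure kernel against the uniform-in-time $L^2_\uloc$ bound enjoyed by $L^{p,\I}$-weak solutions. Throughout, fix $\delta>0$ (without loss of generality $\delta<T$) and concentric balls $\overline{B'}\subset B''\Subset B'''\Subset B$.

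\textbf{Reduction to the pressure.} Since $u$ is bounded on $Q=B\times(0,T)$ and satisfies the local energy inequality \eqref{ineq:CKN-LEI}, it is a suitable weak solution, so the interior regularity theory of Serrin and Caffarelli--Kohn--Nirenberg \cite{Serrin,CKN} yields that $u$ is $C^\I$ in $x$ on $B'''\times(\delta/2,T)$ with $\|\nb_x^m u\|_{L^\I(B'''\times(\delta/2,T))}<\I$ for each $m\ge1$; in particular $\De u$ and $u\cdot\nb u$ are bounded there, and $u_iu_j(\cdot,t)$ lies in $C^{1,\al}(\overline{B'''})$ uniformly in $t\in(\delta/2,T)$ for any $\al\in(0,1)$. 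By \eqref{eq.ns}, $\pd_t u=\De u-u\cdot\nb u-\nb p$ in $\mathcal D'$, so $\pd_t u$ will be a bounded function on $B'\times(\delta,T)$ once we show $\nb p\in L^\I(B'\times(\delta,T))$.

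\textbf{Near/far split of $\nb p$.} As recalled in the introduction, $L^{p,\I}$-weak solutions are mild with spatially decaying velocity, so the pressure is given by the Calder\'on--Zygmund formula $p=-\tfrac13|u|^2+\Phi_{ij}\ast(u_iu_j)$, up to a function of $t$ that is immaterial for $\nb p$, where $|\Phi_{ij}(z)|\lesssim|z|^{-3}$ and $|\nb\Phi_{ij}(z)|\lesssim|z|^{-4}$; see \cite{LR}. Fix $\rho\in\big(0,\tfrac12\,\mathrm{dist}(\overline{B'},\R^3\setminus B'')\big)$. For $x\in B'$, $\nb p(x,t)$ is the sum of $-\tfrac13\nb(|u|^2)(x,t)$, the principal value $\int_{|x-y|<\rho}\nb\Phi_{ij}(x-y)\,u_iu_j(y,t)\,dy$, and $\int_{|x-y|\ge\rho}\nb\Phi_{ij}(x-y)\,u_iu_j(y,t)\,dy$. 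The first term is bounded on $B'\times(\delta,T)$ by the previous step. The second involves integration over $\{|x-y|<\rho\}\subset B''\subset B'''$, where $u_iu_j(\cdot,t)\in C^{1,\al}$ with norm bounded uniformly in $t\in(\delta/2,T)$, so the standard Schauder bound for this singular integral --- obtained by subtracting the first-order Taylor polynomial of $u_iu_j$ at $x$ and using the cancellation of $\nb\Phi_{ij}$ over spheres --- controls it uniformly in $t\in(\delta/2,T)$. For the third (far-field) term $|x-y|\ge\rho$ on the support, so a dyadic decomposition gives
\[
\Big|\int_{|x-y|\ge\rho}\nb\Phi_{ij}(x-y)\,u_iu_j(y,t)\,dy\Big|\lesssim\sum_{k\ge0}(2^k\rho)^{-4}\,\|u(\cdot,t)\|_{L^2(B_{2^{k+1}\rho}(x))}^2\lesssim\rho^{-1}\,\|u(\cdot,t)\|_{L^2_\uloc}^2 .
\]
Now $L^{p,\I}$-weak solutions are local energy solutions with $u\in L^\I(0,T;L^2_\uloc)$; equivalently, by Theorem \ref{thrm.aprioribound} together with $\|e^{t\De}u_0\|_{L^2_\uloc}\lesssim\|u_0\|_{L^{p,\I}}$ (valid for $p>2$, cf. \cite{BT8}), $\|u(\cdot,t)\|_{L^2_\uloc}\le\|e^{t\De}u_0\|_{L^2_\uloc}+\|\td u(t)\|_{L^2}\lesssim_{u_0,p}1$ on $(0,T)$. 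Hence the far-field term is also bounded on $B'\times(\delta,T)$, so $\nb p\in L^\I(B'\times(\delta,T))$ and therefore $\pd_t u\in L^\I(B'\times(\delta,T))$.

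\textbf{The main obstacle.} The delicate step --- and the reason the $L^{p,\I}$-weak solution structure is used at all, beyond boundedness of $u$ on $Q$ --- is the far-field term: the split scale $\rho$ must be small enough that the near-field singular integral stays inside the region $B'''$ of full interior spatial regularity, yet the far-field integral ranges over all of $\R^3$, in particular over the region where $u$ is only known to be $L^2_\uloc$. Making both ends meet hinges on the kernel bound $|\nb\Phi_{ij}(z)|\lesssim|z|^{-4}$ being summably integrable against an $L^2_\uloc$ density at distances bounded away from zero, together with the uniform-in-time $L^2_\uloc$ control on $u$. This is exactly the mechanism by which nonlocality of the pressure could disrupt interior time regularity; here it is benign because we work at times bounded away from any possible singular time and the relevant far-field norm of $u$ is uniformly bounded.
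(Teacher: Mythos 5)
Your proof is correct and follows essentially the same strategy as the paper's: reduce to bounding $\nb p$, split the pressure gradient into a near-field piece handled by interior smoothness of $u$ and cancellation of the kernel, and a far-field piece handled by global control coming from membership in the $L^{p,\I}$-weak solution class. The only real difference is in how the far-field tail is closed: the paper integrates by parts once more to get a $|x-y|^{-5}$ kernel and then pairs $|u|^2$ against it via the decomposition $u=e^{t\Delta}u_0+\td u$ together with Lorentz duality $L^{p,\I}\cdot L^{p',1}$ and the $L^2$ bound on $\td u$, whereas you keep the $|x-y|^{-4}$ kernel from $\nb\Phi_{ij}$ and bound the tail dyadically using the uniform $L^\I_tL^2_{\uloc}$ control available for local energy solutions (equivalently, $\|e^{t\Delta}u_0\|_{L^{2}_\uloc}+\|\td u\|_{L^2}\lesssim 1$). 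Both tail estimates are valid and of comparable effort; your $L^2_\uloc$ route avoids Lorentz duality at the cost of a dyadic summation, while the paper's route sidesteps dyadic sums by exploiting the slightly faster decay obtained from the extra integration by parts.
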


	 \begin{proof}
	 	Without loss of generality assume the balls are centered at the origin. 
	 	Since $\partial_t u  = -\nb p +\Delta u -u\cdot\nb u$ as distributions, it suffices to show that $-\nb p +\Delta u -u\cdot\nb u$ are all bounded. Boundedness for the local terms follows from, e.g, \cite{Serrin}. The pressure is the interesting term.
	 	Let $\phi $ be smooth, radial and, for $x\in B'$, satisfy $\supp \phi(x-\cdot) \subset B$ and $\phi=1$ in a neighborhood of $x$, the radius of which is proportional to the distance from $x$ to the boundary of $B$. Recall that
	 	\[
	 (	\nb p (x,t) )_k= \text{local part}+\partial_{k} \int K_{ij}(x,y) \partial_i(u_iu_j)(y)\,dy,
	 	\]
	 	where we are suppressing summation over the indices $1\leq i,j\leq 3$. The local part of the pressure, see \cite{Tsai-book}, is bounded by smoothness of $u$.
	 	We estimate the singular integral part in two cases by introducing the cut-off function $\phi$. Local terms generally look like the following
	 	\[
	 	\int K_{ij}(x,y) \phi(x-y) \partial_k \partial_i(u_iu_j)(y)  \,dy,
	 	\]
	 	while the non-local effect of the pressure is felt through the terms
	 	\[
	 	\int \partial_k[ K_{ij}(x,y) (1-  \phi(x-y)) ]  \partial_i(u_iu_j)(y)\,dy.
	 	\]
	 	The near-field integrals are all finite because $u$ is locally smooth---to prove this one uses the fact that $K_{ij}(x,y)\phi(x-y)$ is mean zero on spheres and then uses the mean value theorem to deplete the singularity of $K_{ij}$.
	 	 The far-field integrals are finite as a result to the bound,
	 	\EQN{
	 		&\int |\partial_i\partial_k[ K_{ij}(x,y) (1- \phi(x-y) )]| |(u_iu_j)|(y)  \,dy \lesssim \int \frac {\chi_{\supp [(1-\phi)(x-y)]}} {|x-y|^5} |u|^2(y)\,dy.
	 	}
	 	As the singularity is avoided in the preceding integral and $u$ has globally finite quantities in virtue of being an $L^{p,\I}$-weak solution, this term is bounded.  To elaborate, we have 
	 	\EQN{
	 		& \int \frac {\chi_{\supp [(1-\phi)(x-y)]}} {|x-y|^5} |u|^2(y)\,dy
	 		\\&\lesssim \| e^{t\Delta}u_0\|_{L^{p,\I}} \| (1+|\cdot|)^{-5}\|_{L^{p',1}} +  \| u-e^{t\Delta}u_0 \|_{L^2} \| (1+|\cdot|)^{-5}\|_{L^{\I}}
	 	}
	 	where $p'$ is the H\"older conjugate for $p$.
	 \end{proof}

	 We now  show $\partial_t u$ is itself in a time-H\"older class. We begin with local terms as their treatment is straightforward.

	 \begin{lemma}[Time regularity of local terms] \label{lemma.localterms}	Assume $u(x,t)$ is an $L^{p,\I}$-weak solution on $ (-\delta,\delta)\times \R^3$.
	 	Suppose that $\partial_t u \in L^\I(B(x_0,|x_0|/2)\times (-\delta,\delta))$ and $u$ satisfies 
	 	\[
	 	|u(x,t)|\leq \frac M {(|x|+\sqrt t)^{3/p}}.
	 	\]
	 	Fix $-\delta/4< t_1<t_2<\delta/4$ and $x\neq 0$. Then,
	 	\[
	 	|\Delta u(x,t_2)- \Delta u(x,t_1)| + 	|u\cdot\nb u(x,t_2)- u\cdot\nb u(x,t_1)|\lesssim_{|x|,M} t_2-t_1.
	 	\]
	 	In particular, $-\Delta  u +u\cdot\nabla u  \in C_{t}^{0,1}( -\de/4,\de/4)$ for all $x\neq 0$. The same estimates hold for other partial spatial derivatives of the components of $u$ but with adjusted constants in the upper bound.
	 \end{lemma}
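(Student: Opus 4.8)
The plan is to reduce the claim to pointwise-in-$x$ bounds on the spatial derivatives of $\partial_t u$ on a small cylinder around the fixed point $x\neq 0$, and then conclude by the fundamental theorem of calculus in time together with the product rule, exactly in the spirit of the proof of Lemma \ref{lemma.firstorder}. First I would observe that the pointwise bound $|u(y,s)|\le M(|y|+\sqrt{|s|})^{-3/p}$ makes $u$ bounded on the cylinder $Q_x:=B(x,|x|/4)\times(-\delta/2,\delta/2)$ by a constant depending only on $|x|$ and $M$, since $|y|\ge 3|x|/4$ there. By the interior regularity theory for \eqref{eq.ns} going back to Serrin \cite{Serrin} (boundedness of $u$ on a parabolic cylinder forces $C^\infty$ smoothness in the space variables on the interior, with standard interior estimates), $u(\cdot,t)$ is smooth on $B(x,|x|/8)$ for every $t\in(-\delta/4,\delta/4)$ with $\|\nabla_x^k u(\cdot,t)\|_{L^\infty(B(x,|x|/8))}\lesssim_{k,|x|,M}1$ uniformly in $t$, the relevant past-time slice staying inside $(-\delta/2,\delta/2)$. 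We are also given $\partial_t u\in L^\infty(B(x,|x|/2)\times(-\delta,\delta))$.

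The central step is to bootstrap this to $\partial_x^\alpha\partial_t u\in L^\infty(B(x,\rho)\times(-\delta/4,\delta/4))$ with norm $\lesssim_{|x|,M}1$, for a small $\rho>0$ and every multi-index $\alpha$ with $|\alpha|\le 2$ (indeed any order). For this I would differentiate \eqref{eq.ns} in $x$: $\partial_x^\alpha u$ solves the heat equation with forcing $-\partial_x^\alpha(u\cdot\nabla u)-\nabla\partial_x^\alpha p$, so that pointwise $\partial_t\partial_x^\alpha u=\Delta\partial_x^\alpha u-\partial_x^\alpha(u\cdot\nabla u)-\nabla\partial_x^\alpha p$. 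The first two terms are bounded on $Q_x$ by the spatial smoothness just established. For $\nabla\partial_x^\alpha p$ I would use the singular-integral (Oseen/Calder\'on--Zygmund) representation of the pressure gradient in terms of $u\otimes u$ and split it, via a cut-off as in Lemma \ref{lemma.firstorder}, into a near-field piece supported where $u$ is smooth---so the kernel's diagonal singularity is killed by its mean-zero property and the mean value theorem---and a far-field piece, where the relevant derivatives of the kernel are smooth and decay like $|x-y|^{-(5+|\alpha|)}$, integrated against $|u|^2$ which is controlled by $u\in L^{p,\infty}+L^2$ through O'Neil's inequality \eqref{eqn:Young-O'Neil-type}. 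Hence $\partial_x^\alpha\partial_t u\in L^\infty_{\mathrm{loc}}$ away from the origin with the stated bounds, after the routine check that $\partial_t$ and $\partial_x^\alpha$ may be interchanged in this regularity class.

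Granting this, the conclusion is immediate. For the diffusion term, $\Delta u(x,t_2)-\Delta u(x,t_1)=\int_{t_1}^{t_2}\Delta\partial_t u(x,s)\,ds$, so $|\Delta u(x,t_2)-\Delta u(x,t_1)|\le|t_2-t_1|\,\|\Delta\partial_t u\|_{L^\infty(B(x,\rho)\times(-\delta/4,\delta/4))}\lesssim_{|x|,M}t_2-t_1$. For the convective term I would write $(u\cdot\nabla u)(x,t_2)-(u\cdot\nabla u)(x,t_1)=(u(x,t_2)-u(x,t_1))\cdot\nabla u(x,t_2)+u(x,t_1)\cdot(\nabla u(x,t_2)-\nabla u(x,t_1))$, bound $|u(x,t_2)-u(x,t_1)|\le\|\partial_t u\|_\infty|t_2-t_1|$ and $|\nabla u(x,t_2)-\nabla u(x,t_1)|\le\|\nabla\partial_t u\|_\infty|t_2-t_1|$, and use that $|u(x,t_1)|$ and $|\nabla u(x,t_2)|$ are $\lesssim_{|x|,M}1$. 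The same scheme, now using $\partial_x^\alpha\partial_t u\in L^\infty_{\mathrm{loc}}$ for $|\alpha|\le 2$, yields the analogous Lipschitz-in-$t$ bound for any first spatial derivative of the components of $u$, with constants adjusted for $|x|$ and $M$.

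The main obstacle is the pressure-gradient bound inside the bootstrap: one must verify that $\nabla\partial_x^\alpha p$ stays bounded on a fixed small ball around $x$ despite the singularity of $u$ at the origin. As indicated, this is handled by the near-/far-field decomposition of the pressure's singular-integral representation exactly as in Lemma \ref{lemma.firstorder}; the only new point is that higher derivatives of the kernel are more singular near the diagonal but decay faster at infinity, which is harmless because the near-field integration takes place precisely in the region where $u$ is smooth. A secondary, purely technical point is the justification of the interior parabolic estimates uniformly for $t$ in the open interval $(-\delta/4,\delta/4)$ and of the interchange of time and space derivatives, both of which are standard.
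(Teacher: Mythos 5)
Your proposal is correct and follows essentially the same approach as the paper: both identify $\partial_t\Delta u=\Delta\partial_t u=\Delta(\Delta u-u\cdot\nabla u-\nabla p)$, invoke Serrin's interior spatial regularity away from the singularity, control the nonlocal pressure contribution by the same near-/far-field decomposition as in Lemma~\ref{lemma.firstorder} (with $\nabla$ replaced by higher derivatives), and then conclude by the fundamental theorem of calculus in time together with the product rule for the convective term. The paper's own proof is terser but relies on the identical sequence of ideas.
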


	 \begin{proof}
	 	Observe that, away from the singularity, we have higher space regularity \cite{Serrin}. In particular, 
	 	\[
	 	\Delta ( \Delta u	-u\cdot  \nb u-\nb p),
	 	\]
	 	is locally bounded and continuous.   
	 	But, then, 
	 	\[\partial_t \Delta u = \Delta \partial_t u = \Delta ( \Delta u	-u\cdot\nb  u-\nb p).\]
	 	Note that $\Delta ( \Delta u	-u\cdot\nb  u)$ is clearly finite due to \cite{Serrin}. For $\Delta \nb p$, we can repeat the estimate on $\nabla p$ from the proof of Lemma \ref{lemma.firstorder} but with $\nabla$ replaced by $\Delta \nabla$ to see that $\Delta \nb p$ is finite.\footnote{It is worth pointing out that if we wanted to bound $\partial_t^2 u$ then we would need to contend with $ \partial_t (\Delta u -u\cdot\nabla u-\nabla p)$. This is not necessarily finite because the time derivative would be passed onto the non-local part of the pressure. This is why first order time regularity is easy to obtain while higher order time regularity is not. } 
	 	
	 	This all implies  $\Delta u (x,\cdot )\in C^1_t((-\delta/2,\delta/2))$ for every $x\neq0 $.  The promised (weaker) H\"older estimate follows immediately. The same argument applies to $u\cdot\nb u(x,\cdot )$ for every $x\neq 0 $ and, indeed, for any $D^\alpha u (x,\cdot)$ where $\alpha$ is a multi-index. In the latter case we have that constant in the preceding inequality depends on $|\alpha|$. Since we only ever consider $|\alpha|\leq 2$, we can take this to be universal.
	 \end{proof}

	 We next investigate the time regularity of the pressure gradient.
	 
	 \begin{lemma}[Time regularity of the pressure gradient] 	\label{lemma.pressureHolder}Fix $p\in (2,3]$. Assume $u(x,t)$ is an $L^{p,\I}$-weak solution on $ (-\delta,\delta)\times \R^3$.
	 	Suppose that $\partial_t u \in L^\I(B(x,|x|/2)\times (-\delta,\delta))$ and $u$ satisfies the assumptions of Theorem \ref{thrm.time-reg}.
	  Fix $-\delta/4< t_1<t_2<\delta/4$ and $x\neq 0$. Then,
	 	\[
	 	|\nb p(x,t_2)- \nb p(x,t_1)|\lesssim_{|x|,M}  \begin{cases}
	 		 (t_2-t_1)^{\sigma/2}&2<p<3
	 		 \\(t_2-t_1)^{\sigma-/2}&p=3
	 	\end{cases}.
	 	\] 
	 \end{lemma}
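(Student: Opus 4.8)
The plan is to split $\nb p$ into a near-field and a far-field part relative to the fixed point $x\neq 0$ and bound the time increment of each. Fix a radial $\phi\in C_c^\infty(\R^3)$ with $\phi\equiv 1$ near $0$ and $\supp\phi\subset B(0,|x|/2)$, and (using the singular-integral representation of the pressure from Lemma~\ref{lemma.firstorder}) write $(\nb p)_k=(\nb p_{\near})_k+(\nb p_{\far})_k$, where $\nb p_{\near}$ collects the local part of the pressure together with the piece of the convolution carrying the cutoff $\phi(x-\cdot)$, while, after one integration by parts, $\nb p_{\far}(x,t)=\int G(x,y)(u\otimes u)(y,t)\,dy$ with $|G(x,y)|\lesssim\chi_{\{|x-y|\ge|x|/2\}}|x-y|^{-5}$.

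\emph{Near field.} On $\supp\phi(x-\cdot)\subset B(x,|x|/2)$ both $u$ and $\partial_t u$ are smooth and bounded, by the standing hypothesis $\partial_t u\in L^\infty(B(x,|x|/2)\times(-\de,\de))$ and interior regularity away from the singularity; differentiating under the integral and exploiting that the kernel $K_{ij}(x,y)\phi(x-y)$ is mean zero on spheres, exactly as in the proofs of Lemmas~\ref{lemma.firstorder} and \ref{lemma.localterms}, one bounds $\partial_t\nb p_{\near}(x,\cdot)$ near $x$ (the contribution of $\nb^3p$ to $\nb\partial_t u$ being handled by far-field kernel decay against the global $L^2$ and $L^{p,\I}$ norms of $u$). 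Hence $|\nb p_{\near}(x,t_2)-\nb p_{\near}(x,t_1)|\lesssim_{|x|,M}t_2-t_1$, which is $\lesssim_{|x|,M}(t_2-t_1)^{\sigma(p)/2}$ (resp. $(t_2-t_1)^{(\sigma(p)-)/2}$) since $t_2-t_1\le\de/2$ and $\sigma(p)/2\le\tfrac14<1$.

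\emph{Far field.} Here $\nb p_{\far}(x,t_2)-\nb p_{\far}(x,t_1)=\int G(x,y)\big[u(t_2)\otimes(u(t_2)-u(t_1))+(u(t_2)-u(t_1))\otimes u(t_1)\big](y)\,dy$. By \eqref{ineq.assumption}, $\|u(\cdot,t)\|_{L^{p,\I}}\lesssim M$ for all $t$, and $G(x,\cdot)\in L^{a,1}$ with $a=\tfrac{2p}{p-2}$ and $\|G(x,\cdot)\|_{L^{a,1}}\lesssim_{|x|}1$ (when $p=3$ one uses $\|\cdot\|_{L^{2+\e}}$ for the increment and adjusts $a$), so by H\"older's inequality in Lorentz spaces it suffices to prove the \emph{increment estimate}
\[
\|u(\cdot,t_2)-u(\cdot,t_1)\|_{L^2}\lesssim_{M,p}(t_2-t_1)^{\sigma(p)/2}\qquad(2<p<3),
\]
and its analogue in $L^{2+\e}$ at rate $(\sigma(3)-)/2$ when $p=3$. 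I would establish this by \emph{restarting the solution at time $t_1$}, treating only $0\le t_1<t_2$ and, symmetrically, $t_1<t_2\le0$: the pointwise bound gives $u(\cdot,t_1)\in L^{p,\I}$ with $\|u(\cdot,t_1)\|_{L^{p,\I}}\lesssim M$, and since $u$ is mild and the relevant space-time integrals of $\|u\|_{L^4}^2$ converge over $[t_1,t_2]$ when $0\notin(t_1,t_2)$, the field $u-e^{(t-t_1)\De}u(\cdot,t_1)$ lies in $L^\infty_tL^2_x\cap L^2_t\dot H^1_x$ on $[t_1,t_2]$; thus $u|_{[t_1,t_2]}$ is an $L^{p,\I}$-weak solution with data $u(\cdot,t_1)$ (Definition~\ref{def.weak}). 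Theorem~\ref{thrm.aprioribound} then gives $\|u(\cdot,t_2)-e^{(t_2-t_1)\De}u(\cdot,t_1)\|_{L^2}^2\lesssim_{M,p}(t_2-t_1)^{\sigma}+(t_2-t_1)^{1/2}\lesssim_{M,p}(t_2-t_1)^{\sigma}$, and Lemma~\ref{lemma.heat.init} applied to $u(\cdot,t_1)$ — whose $\La^{3/2-3/p}$ derivative is controlled uniformly in $t_1\in(-\de,\de)$ by the standing assumption — gives $\|e^{(t_2-t_1)\De}u(\cdot,t_1)-u(\cdot,t_1)\|\lesssim_M(t_2-t_1)^{\sigma/2}$ (resp. $(t_2-t_1)^{(\sigma-)/2}$); summing proves the increment estimate. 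The resulting bound makes $\nb p(x,\cdot)$ uniformly continuous on each of $(0,\de/4)$ and $(-\de/4,0)$, and the weak continuity in Definition~\ref{def.weak} together with the standing bounds force $u(\cdot,0^-)=u(\cdot,0)=u(\cdot,0^+)$, whence $\nb p(x,0^-)=\nb p(x,0^+)$ by the Riesz representation; so the estimate extends across $t=0$ by the triangle inequality (using $|t_1|,t_2\le t_2-t_1$ when $t_1\le0\le t_2$). Combining the near- and far-field bounds gives the lemma.

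\emph{Main obstacle.} The delicate point is the uniform-in-$(t_1,t_2)$ modulus-of-continuity estimate for $u$: writing $u(t_2)-u(t_1)=(u(t_2)-u(\cdot,0))-(u(t_1)-u(\cdot,0))$ only yields the useless bound $t_2^{\sigma/2}+t_1^{\sigma/2}$, and differentiating $\nb p_{\far}$ in time fails because the $e^{t\De}u_0\otimes e^{t\De}u_0$ piece of $\partial_t\nb p_{\far}$ is of size $t^{-1}$, too singular to integrate to a H\"older modulus. Restarting at $t_1$ is exactly what converts the \textit{a priori} decay of Theorem~\ref{thrm.aprioribound} into a genuine modulus of continuity; the technical care lies in verifying Definition~\ref{def.weak} directly for the restarted solution, in particular that $u(\cdot,t_1)\in L^{p,\I}\setminus L^2$ while the nonlinear part $u-e^{(t-t_1)\De}u(\cdot,t_1)$ belongs to the energy space.
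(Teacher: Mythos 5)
Your proposal is correct and uses essentially the same strategy as the paper: split $\nabla p$ into near- and far-field, handle the near field by local smoothness (Lemmas~\ref{lemma.firstorder}, \ref{lemma.localterms}), and for the far field reduce to the increment estimate $\|u(t_2)-u(t_1)\|_{L^2}\lesssim(t_2-t_1)^{\sigma/2}$ obtained by restarting the solution and combining Theorem~\ref{thrm.aprioribound} with the heat-semigroup increment bound of Lemma~\ref{lemma.heat.init}. The paper restarts at $t_0=t_1-(t_2-t_1)$ and splits the increment into three pieces (two applications of the \textit{a priori} bound plus one heat increment), whereas you restart at $t_1$ and use two pieces --- a slightly cleaner but equivalent decomposition --- and you add an explicit patching argument at $t=0$ that the paper leaves implicit in its standing assumption that $u$ is an $L^{p,\infty}$-weak solution from any starting time in $(-\delta,\delta)$.
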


	 Note that, here, the regularity is $C_{t}^{0,\sigma/2}$ or $C_{t}^{0,\sigma-/2}$ whereas for the local terms it was $ C^{0,1}_t$---this bears witness to the fact that the pressure is the limiting term insofar as time regularity is concerned.  
	 
	 \begin{proof} Fix $x\neq 0$. Recall that 
	 	\[
	 	p(x,t) =  -\frac 1 3 |u|^2(x) + p.v. \int K_{ij}(x,y) u_i(y,t)u_j (y,t)\,dy, 
	 	\]
	 	where $K$ is the matrix kernel of a Calder\'on-Zygmund operator with entries $(K_{ij})$---see, e.g., \cite{Tsai-book}---and we are suppressing summation over $1\leq i,j\leq 3$. Hence,
	 	\[
	 	\partial_{x_k} p(x,t)=-\frac 2 3 u_i\partial_{x_k}u_i + p.v. \int K_{ij}(x,y) \partial_{y_k}(u_i(y,t)u_j (y,t))\,dy.
	 	\]
	 	 Lemma \ref{lemma.localterms} applies to $u_i\partial_{x_k}u_i $ indicating it has the desired regularity. We will therefore not consider this term again in this proof.
	 	We re-write the singular integral in terms of near- and far-field parts by introducing a cut-off function $\phi$ so that $\phi=1$ on $B(0,1/4)$, $\phi \in C_c^\I(B(0,1/2))$. We require $\phi$ to also be radial. Let 
	 	\[
	 	I_{\text{near}}=p.v. \int \phi((x-y)/|x|) K_{ij}(x,y) \partial_{y_k}(u_i(y,t)u_j (y,t))\,dy,
	 	\]
	 	and 
	 	\[
	 	I_{\text{far}}=\int   \big(1-\phi((x-y)/|x|) \big) K_{ij}(x,y) \partial_{y_k}(u_i(y,t)u_j (y,t))\,dy.
	 	\]
	 	Then 
	 	\[p.v. \int K_{ij}(x,y) \partial_{y_k}(u_i(y,t)u_j (y,t))\,dy = I_{\text{near}}+I_{\text{far}}.\]
	 	We re-write $I_{\text{far}}$ as the sum of
	 	\[
	 	I_{\text{far}}^i  = \int   \partial_{y_k} \big[\big(1-\phi((x-y)/|x|) \big) \big] K_{ij}(x,y)u_i(y,t)u_j (y,t)\,dy,
	 	\]
	 	and
	 	\[
	 	I_{\text{far}}^{ii} = \int    \big[\big(1-\phi((x-y)/|x|) \big) \partial_{y_k}K_{ij}(x,y)u_i(y,t)u_j (y,t)\,dy.
	 	\]
	 	The first of these is effectively a local term because $u_i$ and $u_j$ are only evaluated at points $y$ near to, but not too near to, $x$. The latter is the only genuinely far-field term. To bound it, we will use properties of $L^{p,\I}$-weak solutions.
	 	
	 	\medskip \noindent
	 	\underline{Far-field estimates:} 
	 	Let us consider 
	 	\EQN{
	 		&	I_{\text{far}}^{ii}(x,t_2)-		I_{\text{far}}^{ii}(x,t_1) 
	 		\\&= \int    \big[\big(1-\phi((x-y)/|x|) \big) \partial_{y_k}K_{ij}(x,y)  \big[ u_i(y,t_2)u_j (y,t_2) -  u_i(y,t_1)u_j (y,t_1)\big]\,dy
	 		\\&= \int    \big[\big(1-\phi((x-y)/|x|) \big) \partial_{y_k}K_{ij}(x,y)  \big( u_i(y,t_2)- u_i(y,t_1)\big)u_j (y,t_2)  \,dy
	 		\\&\quad + \int    \big[\big(1-\phi((x-y)/|x|) \big) \partial_{y_k}K_{ij}(x,y)  \big( u_j(y,t_2)- u_j(y,t_1)\big)u_i(y,t_1)  \,dy.
	 	}
	 	Let $\tau=t_2-t_1$. Note that $t_0:=t_1-\tau\in (-\de,\de)$. 
	 	Our strategy is to add and subtract $e^{\tau\Delta}u(t_0)$ and $e^{2\tau\Delta}u(t_0)$ so that we can use the \textit{a priori} bound from Theorem \ref{thrm.aprioribound}.
	 In particular, we have
	 	\EQN{
	 		&\int    \big[\big(1-\phi((x-y)/|x|) \big) \partial_{y_k}K_{ij}(x,y)  \big( u_i(y,t_2)- u_i(y,t_1)\big)u_j (y,t_2)  \,dy 
	 		\\&= \int \big[\big(1-\phi((x-y)/|x|) \big) \partial_{y_k}K_{ij}(x,y)  \big( u_i(y,t_2)- e^{2\tau\Delta} u_i(y,t_0)\big)u_j (y,t_2)  \,dy
	 		\\&\quad + \int    \big[\big(1-\phi((x-y)/|x|) \big) \partial_{y_k}K_{ij}(x,y)  	\big( e^{2\tau\Delta} u_i(y,t_0)	-e^{\tau\Delta} u_i(y,t_0)\big)u_j (y,t_2)  \,dy 
	 		\\&\quad + \int    \big[\big(1-\phi((x-y)/|x|) \big) \partial_{y_k}K_{ij}(x,y)  	\big(e^{\tau\Delta} u_i(y,t_0)	-	u_i(y,t_1)\big)u_j (y,t_2) \big)u_j (y,t_2)  \,dy  \,dy.
	 	}
	 	For $t>t_0$, we can view $u$ as an $L^{p,\I}$-weak solution with data $u(\cdot,t_0)$. Hence, by Theorem \ref{thrm.aprioribound},
	 	\[
	 	\| u_i(y,t_2)- e^{2\tau\Delta} u_i(y,t_0) \|_{L^2}\lesssim_{M} (2\tau)^{\sigma/2} = (2(t_2-t_1))^{\sigma/2},
	 	\]
	 	with a similar bound holding for $ \|e^{\tau\Delta} u_i(y,t_0)	-	u_i(y,t_1)\|_{L^2} $ but with $2\tau$ replaced by $\tau$.  We therefore have 
	 	\EQN{
	 		& \int \big[\big(1-\phi((x-y)/|x|) \big) \partial_{y_k}K_{ij}(x,y)  \big( u_i(y,t_2)- e^{2\tau\Delta} u_i(y,t_0)\big)u_j (y,t_2)  \,dy 
	 		\\& \lesssim_M (t_2-t_1)^{\sigma /2}\|	 1-\phi((x-y)/|x|) \big) \partial_{y_k}K_{ij}(x,y)  	\|_{L_y^{r,2}(\R^3)} \| u\|_{L^\I(-\delta,\delta;L^{p,\I}(\R^3))},
	 	}
	 	where 
	 	\[
	 	1 = \frac 1 2 +\frac 1 p +\frac 1 r.
	 	\]
	 	Note that $p>2$ so   $r$ can be suitably chosen.   We have due to the support of $1-\phi$ that
	 	\[
	 	( 1-\phi((x-y)/|x|) ) \partial_{y_k}K(x,y) \lesssim_{|x|} \frac 1 {(1+|y|)^4} \in L^\I \cap L^1 \subset L^{r,2}(\R^3),
	 	\]
	 	and thus obtain 
	 	\EQN{
	 		& \int \big[\big(1-\phi((x-y)/|x|) \big) \partial_{y_k}K_{ij}(x,y)  \big( u_i(y,t_2)- e^{2\tau\Delta} u_i(y,t_0)\big)u_j (y,t_2)  \,dy 
	 		\lesssim_{M,|x|} (t_2-t_1)^{\sigma/2}.
	 	}
	 	An identical bound holds for the term involving   $e^{\tau\Delta} u_i(y,t_0)	-	u_i(y,t_1)$. 
	 	
	 	The remaining term in $	I_{\text{far}}^{ii}(x,t_2)-		I_{\text{far}}^{ii}(x,t_1) $ is 
	 	\EQN{
	 		&\int    \big[\big(1-\phi((x-y)/|x|) \big) \partial_{y_k}K_{ij}(x,y)  	\big( e^{2\tau\Delta} u_i(y,t_0)	-e^{\tau\Delta} u_i(y,t_0)\big)u_j (y,t_2)  \,dy .
	 	}
	 	If $2<p<3$, then, using Lemma \ref{lemma.heat.init},  we have 
	 	\[
	 	\| e^{2\tau\Delta} u_i(y,t_0)	-e^{\tau\Delta} u_i(y,t_0)\|_{L^2}\lesssim (t_2-t_1)^{\sigma/2} M.
	 	\]
	 	So,
	 	\EQN{
	 		&\int    \big[\big(1-\phi((x-y)/|x|) \big) \partial_{y_k}K_{ij}(x,y)  	\big( e^{2\tau\Delta} u_i(y,t_0)	-e^{\tau\Delta} u_i(y,t_0)\big)u_j (y,t_2)  \,dy 
	 		\\&\leq C (t_2-t_1)^{\sigma/2} M^2.
	 	}
	 	 If $p=3$, then using the alternative case in Lemma \ref{lemma.heat.init},  we have 
	 	 	\[
	 	 \| e^{2\tau\Delta} u_i(y,t_0)	-e^{\tau\Delta} u_i(y,t_0)\|_{L^2}\lesssim (t_2-t_1)^{\sigma-/2} M,
	 	 \]
	 	 which leads to 
	 	 \EQN{
	 	 	&\int    \big[\big(1-\phi((x-y)/|x|) \big) \partial_{y_k}K_{ij}(x,y)  	\big( e^{2\tau\Delta} u_i(y,t_0)	-e^{\tau\Delta} u_i(y,t_0)\big)u_j (y,t_2)  \,dy 
	 	 	\\&\leq C (t_2-t_1)^{\sigma-/2} M^2.
	 	 }
	 	 This concludes our estimate on $|I_{\text{far}}^{ii}(x,t_2)-		I_{\text{far}}^{ii}(x,t_1)|$.

	 	We now bound $	I_{\text{far}}^{i}(t_2)-	I_{\text{far}}^{i}(t_1)$,
	 	which amounts to bounding the following integral
	 	\EQN{
	 		\int   \partial_{y_k} \big[\big(1-\phi((x-y)/|x|) \big) \big] K_{ij}(x,y)\big[ u_i(y,t_2)u_j (y,t_2)-u_i(y,t_1)u_j (y,t_1)   \big]\,dy.   
	 	}
	 	Observe that $ \partial_{y_k} \big[\big(1-\phi((x-y)/|x|) \big) \big]$ is supported away from any singularities in either $K_{ij}$ or $u$ and in a region where Lemma \ref{lemma.localterms} applies. Furthermore, the difference of products can be expanded using the standard trick to obtain terms like $u_i(y,t_2)(  u_j(y,t_2)-u_j(y,t_1))$. The first factor is bounded using the assumption \eqref{ineq.assumption} while the second is bounded using  Lemma \ref{lemma.firstorder} and the mean value theorem in the time variable.  Using these observations we obtain
	 	\EQN{
	 		|	I_{\text{far}}^{i}(t_2)-	I_{\text{far}}^{i}(t_1)  |\lesssim_{M,x } (t_2-t_1)\leq  (t_2-t_1)^{\sigma/2},
	 	}
	 	since $\sigma/2<1$.
	 	
	 	\medskip\noindent\underline{Near-field estimates}:
	 	We still need to bound $	I_{\text{near}}$. This is complicated by the critical order singularity in $K$ which means we must use  the mean value theorem in both  the time and space variables. We need to estimate the following,
	 	\begin{align*}
	 	p.v. \int \phi((x-y)/|x|) K_{ij}(x,y) \bigg(\partial_{y_k}(u_i(y,t_2)u_j (y,t_2))-\partial_{y_k}(u_i(y,t_1)u_j (y,t_1))  \bigg)\,dy.
	 	\end{align*}
	 	This breaks into a number of symmetric cases after we use the product rule. We only consider one such case, namely
	 	\begin{align*}
	 		p.v. \int \phi((x-y)/|x|) K_{ij}(x,y) \bigg(u_i(y,t_2)\partial_{y_k}u_j (y,t_2)- u_i(y,t_1)\partial_{y_k}u_j (y,t_1)  \bigg)\,dy.
	 	\end{align*}
	  Applying a standard trick, this reduces to more terms which are treated the same, including, e.g., the term
	\begin{align*}
		p.v. \int \phi((x-y)/|x|) K_{ij}(x,y) \big(u_i(y,t_2)-u_i(y,t_1)\big)\partial_{y_k}u_j (y,t_2) \,dy.
	\end{align*}
	  To be clear, there are more terms  than this but they are symmetric so we omit them. The kernel vanishes on spheres centered at $x$ and, therefore, 
	  	\begin{align*}
	  	&p.v. \int \phi((x-y)/|x|) K_{ij}(x,y) \big(u_i(y,t_2)-u_i(y,t_1)\big)\partial_{y_k}u_j (y,t_2) \,dy
	  	\\&= 		p.v. \int \phi((x-y)/|x|) K_{ij}(x,y)
	  	\\&\qquad\cdot \bigg( \big(\underbrace{u_i(y,t_2)-u_i(y,t_1)}_{=: v(y)}\big)\partial_{y_k}u_j (y,t_2) - \big(\underbrace{u_i(x,t_2)-u_i(x,t_1)}_{=:v(x)}\big)\partial_{x_k}u_j (x,t_2)  \bigg) \,dy.
	  \end{align*}
	 	 We have 
	 	\[
	 	v(y)\partial_{y_k}u_j (y,t_2)- v(x)\partial_{x_k}u_j (x,t_2)= 	v(y)\big(\partial_{y_k}u_j (y,t_2)-\partial_{x_k}u_j (x,t_2) \big)+ (v(y)- v(x))\partial_{x_k}u_j (x,t_2).
	 	\]
	 	In the support of $\phi((x-\cdot)/|x| )$ we have that  $|v(y)|\lesssim t_2-t_1$ by Lemma \ref{lemma.localterms} and  $u$ is smooth. Hence
	 	\[
	 	|v(y)\cdot (\partial_{y_k}u_j (y,t_2)-\partial_{x_k}u_j (x,t_2)) |\lesssim (t_2-t_1)|x-y|,
	 	\]
	 	where we used the mean value theorem in the $x$ variable. 
	 	For the other term, applying the mean value theorem in the space variable to $v(y)-v(x)$ gives
	 	\[
	 	|(v(y)-v(x))\partial_{x_k}u_j (x,t_2)|\lesssim |\partial_{x_k}u_j (x,t_2)| |x-y| \sup_{z\in [x,y]} |\nabla v(z)|,
	 	\]
	 	where $z\in [x,y]$ just means that $z$ is on the line segment connecting $x$ to $y$. Because this is still in the region of regularity and $\nabla v(z) = \nabla u_i (z,t_2)- \nabla u_i (z,t_1)$, by Lemma \ref{lemma.localterms},  we have $|\nabla v(z) |\lesssim t_2-t_1$ and, therefore,
	 	\[
	 	|(v(y)-v(x))\cdot\nb u(x)|\lesssim |\nb u(x)| |x-y|(t_2-t_1)
	 	\]
	 	Because we are able to extract both powers of $|x-y|$ and $(t_2-t_1)$, we are able to deplete the singularity in the kernel, subsequently obtaining the bound,
	 	\[
	 	| I_{\text{near}}(x,t_2)-	I_{\text{near}}(x,t_1)|\lesssim_{x,M} (t_2-t_1) \lesssim_\delta (t_2-t_1)^{\sigma/2}.
	 	\]

	 \end{proof}

	 We can now complete the proof of our  time-regularity result by combining the preceding two lemmas.
	 
	 \begin{proof}[Proof of Theorem \ref{thrm.time-reg}]
	 	The proof of Theorem \ref{thrm.time-reg} follows directly from Lemmas \ref{lemma.localterms} and \ref{lemma.pressureHolder}.
	 \end{proof}
	 
		 \section*{Declarations}

	\textbf{Funding:} The research of Z.~Bradshaw was supported in part by the  NSF via grant DMS-2307097.
	
\noindent 	\textbf{Interests:} The authors have no competing interests to declare that are relevant to the content of this article.


\begin{thebibliography}{AAAA}
	 	\bibitem{AB} Albritton, D.~and Barker, T., Global Weak Besov Solutions of the Navier-Stokes Equations and Applications. Arch.~Rational Mech.~Anal.~232, 197–263 (2019). 
	 	\bibitem{ABP} Albritton, D., Barker, T.~and Prange, C., Localized smoothing and concentration for the Navier-Stokes equations in the half space.
	 	J. Funct. Anal. 284 (2023), no. 1, Paper No. 109729, 42 pp.
	 		\bibitem{AlBrCo} Albritton, D., Brué, E.,~and Colombo, M., 
	 	Non-uniqueness of Leray solutions of the forced Navier-Stokes equations. Ann.~of Math.~(2) 196 (2022), no.~1, 415–455.
	 	\bibitem{BaeBrandolese} Bae,~H.~O.~and Brandolese,~L., On the effect of external forces on incompressible fluid motions at large distances. Ann.~Univ.~Ferrara (2009) 55:225–238.
	 	\bibitem{BCD} Bahouri, H., Chemin, J.-Y., and Danchin, R., \textit{Fourier analysis and nonlinear partial differential equations}. Grundlehren der mathematischen Wissenschaften [Fundamental Principles of Mathematical Sciences], 343. Springer, Heidelberg, 2011. xvi+523 pp.
	 	\bibitem{BP2020} Barker, T.~and Prange, C., Localized Smoothing for the Navier-Stokes Equations and Concentration of Critical Norms Near Singularities. Arch.~Rational Mech.~Anal.~236 (2020), no. 3, 1487-1541.
	 	\bibitem{Barker2}Barker, T., About local continuity with respect to  L2  initial data for energy solutions of the Navier-Stokes equations.
	 Math. Ann. 381 (2021), no. 3-4, 1373–1415.
	 \bibitem{Barker} Barker, T., Existence and Weak* Stability for the Navier-Stokes System with Initial Values in Critical Besov Spaces, arXiv preprint, \url{https://arxiv.org/abs/1703.06841}.
	 	\bibitem{BaSeSv} 
	 	Barker, T., Seregin, G.~and \v Sver\' ak, V., On stability of weak Navier-Stokes solutions with large $L^{3,\I}$ initial data. Comm.~Par.~Diff.~Eq.~ 43 (2018), no. 4, 628-651.
	 	 
	 	\bibitem{Basson} Basson, A., Solutions spatialement homog\'enes adapt\'ees au sens de Caffareli, Kohn et Nirenberg des \'equations de Navier-Stokes, Th\`ese, Universit\'e d'\'Evry, 2006.
 
		\bibitem{B1} Bradshaw, Z.,~Remarks on the separation of Navier-Stokes flows. Nonlinearity 37 (2024), no. 9, Paper No. 095023, 33 pp.
	 	\bibitem{BP1} Bradshaw, Z.~and Phelps, P., Spatial decay of discretely self-similar solutions to the Navier-Stokes equations. Pure Appl. Anal. 5 (2023), no. 2, 377–407.
	 	\bibitem{BP2} Bradshaw, Z.~and Phelps, P., Estimation of non-uniqueness and short-time asymptotic expansions for Navier-Stokes flows, Ann. Inst. H. Poincaré C Anal. Non Linéaire 41 (2024), no. 4, 877–896.
	 	\bibitem{BT1} Bradshaw, Z.~and Tsai, T.-P., Forward discretely self-similar solutions of the Navier–Stokes equations II.
	 	Ann.~Henri Poincar\'e 18 (2017), no.~3, 1095-1119.
	 	\bibitem{BT8} Bradshaw, Z.~and Tsai, T.-P., Global existence, regularity, and uniqueness of infinite energy solutions to the Navier-Stokes equations. Comm.~Partial Differential Equations 45 (2020), no.~9, 1168–1201.
	 	 
	 	\bibitem{BT7} Bradshaw, Z.~and Tsai, T.-P., On the local pressure expansion for the Navier-Stokes equations. J.~Math.~Fluid Mech.~24 (2022), no.~1, Paper No.~3, 32 pp.
	 	\bibitem{BW} Bradshaw, Z.~and Wang, W., Asymptotic stability for the 3D Navier-Stokes equations in 
	 	and nearby spaces.
	 	Proc. Amer. Math. Soc.
	 	Published electronically: June 26, 2025.
	 	\bibitem{Brandolese} Brandolese, L., Fine properties of self-similar solutions of the Navier-Stokes equations. Arch.~Ration.~Mech.~Anal.~192 (2009), no.~3, 375-401.
	  
	 	\bibitem {BV} Brandolese,~L.~and Vigneron, F., New asymptotic profiles of nonstationary solutions of the Navier-Stokes system. J.~Math.~Pure Appl.~88 (2007), 64-86.
	 	\bibitem{BuckmasterVicol} Buckmaster, T.~and Vicol, V., Nonuniqueness of weak solutions to the Navier-Stokes equation. Annals of Math.~189 (2019),no.~1, 101-144. 
	 	\bibitem{CKN} Caffarelli, L., Kohn, R.~and Nirenberg, L., Partial regularity of suitable weak solutions of the Navier-Stokes equations. Comm.~Pure Appl.~Math.~35 (1982), no.~6, 771-831.
	 	\bibitem{Calderon} Calder\'on, C.~P., Existence of weak solutions for the Navier-Stokes equations with initial data in $L^p$. Trans.~Amer.~Math.~Soc.~318 (1990), no.~1, 179-200. 
	 	\bibitem{CoPa}  Coiculescu, M.~and  Palasek, S., Non-Uniqueness of Smooth Solutions of the Navier-Stokes Equations from Critical Data. ArXiv preprint: \url{https://arxiv.org/abs/2503.14699}
	 	\bibitem{DDN} Dao, N.A., D\'iaz, J.I., and Nguyen, Q.-H., Generalized Gagliardo–Nirenberg inequalities using Lorentz spaces, BMO, H\" older spaces and fractional Sobolev spaces. Nonlinear Anal.~173 (2018), 146-153. 
	 	\bibitem{DrivasEtAl}Drivas, T.D., Elgindi, T.M.~and La, J.~Propagation of singularities by Osgood vector fields and for 2D inviscid incompressible fluids. Math.~Ann.~(2022). 
	  
	 	\bibitem{ChaeWolf} Chae, D., and Wolf, J., 
	 	Existence of discretely self-similar solutions to the Navier-Stokes equations for initial value in $L^2_{\mathrm{\loc}}(\R^3)$, Ann.~Inst.~H.~Poincar\'e Anal.~Non Lin\'eaire (2017). 
	 	\bibitem{DL} Dong, H., and Li, D., Optimal local smoothing and analyticity rate estimates for the generalized Navier-Stokes equations. Commun.~Math.~Sci.~7 (2009), no.~1, 67-80.
	 	\bibitem{DongZhang}Dong, H.~and Zhang, Q., Time analyticity for the heat equation and Navier-Stokes equations
	 	J. Funct. Anal. 279 (2020), no. 4, 108563, 15 pp.
	 	\bibitem{FDLR} Fern\'andez-Dalgo, P.~G., and Lemari\'e-Rieusset, P.~G., Weak solutions for Navier-Stokes equations with initial data in weighted $L^2$ spaces, Archive for Rational Mechanics and Analysis 237 (2020). 
	 	\bibitem{FJR} Fabes, E.B., Jones, B.F.~and Riviere, N.M., The initial value problem for the Navier-Stokes Equations with Data in $L^p$. Arch.~Rational Mech.~Anal.~45 (1972), 222–240. 
 
	 \bibitem{GIP} Gallagher, I., Iftimie, D.~and Planchon, F., Asymptotics and stability for global solutions to the Navier-Stokes equations. Ann.~Inst.~Fourier (Grenoble) 53 (2003), no.~5, 1387-1424.
	  
	 \bibitem{GS} Guillod, J.~and \v Sver\' ak, V., Numerical investigations of non-uniqueness for Navier-Stokes initial value problem in borderline spaces. J. Math. Fluid Mech. 25 (2023), no. 3, Paper No. 46, 25 pp.
	 	\bibitem{ESS}  Escauriaza, L., Seregin, G.~A., \v Sver\' ak, V., $L^{3,\I}$-solutions of Navier-Stokes equations and backward uniqueness. (Russian) Uspekhi Mat.~Nauk 58 (2003), no.~2(350), 3--44; translation in Russian Math.~Surveys 58 (2003), no.~2, 211--250.
 
	 	
	 	\bibitem{hopf} Hopf, E.,
	 	{{\"U}ber die {A}nfangswertaufgabe f{\" u}r die
	 		hydrodynamischen {G}rundgleichungen}, Math. Nachr. \textbf{4} (1951),
	 	213--231.
 
	 	\bibitem{JiaSverakIll} Jia, H.~and \v{S}ver\'{a}k, V., Are the incompressible 3D Navier-Stokes equations locally ill-posed in the natural energy space? J.~Funct.~Anal.~Volume 268 (2015), Issue 12, 3734-3766. 
	 	\bibitem{JS} Jia, H.~and \v{S}ver\'{a}k, V., Local-in-space estimates near initial time for weak solutions of the Navier-Stokes equations and forward self similar solutions. Invent.~Math.~196 (2014), no.~1, 233–265.
	 	 
	 	
	 	\bibitem{Kato} Kato, T., Strong $L^{p}$-solutions of the Navier-Stokes equation in ${\R}^{m}$, with applications to weak solutions. Math.~Z.~187 (1984), no.~4, 471–480.

	 	\bibitem{KMT2} Kang, K., Miura, H.~and Tsai, T.-P., An $\epsilon$-regularity criterion and estimates of the regular set for Navier-Stokes flows in terms of initial data. Pure~Appl.~Anal.~3 (2021) 567-59. 
	 	\bibitem{KMT} Kang, K., Miura, H.~and Tsai, T.-P., short-time regularity of Navier-Stokes flows with locally $L^3$ initial data and applications. Int.~Math.~Res.~Not.~2021, no.~11, 8763–8805.
	 	
	 	\bibitem{KS} Kikuchi, N.~and Seregin, G., Weak solutions to the Cauchy problem for the Navier-Stokes equations satisfying the local energy inequality. Nonlinear equations and spectral theory. 141-164, Amer.~Math.~Soc.~Transl.~Ser.~2, 220, Amer.~Math.~Soc., Providence, RI, 2007.
	  
	 	\bibitem{KR} Kukavica, I.~and Ries, E., Asymptotic Expansion for solutions of the Navier-Stokes equations with potential forces. J.~Diff.~Eq.~250 (2011), 607–622.
	 	{\bibitem{Kwon} Kwon, H., The role of the pressure in the regularity theory for the Navier-Stokes equations. J.~Diff.~Eq.~357, (2023), 1-31.}
	 	\bibitem{KwTs}Kwon, H.~and Tsai, T.-P., Global Navier-Stokes flows for non-decaying initial data with slowly decaying oscillation. 
	 	Comm.~Math.~Phys.~(2020), no.~3, 1665-1715. 
	  
	 	\bibitem{LR} Lemari\'e-Rieusset, P.~G., \emph{Recent developments in the Navier-Stokes problem.} Chapman Hall/CRC Research Notes in Mathematics, 431. Chapman Hall/CRC, Boca Raton, FL, 2002.
	 	\bibitem{LR-Morrey} Lemari\'e-Rieusset, P.~G., The Navier-Stokes equations in the critical Morrey-Campanato space. Rev.~Mat.~Iberoam.~23 (2007), no.~3, 897-930.
	  
	 	\bibitem{LR2} Lemari\'e-Rieusset, P.~G., \emph{The Navier-Stokes problem in the 21st century}. CRC Press, Boca Raton, FL, 2016.
	  
	 	\bibitem{leray} Leray, J., Sur le mouvement d'un liquide visqueux emplissant l'espace. Acta Math.~63 (1934), no.~1, 193-248. 
	
	  
	 	\bibitem{MMP2} Maekawa, Y., Miura, H.~and Prange, C., Local energy weak solutions for the Navier-Stokes equations in the half-space, Comm.~Math.~Phys.~367 (2019), no.~2, 517-580. 
	  
	 	\bibitem{MaTe} Maekawa, Y.~and Terasawa, Y., The Navier-Stokes equations with initial data in uniformly local $L^p$ spaces. Differential Integral Equations 19 (2006), no.~4, 369-400. 
	 	\bibitem{NeRuSv} 	Necas, J., Ruzicka, M.~and Sverak, V., On Leray's self-similar solutions of the Navier-Stokes equations.
	 	Acta Math. 176 (1996), no. 2, 283–294.
	 	\bibitem{ONeil} O’Neil, R., Convolution operators and $L(p,q)$ spaces, Duke Mathematical Journal, Duke Math. J. 30(1), 129-142, (March 1963) 
	 	\bibitem{Oseen} Oseen, C. W., ``Neuere Methoden und Ergebnisse in der Hydrodynamik,'' Akademische Verlags-gesellschaft, Leipzig, 1927.
	 		\bibitem{Pa} Palasek, S., Non-uniqueness in the Leray-Hopf class for a dyadic Navier-Stokes model. ArXiv preprint:    \url{https://arxiv.org/abs/2407.06179}
	 	\bibitem{PDS} Palmer, T., D\"oring, A.,~and Seregin, G. The real butterfly effect. Nonlinearity, 27  (2014) (9), R123-R141.
	 	\bibitem{Popkin} Popkin, H., On Rough Calderón Solutions to the Navier–Stokes Equations and Applications to the Singular Set. J. Math. Fluid Mech. 27, 25 (2025). 
	 	\bibitem{VS76b}Scheffer, V.,
	 	\newblock Turbulence and {H}ausdorff dimension.
	 	\newblock In {\em Turbulence and {N}avier-{S}tokes equations ({P}roc. {C}onf.,
	 		{U}niv.~{P}aris-{S}ud, {O}rsay, 1975)}, pages 174--183. Lecture Notes in
	 	Math., Vol.~565.~Springer, Berlin, 1976.
	  
	 	\bibitem {SeSv} Seregin, G.~and \v Sver\' ak, V., On global weak solutions to the Cauchy problem for the Navier-Stokes equations with large $L^3$-initial data. Nonlinear Anal.~154 (2017), 269-296. 
	 	\bibitem{Serrin}Serrin, J., On the interior regularity of weak solutions of the Navier-Stokes equations. Arch. Rational Mech. Anal. 9, 187–195 (1962).
	 	\bibitem{ShSh} Shibata, Y. and Shimizu, S., A decay property of the Fourier transform and its application to the Stokes problem. J. Math. Fluid Mech. 3 (2001), no. 3, 213-230.
	 	\bibitem{VAS} Solonnikov, V.~A., Estimates for solutions of a non-stationary linearized system of
	 	Navier-Stokes equations. (Russian) Trudy Mat.~Inst.~Steklov.~70 (1964) 213–317.
	 	
	 	\bibitem{Tao} Tao, T.,
	 	Finite time blowup for an averaged three-dimensional Navier-Stokes equation
	 	J. Amer. Math. Soc. 29 (2016), no. 3, 601-674.
	 	\bibitem{TBM} Thalabard, S., Bec, J.~and Mailybaev, A.A., From the butterfly effect to spontaneous stochasticity in singular shear flows. Commun. Phys 3, 122 (2020).
	 	\bibitem{Tsai-DSSI}Tsai, T.-P., Forward discretely self-similar solutions of the Navier-Stokes equations. Comm.~Math.~Phys.~328 (2014), no.~1, 29-44.
	 	\bibitem{Tsai-book}Tsai, T.-P., \emph{Lectures on Navier-Stokes Equations}.
	 	Graduate Studies in Mathematics, 192. American Mathematical Society, Providence, RI, 2018.
	 \bibitem{Tsai1}Tsai, T.-P., On Leray's self-similar solutions of the Navier-Stokes equations satisfying local energy estimates.
	 Arch. Rational Mech. Anal. 143 (1998), no. 1, 29–51.
	 	\bibitem{VasseurEtAl} Vasseur, A.~and Yang, M.~Boundary vorticity estimates for Navier-Stokes and application to the inviscid limit, SIAM J. Math. Anal. 55 (2023), no. 4, 3081–3107.
	 		\bibitem{VasseurEtAl2} Vasseur, A.~and Yang, J., 	 Layer separation of the 3D incompressible Navier-Stokes equation in a bounded domain,
	 	Comm. Partial Differential Equations 49 (2024), no. 4, 381–409.
	 	\bibitem{yamazaki} Yamazaki, M., The Navier-Stokes equations in the weak-$L^n$ space with time-dependent external force. Math.~Ann.~317 (2000), 635-675.
	 \end{thebibliography}
\end{document}